\newtheorem*{theorem*}{Theorem 1}
\newtheorem*{theorem**}{Theorem 2}
\newtheorem{lemma}{Lemma}[section]
\newtheorem{remark}{Remark}
\newtheorem{cor}{Corollary}[section]
\numberwithin{equation}{section}
\title{Simple closed geodesics on regular tetrahedra in Lobachevsky space}
 \author{Alexander ~A.~Borisenko, ~Darya ~D.~Sukhorebska}
\begin{document}
\date{}
\maketitle

{\it Abstract}.
We obtained a complete classification of simple closed geodesics on regular tetrahedra in   Lobachevsky space.
Also, we evaluated the number of simple closed geodesics of length not greater than $L$ and 
found the asymptotic of this number as $L$ goes to infinity.

{\it Keywords}:
 closed geodesics,  simple geodesics, regular tetrahedron, Lobachevsky space, hyperbolic space.
{\it MSC}: 53С22, 52B10

\section{Introduction}\label{s1}

A closed geodesic is called simple if this geodesic is not self-intersecting and does not go along itself.
In 1905 Poincare proposed the conjecture on the existence of three simple closed 
geodesics on a smooth convex surface in three-dimensional Euclidean space.
In 1917  J. Birkhoff proved that 
there exists at least one simple closed geodesic
 on a Riemannian manifold that is homeomorphic to a sphere of arbitrary dimension 
 \cite{Birk}.
In 1929 L. Lyusternik and L.  Shnirelman obtained that there exist
at least three simple closed geodesics 
on a compact  simply-connected 
two-dimensional Riemannian manifold 
 \cite{LustShnir}, 
\cite{LustShnir47}.
But the   proof by Lyusternik and Shnirelman contains substantial gaps.
 I. A. Taimanov  gives a complete proof of the theorem
 that on each smooth Riemannian manifold homeomorphic to the  two-dimentional sphere
  there exist at least three distinct simple closed geodesics \cite{Tai92}.

In 1951 L. Lyusternik and A.  Fet	 stated that there exists at least one closed geodesic 
  on any compact Riemannian manifold  \cite{Fet51}. 
In 1965 Fet improved this results. 
He proved that there exist at least two closed geodesics  on a compact Riemannian manifold
under the assumption that all closed geodesics  are non-degenerate   \cite{Fet65}. 
V. Klingenberg generalized this theorem.
 He showed that on compact Riemannian manifolds with a finite fundamental group
 there exist infinitely many  closed geodesics \cite {Kling78}.

Due to works of  Huber \cite{Huber59}, \cite{Huber61} 
 it is known that 
  on  complete closed two-dimensional   surfaces  of constant negative curvature 
 the number of closed geodesics of length   not greater than $L$ 
 is of order $e^{L}/L$ as $L \rightarrow +\infty$.
 Ya. Sinai \cite{Sin66} and   G. Margulis \cite{Marg69} and others 
 generalized   this estimation for
  compact $ n $-dimensional manifolds of non-constant negative curvature.
Igor Rivin studied the growth of $N(L)$, the number of simple closed
geodesics of length bounded above by $L$ on a hyperbolic surface of
genus $g$ with $n$ points at infinity.
He proved \cite{Rivin} that    there exist constants $c_1$ and $c_2$  such that
 \begin{equation}\label{Rivin}
 c_1 L^{6g-6+2n}\le N(L) \le c_2 L^{6g-6+2n}.
 \end{equation}
 M. Mirzakhani \cite {Mirz08} showed that the constants $ c_1 $ and $ c_2 $ are equal and found its  value.

Geodesics on the non-smooth surfaces   are also investigated.
A. Cotton and others   described all simple closed geodesics 
on a cube, regular tetrahedron, regular octahedron   and rectangular prism \cite{Cot05}.
K. Lawson and others obtain a complete classification of  simple closed geodesics on   eight
convex deltahedra such that the faces of these deltahedra are regular triangles  \cite{Law10}.

D. Fuchs and K. Fuchs supplemented and systematized results about closed geodesics on a regular polyhedra 
in three-dimensional Euclidean space \cite{FucFuc07}, \cite{Fuc09}. 
V. Protasov  obtained  сonditions for the existence of closed geodesics on an arbitrary simplex
and gave the estimate  for the number 
of closed geodesics on a simplex  depending on 
the largest deviation from $ \pi $ of the sum of the plane angles at a vertex of the simplex \cite{Pro07}.

The aim of this paper is to describe all simple closed geodesics on   regular tetrahedra 
 in   three-dimensional  Lobachevsky space.
In Euclidean space since the  Gaussian curvature of   faces of a tetrahedron   is equal to zero,
 it follows that the   curvature  of the tetrahedron is  concentrated into its vertices.
In  Lobachevsky space  the   faces of a tetrahedron  have a constant  negative curvature.
 Thus the  curvature  of such tetrahedron defined not only by its vertices but also by its faces.
Furthermore, all regular tetrahedra in Euclidean space are homothetic to each others.
In   Lobachevsky space  the intrinsic geometry of a regular tetrahedron 
depends on the plane angles of the faces of this tetrahedron.
Regular euclidean triangles form a regular tiling of the euclidean plane.
 From this  it is easy to proof  
 the full  classification of  closed geodesics on  regular tetrahedrons in Euclidean space.
In general it is impossible to make a triangular tiling of the Lobachevsky plane by regular triangles.

It is true 

\begin{theorem*}
On a regular tetrahedron in Lobachevsky space for any coprime integers $(p, q)$, $0 \le p<q$, 
there exists  unique, up to the rigid motion of the tetrahedron, simple closed geodesic  of type $(p,q)$.
The geodesics of type $(p,q)$ exhaust all simple closed geodesics on a regular tetrahedron in  Lobachevsky space.
\end{theorem*}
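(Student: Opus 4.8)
\emph{Proof plan.} I would prove the statement in three stages: (i) a Gauss--Bonnet reduction forcing the combinatorial shape of any simple closed geodesic, (ii) an existence argument via the development of the tetrahedron onto the hyperbolic plane, and (iii) a uniqueness argument resting on the uniqueness of the axis of a hyperbolic isometry. For (i), note first that at every vertex of a regular tetrahedron $T$ in Lobachevsky space the total (cone) angle equals $3\alpha$, where $\alpha<\pi/3$ is the angle of a face; since this is $<\pi$, no geodesic can pass through a vertex, so every closed geodesic $\gamma$ lies in the union of the open faces and the edges, a locus on which the intrinsic metric is smooth of curvature $-1$ (two hyperbolic half-planes glued along a geodesic edge), and there $\gamma$ is a genuine smooth locally length-minimizing curve. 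If $\gamma$ is simple it bounds two discs $D_1,D_2$ on the sphere; applying Gauss--Bonnet to $D_i$, with $k_g\equiv 0$ along $\gamma$, gives
\[
 n_i\,(2\pi-3\alpha)=2\pi+\operatorname{Area}(D_i),\qquad n_1+n_2=4,
\]
where $n_i$ is the number of vertices contained in $D_i$. As $\operatorname{Area}(D_i)>0$, the right side exceeds $2\pi$; if $D_i$ contained at most one vertex the left side would be at most $2\pi-3\alpha<2\pi$, a contradiction. Hence $n_i\ge 2$ for both $i$, and since $n_1+n_2=4$ we get $n_1=n_2=2$: a simple closed geodesic separates the four vertices into two pairs, i.e.\ it ``encircles'' one of the three pairs of opposite edges of $T$.

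\emph{Completeness of the list.} Fixing that pair of opposite edges and following $\gamma$, I would read off the cyclic word of edges crossed by $\gamma$. On each face $\gamma$ enters and leaves through two \emph{different} edges (a geodesic meets the geodesic carrying an edge in at most one point, and vertices are excluded), so at each step there are exactly two possibilities; the $2{+}2$ vertex splitting, together with the requirement that the word close up, restricts it --- exactly as in the Euclidean regular tetrahedron --- to the one-parameter family indexed by a coprime pair $(p,q)$, $0\le p<q$, which is the \emph{type} (here $p$ and $q$ count the crossings of the two distinguished families of edges). This establishes that the geodesics of type $(p,q)$ exhaust all simple closed geodesics.

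\emph{Existence and uniqueness for a fixed type.} Fix coprime $(p,q)$. The type prescribes a finite chain of faces $F_0,F_1,\dots,F_N$, consecutive ones meeting along the edge to be crossed; develop this chain isometrically into $\mathbb{H}^2$, so that $F_N=\psi(F_0)$, where $\psi\in\operatorname{Isom}^+(\mathbb{H}^2)$ is the composition of the edge-identifications along the chain (the ``combinatorial holonomy'' of the loop). A closed geodesic of type $(p,q)$ on $T$ is precisely a complete geodesic line $\ell\subset\mathbb{H}^2$ with $\psi(\ell)=\ell$ whose portion in the developed chain crosses each prescribed edge exactly once and in order; such a line exists and is unique exactly when $\psi$ is hyperbolic, $\ell$ being its axis. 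So the crux is to show that $\psi$ is hyperbolic and that its axis threads the developed chain in the prescribed pattern. I would do this by induction on $N$: because $\alpha<\pi/3$ the developed triangles are ``thin'' and the chain ``fans out'', and convexity of the distance function in curvature $-1$ keeps a geodesic that enters $F_0$ through the correct edge, aimed as in the Euclidean model, inside the chain and crossing each successive edge once; since the chain runs off to infinity, $\psi$ can be neither elliptic nor parabolic, hence is a translation. Uniqueness within the type is then automatic (the axis of a hyperbolic isometry is unique), two developments of the same type differ by an element of the symmetry group of $T$ (which accounts for the ``up to rigid motion'' clause), and the resulting curve on $T$ has no self-intersection because its lift is an embedded arc meeting each edge of the chain at most once.

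\emph{Main obstacle.} The essential difficulty, and the place where the hyperbolic case genuinely departs from the Euclidean one, is the existence stage: in Euclidean space the chain of faces embeds into the periodic tiling of the plane by equilateral triangles, so the candidate geodesic and the holonomy are read off at a glance, whereas in $\mathbb{H}^2$ there is no such tiling. One must instead control the cumulative ``bending'' of the developed chain directly --- showing it is spread out enough that the candidate geodesic neither escapes it nor re-crosses an edge, and that $\psi$ is a genuine translation rather than a rotation --- and this estimate, uniform as $(p,q)\to\infty$, is the technical heart of the proof; the type $(0,1)$ (the classical ``equator'' through the midpoints of a cycle of four edges) should be handled separately as the base case.
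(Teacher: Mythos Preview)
Your outline takes a genuinely different route from the paper. The paper never invokes the holonomy isometry or its axis; instead it (a) transfers the combinatorics from the Euclidean case via the Cayley--Klein model (any simple closed geodesic on the hyperbolic tetrahedron is a ``generalized geodesic'' on a Euclidean regular tetrahedron, hence by Protasov's lemma equivalent to some Euclidean geodesic of type $(p,q)$), (b) proves a midpoint lemma --- every simple closed geodesic on the hyperbolic tetrahedron passes through the midpoints of two pairs of opposite edges --- which makes the development centrally symmetric and reduces closure to an automatic angle-matching, and (c) for existence constructs the hyperbolic development $T_1$ with the same face-order as the Euclidean one, draws the chord between the two midpoints, and shows it stays inside $T_1$: for $\alpha\le\pi/4$ the polygon is convex, and for larger $\alpha$ one runs a continuity argument whose engine is an \emph{a priori} lower bound on the distance from any simple closed geodesic to the vertices (their Lemma~3.2/Corollary~3.1), which is uniform in $(p,q)$ and depends only on $\alpha$. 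Uniqueness is a short Gauss--Bonnet computation on the quadrilateral between two equivalent geodesics, which is the same content as your ``unique axis'' remark. Your Gauss--Bonnet $2{+}2$ splitting is a clean observation not made explicit in the paper, and your framing via the holonomy is conceptually appealing.

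The genuine gap in your proposal is exactly the point you flag as the main obstacle: you do not actually prove that the axis of $\psi$ threads the developed chain. ``Induction on $N$'', ``thinness'', and ``aimed as in the Euclidean model'' are not an argument --- the chain is non-convex once $\alpha>\pi/4$, there is no ambient tiling to compare with, and nothing you have written excludes the axis from exiting the chain near a reflex vertex and re-entering, or from crossing an edge of the chain twice. What makes the paper's proof go through is precisely the missing ingredient: a quantitative, $(p,q)$-independent lower bound on the distance from the candidate geodesic to every vertex of the development (equivalently, of the tetrahedron). With that bound in hand, the only way the chord could leave the development is by passing arbitrarily close to a vertex, which is forbidden, and the continuity argument closes. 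Without it, your existence step is incomplete. Your simplicity claim at the end is also too quick: the lift meeting each edge of the chain once does not by itself prevent two passes through the \emph{same face} of $T$ from intersecting; one needs to argue (as the paper does implicitly via equivalence to the Euclidean geodesic) that the order of intersection points along each edge is preserved, whence any two segments in a common face have separated endpoints and cannot cross.
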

The  simple closed geodesic  of type $(p,q)$ has
$p$ points   on   each of two opposite edges of the tetrahedron,
$q$ points   on   each of another two opposite edges,
and there are $(p+q)$ points   on    each edges of the third pair of opposite edges.

\begin{theorem**}
Let $N(L, \alpha) $ be a number of simple closed geodesics of length  not greater than $L$ 
on a regular tetraedron  with  plane angles of the faces equal to $\alpha$ in Lobachevsky space.
Then there exists a function  $c(\alpha) $ such that 
\begin{equation}\label{NLalpha}
 N(L, \alpha) =
c(\alpha) L^2 +O(L\ln L), \notag
\end{equation}
where $O(L\ln L) \le CL\ln L$ as $L \rightarrow +\infty$, 
 $c(\alpha)>0$ when $0<\alpha<\frac{\pi}{3}$ and
\begin{equation}\label{cpi3}
\lim_{\alpha \rightarrow \frac{\pi}{3}} c(\alpha) = +\infty;
\; \; \; \; 
\lim_{\alpha \rightarrow 0} c(\alpha) = \frac{27}{32  (\ln 3)^2 \pi^2 }. \notag
\end{equation}
\end{theorem**}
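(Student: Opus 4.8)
The plan is to combine Theorem~1 with a sharp asymptotic for the length of the geodesic of type $(p,q)$ and then an elementary lattice count. Fix $\alpha\in(0,\pi/3)$, let $T_\alpha$ be the regular tetrahedron with face angle $\alpha$, and write $L_{p,q}(\alpha)$ for the length of its simple closed geodesic of type $(p,q)$. By Theorem~1 these geodesics are indexed bijectively by the coprime pairs $(p,q)$ with $0\le p<q$, so
\[
 N(L,\alpha)=\#\bigl\{(p,q):\ \gcd(p,q)=1,\ 0\le p<q,\ L_{p,q}(\alpha)\le L\bigr\},
\]
and everything hinges on understanding $L_{p,q}(\alpha)$.

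To estimate $L_{p,q}(\alpha)$ I would develop the cyclic chain of faces crossed by the geodesic of type $(p,q)$ onto the hyperbolic plane $\mathbb H^2$. Each face of $T_\alpha$ is a regular hyperbolic triangle of side $a(\alpha)$ with $\cosh a(\alpha)=\cos\alpha/(1-\cos\alpha)$; under the developing map the geodesic becomes a geodesic segment of $\mathbb H^2$ from a point $x_0$ to $\gamma_{p,q}(x_0)$, where $\gamma_{p,q}\in\mathrm{PSL}(2,\mathbb R)$ is the holonomy of the closed geodesic. Since the geodesic is closed, $\gamma_{p,q}$ is hyperbolic and $L_{p,q}(\alpha)$ is its translation length $2\operatorname{arccosh}\bigl(\tfrac12|\operatorname{tr}\gamma_{p,q}|\bigr)$. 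The crossing pattern of a type-$(p,q)$ geodesic ($p$, $q$ and $p+q$ points on the three pairs of opposite edges, hence $4(p+q)$ crossings in all) presents $\gamma_{p,q}$ as a product of $O(p+q)$ elementary isometries, assembled from two basic blocks that occur $p$ and $q$ times and are interleaved in the Sturmian pattern of $p/q$; the symmetry of $T_\alpha$ makes the two blocks conjugate with consistently oriented axes. A ping-pong estimate for such words then gives, uniformly in $p/q$,
\[
 L_{p,q}(\alpha)=g(\alpha)\,(p+q)+O_\alpha\bigl(\ln(p+q)\bigr)
\]
for a continuous positive function $g$ on $(0,\pi/3)$; equivalently $\tfrac12|\operatorname{tr}\gamma_{p,q}|\asymp\lambda(\alpha)^{\,p+q}$ with $\lambda(\alpha)=e^{g(\alpha)/2}>1$.

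Granting this, the count is routine. The condition $L_{p,q}(\alpha)\le L$ reads $p+q\le L/g(\alpha)+O(\ln L)$; the number of coprime pairs $(p,q)$ with $0\le p<q$ in the triangle $p+q\le M$ equals $\tfrac{3}{2\pi^2}M^2+O(M\ln M)$ (M\"obius summation of Euler's totient over the triangle), and the $O(\ln L)$ slack in the bounding line contributes at most $O(L\ln L)$ further pairs. Hence
\[
 N(L,\alpha)=\frac{3}{2\pi^2 g(\alpha)^2}\,L^2+O(L\ln L),\qquad c(\alpha):=\frac{3}{2\pi^2 g(\alpha)^2}>0 .
\]
For the limits: as $\alpha\to\pi/3$ one has $a(\alpha)\to0$ and $T_\alpha$ shrinks to a point, so $L_{p,q}(\alpha)=O\bigl(a(\alpha)(p+q)\bigr)$, forcing $g(\alpha)\to0$ and $c(\alpha)\to+\infty$. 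As $\alpha\to0$ the faces converge to ideal triangles, $a(\alpha)\to+\infty$, and a limiting computation of the holonomy (for the ideal regular tetrahedron, i.e. inside the reflection group of the ideal triangle) gives $\lambda(\alpha)\to3^{2/3}$, so $g(\alpha)\to\tfrac43\ln3$ and $\lim_{\alpha\to0}c(\alpha)=\dfrac{3}{2\pi^2(\frac43\ln3)^2}=\dfrac{27}{32(\ln3)^2\pi^2}$.

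The main obstacle is the uniform length estimate: one must control the trace of a holonomy word of length proportional to $p+q$ sharply enough to extract the exact coefficient $g(\alpha)$ with only an $O(\ln(p+q))$ error, even though the word is an arbitrary balanced interleaving of the two blocks, so that the growth rate depends on $p+q$ alone and not on the frequency $p/q$; this is precisely where the special (conjugate, aligned-axis) geometry coming from the tetrahedron's symmetry must be used. A secondary difficulty is the limit $\alpha\to0$, where the blocks degenerate towards parabolics and one has to track $\ln|\operatorname{tr}\gamma_{p,q}|$ carefully in order to obtain the value $3^{2/3}$.
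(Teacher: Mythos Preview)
Your route diverges from the paper's at the length estimate; the coprime-pair count is identical (the paper's Lemma~7.3 is precisely your totient computation $\psi(x)=\tfrac{3}{2\pi^2}x^2+O(x\ln x)$ for the number of coprime $(p,q)$ with $p<q$ and $p+q\le x$).

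For $L_{p,q}(\alpha)$ the paper does not touch holonomy or traces. It uses instead Protasov's ``catching point'' structure (Lemma~7.1 here): the two arcs of $\gamma$ leaving a catching point traverse the same sequence of $2p+2q-2$ faces, and in any three consecutive faces each arc crosses a pair of opposite edges. The paper then bounds each such three-face block from below by elementary hyperbolic trigonometry---the vertex--distance bound of Lemma~3.2 together with the explicit minimum distance between opposite edges of the rhombus formed by two adjacent faces---obtaining $L_{p,q}(\alpha)\ge g_1(\alpha)(p+q)+O(1)$ with an explicit $g_1(\alpha)$. Feeding that inequality into the coprime count produces the stated $c(\alpha)$ and its limits as $\alpha\to 0$ and $\alpha\to\pi/3$.

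What each approach buys: the paper's argument is entirely elementary and all constants are explicit, whereas your holonomy approach is more conceptual but hinges on showing that the exponential growth rate of $|\operatorname{tr}\gamma_{p,q}|$ depends only on $p+q$ and not on $p/q$. That is \emph{not} a consequence of ping-pong, which controls growth in terms of the full word rather than just its length; your assertion that the two generating blocks are ``conjugate with aligned axes'' would itself need a proof from the tetrahedron's symmetry, and without it the Sturmian interleaving could a~priori produce a leading term depending on $p/q$. Note, however, that the paper's segment-by-segment method yields only the \emph{lower} bound $L_{p,q}\ge g_1(\alpha)(p+q)+O(1)$, hence strictly only $N(L,\alpha)\le c(\alpha)L^2+O(L\ln L)$; so the difficulty you flag---extracting a single leading coefficient independent of $p/q$ for a genuine two-sided asymptotic---is not in fact resolved by the paper's argument either.
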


\section{Definitions}\label{construction}

A curve $\gamma$ is called a {\it geodesic} if any sufficiently small subarc of $\gamma$ 
realizes a shortest path between endpoints of this subarc.
On a convex polyhedron a geodesic satisfies the following properties \cite {Alek50}: \\
1) within every faces the geodesic is a straight line segment;\\
2) the geodesic traverses an interior point of an edge “without refraction”, 
i.e., the angles that it forms with the edge are equal from the two sides;\\
3) the geodesic does not pass through a vertex of a convex polyhedron.
 
\begin{remark}
By a straight line segment we mean a segment of a geodesic in the space of constant curvature.
\end{remark}

Assume a geodesic on a convex polyhedron starts at a point $X$ of an edge.
Then it goes into a face of the polyhedron,
intersects another edge of the same face at a point $Y$ and then passes to the next face, and so on.
Draw these faces in the plane along the geodesic.
In this way we obtain the polygon that is called the {\it development} of a polyhedron along the geodesic.
The geodesic becomes a straight line on it.
If a geodesic is closed, then this straight line arrives to the initial edge at the point $X'$ at the same position as $X$.

We assume that the  Gaussian curvature of    {\it  Lobachevsky space} ({\it hyperbolic space})  is equal to $-1$.
 By a {\it  regular tetrahedron} in   three-dimensional  Lobachevsky space 
we understand a closed convex polyhedron  
formed by four regular triangles.
These triangles are called {\it faces} and they  form   regular trihedral angles 
in the vertices of the tetrahedron.
Note that the plane angle $\alpha $ of the face  satisfies the inequality $0< \alpha<\frac{\pi}{3}$.
Up to a rigid motion in Lobachevsky space there exists a unique tetrahedron with  a given plane angle of its face.
The  length $a$ of every its edge is equal to
\begin{equation}\label{a}
a=\text{arcosh} \left(  \frac{\cos\alpha}{1-\cos\alpha}  \right).
\end{equation}
The following auxiliary formulas  follows from (\ref{a}):
\begin{equation}\label{tha}
 \tanh a=\frac{  \sqrt{2\cos \alpha -1}  }{ \cos\alpha };
\end{equation}
\begin{equation}\label{cha2}
 \cosh \frac{a}{2}=\frac{1}{ 2\sin \frac{\alpha}{2}  };
\end{equation}
\begin{equation}\label{sha2}
\sinh  \frac{a}{2}=\frac{ \sqrt{2\cos \alpha -1} } { 2\sin \frac{\alpha}{2}  }.
\end{equation}

Consider  the  Cayley–Klein model of   hyperbolic space. 
In this model points are represented by the points in the interior of the unit ball
\cite{Cannon97}.
Geodesics in this model are the chords of the ball.
Assume that the center  of the circumscribed sphere of a regular tetrahedron coincides 
with the center of the model.
Then the regular tetrahedron in   Lobachevsky space is represented by a regular tetrahedron in   Euclidean space.

\section{Necessary conditions for the simplicity of a geodesic  on a regular tetrahedron  in Lobachevsky space}

We start this section with formulation some useful lemmas.

\begin{lemma}\label{convexdist} \textnormal { \cite{Burag94} }.
Let $M$ be a complete simply connected Riemannian manifold of nonpositive curvature. 
Let $\gamma_1(s): [0, 1]\rightarrow M$ and  $\gamma_2(s):[0, 1]\rightarrow M$ be geodesics.
Let  $\sigma_s(t):[0,1]\rightarrow M$  be a geodesic such that  $\sigma_s(0)=\gamma_1(s)$, $\sigma_s(1)=\gamma_2(s)$, and
let $\rho(s)$ be the length of $\sigma_s$.
Then $\rho(s)$ is a convex function.
\end{lemma}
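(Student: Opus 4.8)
The final statement to be proven is Lemma~\ref{convexdist}, the convexity of the distance function between two geodesics in a Hadamard manifold (complete, simply connected, nonpositive curvature). I would prove this as an application of the second variation formula for arc length, together with the Rauch comparison / Jacobi field estimates that are characteristic of nonpositive curvature.

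First I would set up the variation. For each fixed $s$, the curve $t\mapsto\sigma_s(t)$ is a geodesic joining $\gamma_1(s)$ to $\gamma_2(s)$; allowing $s$ to vary gives a two-parameter family $\sigma:[0,1]\times[0,1]\to M$, $\sigma(s,t)=\sigma_s(t)$, which is a (smooth) variation through geodesics of the initial geodesic $\sigma_0$. The variation field $J_s(t)=\partial_s\sigma(s,t)$ along each $\sigma_s$ is a Jacobi field, since it is the variation field of a family of geodesics. Its boundary values are $J_s(0)=\gamma_1'(s)$ and $J_s(1)=\gamma_2'(s)$, the velocity vectors of the two given geodesics. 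I would then compute $\rho(s)=\int_0^1\lvert\partial_t\sigma(s,t)\rvert\,dt$ and differentiate twice in $s$.

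The key computation is the second variation formula. Differentiating the energy or the length, one obtains that $\rho''(s)$ is, up to terms that vanish because $\gamma_1,\gamma_2,\sigma_s$ are all geodesics (so the relevant acceleration terms drop out and the endpoint-curvature boundary terms involve $\nabla_{\partial_s}\partial_s\sigma$ at $t=0,1$, which vanish since $\gamma_1,\gamma_2$ are geodesics), controlled by the index form
\[
\rho''(s)\ \ge\ \frac{1}{\rho(s)}\int_0^1\Bigl(\lvert\nabla_t J_s^\perp\rvert^2-\langle R(J_s^\perp,T)T,\,J_s^\perp\rangle\Bigr)\,dt,
\]
where $T=\partial_t\sigma/\lvert\partial_t\sigma\rvert$ and $J_s^\perp$ is the component of $J_s$ orthogonal to $T$ (the tangential component contributes a manifestly nonnegative convex term, or can be normalized away by reparametrizing $\sigma_s$ proportionally to arc length). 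Since the sectional curvature is nonpositive, $\langle R(J^\perp,T)T,J^\perp\rangle\le 0$, so the integrand is $\ge 0$, hence $\rho''(s)\ge 0$ and $\rho$ is convex. I would need to be careful about the case $\rho(s)=0$ (the two geodesics meet), handled by a limiting argument or by noting convexity is a local statement on the open set where $\rho>0$ together with continuity.

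The main obstacle is bookkeeping rather than conceptual: writing the second variation of \emph{length} (not energy) cleanly, correctly identifying which boundary terms survive, and justifying that one may assume $\sigma_s$ parametrized proportionally to arc length so that the tangential part of $J_s$ causes no trouble. Since the paper cites \cite{Burag94} for this lemma, in practice I would simply invoke that reference; but the self-contained argument above — second variation formula plus the sign of the curvature term — is the standard route, and it is exactly the mechanism (geodesics in nonpositive curvature spread apart convexly) that the rest of the paper will exploit when analyzing developments of geodesics on the tetrahedron in the Cayley–Klein model.
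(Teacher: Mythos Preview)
The paper does not supply its own proof of this lemma: it is stated with the citation \cite{Burag94} and used as a black box. Your sketch via the second variation formula and the sign of the curvature term in the index form is the standard argument and is correct in outline; in particular, your observation that the boundary terms $\langle\nabla_{\partial_s}\partial_s\sigma,T\rangle\big|_{t=0}^{t=1}$ vanish because $\gamma_1,\gamma_2$ are geodesics is the right reason those terms drop out. Since the paper offers nothing to compare against, there is no divergence of approach to discuss; for the purposes of this paper you may simply cite the reference as the authors do.
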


\begin{cor} \label{convexdistcor}\textnormal { \cite{Burag94} }. 
Let  $\gamma(s)$  be a  geodesic in a complete simply connected Riemannian manifold $M$ of nonpositive curvature.
Then the function $\rho(s)$ of a distance
from the fixed point $p$ on  $M$ to the points on $\gamma(s)$ is a convex function.
\end{cor}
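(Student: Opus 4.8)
The plan is to obtain Corollary~\ref{convexdistcor} as a direct specialization of Lemma~\ref{convexdist}, taking one of the two geodesics to be the constant curve at the point $p$. Since $M$ is complete, simply connected and of nonpositive curvature, the Cartan--Hadamard theorem guarantees that $\exp_p\colon T_pM\to M$ is a diffeomorphism; in particular, any two points of $M$ are joined by a unique geodesic, and this geodesic depends smoothly on its endpoints. Hence, for each $s$, there is a well-defined geodesic $\sigma_s\colon[0,1]\to M$, parametrized proportionally to arc length, with $\sigma_s(0)=p$ and $\sigma_s(1)=\gamma(s)$, and the family $\{\sigma_s\}$ varies smoothly in $s$.

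Next I would apply Lemma~\ref{convexdist} with $\gamma_1(s)\equiv p$ and $\gamma_2(s)=\gamma(s)$. The constant curve $\gamma_1$ is a (degenerate) geodesic, so the hypotheses of the lemma are met, and the connecting geodesics appearing in the lemma are precisely the $\sigma_s$ constructed above. The length of $\sigma_s$ equals the distance $d(p,\gamma(s))=\rho(s)$, because $\sigma_s$ is the unique, hence minimizing, geodesic from $p$ to $\gamma(s)$. Lemma~\ref{convexdist} then yields that $\rho(s)$ is a convex function of $s$, which is exactly the assertion of the corollary.

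The only point requiring a little care is whether Lemma~\ref{convexdist} may be invoked when $\gamma_1$ is constant rather than a nonconstant geodesic. This is harmless: a single point is a trivial geodesic, and the argument behind Lemma~\ref{convexdist} (second variation of energy, nonpositivity of the curvature, and convexity of length along the ruled variation $(s,t)\mapsto\sigma_s(t)$) goes through verbatim in this case; near a parameter $s_0$ with $\gamma(s_0)=p$ one uses in addition the continuity of $\rho$ and that the distance function from a point in a Hadamard manifold is convex across its single non-smooth point. Alternatively, one may replace $\gamma_1$ by a short nonconstant geodesic through $p$, apply the lemma, and pass to the limit, using that a pointwise limit of convex functions is convex. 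I expect this degenerate-case bookkeeping, together with the smooth dependence of $\sigma_s$ on $s$ supplied by Cartan--Hadamard, to be the only genuine issue; everything else is immediate from Lemma~\ref{convexdist}.
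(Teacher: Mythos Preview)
Your proposal is correct and is the standard way to derive this corollary from Lemma~\ref{convexdist}: specialize one of the two geodesics to the constant curve at $p$, and note that in a Hadamard manifold the unique connecting geodesic $\sigma_s$ has length exactly $d(p,\gamma(s))$. The paper does not supply its own proof of this statement---it simply quotes it (together with Lemma~\ref{convexdist}) from \cite{Burag94}---so there is nothing to compare against beyond observing that your argument is precisely the expected one.
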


\begin{lemma}\textnormal{\cite{Pras04}}.
Let $ABC$ be a right triangle $\left(\angle C = \frac{\pi}{2} \right)$   in  Lobachevsky space.
Then
\end{lemma}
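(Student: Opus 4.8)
The statement to be proved is the standard hyperbolic trigonometry of a right triangle with $\angle C=\frac{\pi}{2}$: the relations analogous to the Euclidean ones, namely
\[
\cosh c=\cosh a\cosh b,\qquad \sinh a=\sinh c\sin A,\qquad \tanh b=\tanh c\cos A,\qquad \tanh a=\sinh b\tan A,
\]
together with the companion formulas obtained by swapping $a\leftrightarrow b$, $A\leftrightarrow B$, and the ``angle-only'' relation $\cos A=\cosh a\sin B$. My plan is to derive all of them from one master identity and then read off the rest, rather than proving each independently.

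First I would place the triangle in a convenient model. The cleanest route is the hyperboloid (Minkowski) model: realize $\mathbb{H}^3$ (or just the hyperbolic plane containing the triangle) as $\{x:\langle x,x\rangle=-1,\ x_0>0\}$ with the Lorentzian form $\langle x,y\rangle=-x_0y_0+x_1y_1+x_2y_2$, so that the distance $d(P,Q)$ satisfies $\cosh d(P,Q)=-\langle P,Q\rangle$, and the angle at a vertex is recovered from the inner product of the unit tangent vectors there. Put $C$ at the basepoint $(1,0,0)$ with the two legs running along the $x_1$- and $x_2$-directions, which is exactly where $\angle C=\frac{\pi}{2}$ forces the legs to be orthogonal. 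Then $A=(\cosh b,\sinh b,0)$ and $B=(\cosh a,0,\sinh a)$ up to relabeling, and computing $-\langle A,B\rangle$ gives immediately $\cosh c=\cosh a\cosh b$, the hyperbolic Pythagorean theorem. This is the master identity.

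From there the remaining formulas follow by elementary manipulation. To get $\sinh a=\sinh c\sin A$ and $\tanh b=\tanh c\cos A$ I would compute the unit tangent vectors at $A$ along the two sides $AC$ and $AB$ — differentiate the geodesics through $A$ at their endpoints, normalize in the Lorentzian metric — and take their inner product to express $\cos A$ in terms of $a,b,c$; combined with $\cosh c=\cosh a\cosh b$ and the identity $\cosh^2-\sinh^2=1$ this yields $\cos A$ and $\sin A$ explicitly, and the advertised relations drop out. Alternatively, and perhaps more in the spirit of an ``auxiliary lemma,'' one can avoid tangent vectors entirely: once $\cosh c=\cosh a\cosh b$ is known, use the hyperbolic law of cosines $\cosh a=\cosh b\cosh c-\sinh b\sinh c\cos A$ (itself a one-line consequence of the same inner-product computation) to solve for $\cos A$, substitute the Pythagorean relation, and simplify. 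The formula $\tanh a=\sinh b\tan A$ then comes from dividing $\sinh a=\sinh c\sin A$ by $\tanh b=\tanh c\cos A$ after writing $\sinh a=\tanh a\cosh a$ and using $\cosh c=\cosh a\cosh b$; the dual formulas follow by the symmetry $a\leftrightarrow b,\ A\leftrightarrow B$.

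The main obstacle is not conceptual but bookkeeping: correctly normalizing the tangent vectors in the Lorentzian metric and keeping the roles of $a,b$ versus the opposite angles $A,B$ straight, since an off-by-one mislabeling propagates through every formula. I expect no genuine difficulty — this is classical and the hyperboloid model reduces everything to linear algebra — so the write-up should be short: establish $\cosh c=\cosh a\cosh b$, then derive $\cos A$, then list the consequences.
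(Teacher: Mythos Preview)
Your plan is sound and would produce a correct proof of the standard hyperbolic right-triangle relations, but there is nothing in the paper to compare it against: the paper does not prove this lemma at all. It simply quotes the four formulas (\ref{Pifagor})--(\ref{shshsin}) with a citation to Prasolov's textbook \cite{Pras04} and uses them as black-box tools in the subsequent sections. So your hyperboloid-model derivation is extra work the authors chose to outsource to the literature; it is perfectly valid, but the paper's ``proof'' is just the reference.
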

\begin{equation}\label{Pifagor}
\cosh |AB| = \cosh |CA| \cosh |CB| \; (\textit{the hyperbolic  Pythagorean theorem});
\end{equation}
\begin{equation}\label{ththcos}
\tanh |AC| = \tanh |AB| \cos \angle A;
\end{equation}
\begin{equation}\label{thshtan}
\tanh |CB| = \sinh |AC| \tan \angle A;
\end{equation}
\begin{equation}\label{shshsin}
\sinh |CB| =\sinh |AB| \tan \angle A.
\end{equation}

Consider a regular tetrahedron with the plane angle $\alpha$ of its face.
Denote by $h$  the altitude of its face.
Using (\ref{ththcos}) and (\ref{tha}), we get 
\begin{equation}\label{h}
\tanh h =\tanh a \cos \frac{\alpha}{2} = \cos \frac{\alpha}{2} \frac{\sqrt{2\cos \alpha - 1}}{\cos \alpha}.
\end{equation}

\begin{lemma}
If  a geodesic successively intersects three edges sharing a common vertex
on a regular tetrahedron   in  Lobachevsky space
    and one of these edges twice,
   then this geodesic is self-intersecting.
   \end{lemma}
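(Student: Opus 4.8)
The plan is to set up the development of the tetrahedron along such a geodesic and derive a contradiction with simplicity by a convexity argument based on Corollary~\ref{convexdistcor}. Label the common vertex $S$ and the three edges emanating from it as $e_1$, $e_2$, $e_3$. Suppose the geodesic $\gamma$ crosses, in order, some edge, then another, then returns to one of them, so that in its itinerary three consecutive crossing points $X \in e_i$, $Y \in e_j$, $Z \in e_i$ lie on edges through $S$ with the first and third on the \emph{same} edge $e_i$. Unfold the (at most two) faces traversed between $X$ and $Z$ onto the hyperbolic plane; in this development the image of $S$ is a single point $\widetilde S$ (the faces around $S$ that we use share the vertex $S$, and consecutive ones are glued along the edges through $S$), and $X$, $Y$, $Z$ develop to points $\widetilde X$, $\widetilde Z$ on the two rays $\widetilde S\widetilde{e_i}^{\,(1)}$, $\widetilde S\widetilde{e_i}^{\,(2)}$ issuing from $\widetilde S$, with $\widetilde Y$ between them on $\widetilde S\widetilde{e_j}$.

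Next I would look at the geodesic segment $\widetilde X \widetilde Z$ of the development, which is the straight chord realizing the piece of $\gamma$ between the two visits to $e_i$. Consider the function $\rho$ giving the hyperbolic distance from $\widetilde S$ to the points of this chord; by Corollary~\ref{convexdistcor} it is convex, hence attains its maximum on the chord at one of the endpoints, so $\operatorname{dist}(\widetilde S, \widetilde X \widetilde Z) \le \max\{|\widetilde S\widetilde X|,\,|\widetilde S\widetilde Z|\}$, and in fact the distances $|\widetilde S\widetilde X|$, $|\widetilde S\widetilde Z|$ are each at most the edge length $a$. The key geometric observation is that the total angle at $\widetilde S$ swept out by the two faces between the rays to $\widetilde X$ and to $\widetilde Z$ equals $2\alpha$ (two face-angles), and since $\alpha < \tfrac{\pi}{3}$ this total angle is less than $\tfrac{2\pi}{3} < \pi$. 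I then re-examine $\gamma$ on the tetrahedron itself \emph{before} $X$ and \emph{after} $Z$: the arcs of $\gamma$ adjacent to $X$ on the far side and adjacent to $Z$ on the far side, together with the chord $\widetilde X\widetilde Z$, are forced (again using convexity of distance to $\widetilde S$, and that $\gamma$ cannot pass through $S$) to loop around the vertex $S$; two sub-arcs of $\gamma$ that together encircle $S$ must cross each other, since $\gamma$ is closed and $S$ has positive cone-type excess is not available here — instead the crossing follows because the small triangle cut off at $\widetilde S$ by $\widetilde X \widetilde Z$ has angle sum $< \pi$ forcing the continuations of $\gamma$ past $\widetilde X$ and past $\widetilde Z$ to re-enter this triangle and meet.

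More precisely, the cleanest way to finish: in the development, extend the chord $\widetilde X \widetilde Z$ and look at where the developed image of $\gamma$ goes after $\widetilde Z$ — it continues into the next face, whose image contains $\widetilde S$; because the angle at $\widetilde S$ between consecutive copies of $e_i$ is $2\alpha<\pi$, the developed geodesic, continued in both directions, must on the tetrahedron cross the previously traversed arc lying in the star of $S$. Transferring this crossing back from the development to the tetrahedron (the developing map is a local isometry and is injective on the small neighborhood of $S$ we used, since $2\alpha < 2\pi$) yields a genuine self-intersection point of $\gamma$, contradicting simplicity. I expect the main obstacle to be the bookkeeping in the development: carefully arguing that the finitely many faces in the star of $S$ that $\gamma$ visits develop \emph{injectively} around $\widetilde S$ (total developed angle $< 2\pi$, which uses $\alpha<\tfrac{\pi}{3}$ sharply) and that the two arcs in question really are forced to intersect rather than merely nest; the convexity corollary does the analytic heavy lifting, but pinning down the combinatorial position of $\widetilde Y$ relative to the chord $\widetilde X \widetilde Z$ and the continuations of $\gamma$ will require care.
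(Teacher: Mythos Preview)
Your setup misreads the hypothesis. The lemma says the geodesic successively meets \emph{all three} edges through the common vertex and then one of them a second time: four consecutive crossings $X_1\in e_1$, $X_2\in e_2$, $X_3\in e_3$, $Y_1\in e_1$. You instead posit three consecutive crossings $X\in e_i$, $Y\in e_j$, $Z\in e_i$ through only two faces. That itinerary cannot occur on a tetrahedron: after crossing $e_j$ the geodesic enters the face $(e_j,e_k)$ with $k\ne i$, so the next crossing cannot be on $e_i$. Because of this, your two-face development with total angle $2\alpha$ at $\widetilde S$ is the wrong picture; the right development unrolls all three faces around the vertex, with total angle $3\alpha$, and the geodesic segment $X_1X_2X_3Y_1$ becomes a single chord joining the two copies of $e_1$.

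Beyond the setup, the argument is not yet a proof. You invoke convexity of $\rho$ only to bound the \emph{maximum}, which is not what is needed, and the claimed self-intersection is asserted (``forced to loop around $S$'', ``forced to re-enter this triangle and meet'') rather than exhibited. The paper's proof supplies the missing concrete mechanism: drop the perpendicular $A_4H_0$ from the vertex to the chord; since $|A_4X_1|<|A_4Y_1|$ one has $\angle H_0A_4Y_1>\tfrac{3\alpha}{2}$. Mark $Z_1$ on the $Y_1$-side and $Z_2$ on the opposite side of $H_0$ along $\gamma$ so that $\angle H_0A_4Z_1=\angle H_0A_4Z_2=\tfrac{3\alpha}{2}$. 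Because the cone angle at $A_4$ is exactly $3\alpha$, the rays $A_4Z_1$ and $A_4Z_2$ are the \emph{same} ray on the tetrahedron (they are antipodal to $A_4H_0$ in the cone), and the congruent right triangles $Z_1A_4H_0$, $Z_2A_4H_0$ give $|A_4Z_1|=|A_4Z_2|$. Hence $Z_1$ and $Z_2$ are the same point of the tetrahedron, reached by $\gamma$ at two distinct parameter values --- the self-intersection. This ``antipodal ray in a cone of angle $3\alpha$'' idea is the crux you are missing; once you correct the development to three faces, try to reproduce it.
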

\begin{proof}

Let $A_1A_2A_3A_4$ be a regular tetrahedron in  Lobachevsky space.
Suppose the geodesic $\gamma$ successively intersects $A_4A_1$, $A_4A_2$ and $A_4A_3$  
at the points $X_1$, $X_2$, $X_3$ and then intersects $A_4A_1$ at the point $Y_1$.
If $Y_1$ coincides with $X_1$, then this point is a self-intersection point of $\gamma$.
Suppose that the length of $A_4X_1$ is  less than the  length of $A_4Y_1$.

Consider the trihedral angle at the vertex $A_4$. Cut this angle along the edge $A_4A_1$ and develop it to the plane.
We obtain the convex polygon $A_1A_4A'_1A_3A_2$.
The part $X_1X_2X_3Y_1$ of the geodesic is the straight line segment on the development.

Let $\rho(A_4, X)$ be the distance function between the vertex $A_4$ and a point $X$ on $\gamma$.
From Corollary \ref{convexdistcor} it follows that the function $\rho(A_4, X)$ is a convex function.
This function attains its minimum at the point $H_0$ such that $A_4H_0$ is orthogonal to $\gamma$ and 
$\angle H_0A_4Y_1 > \frac{3\alpha}{2}$.

\begin{figure}[h]
\begin{center}
\begin{minipage}[h]{0.4\linewidth}
\center{\includegraphics[width=50mm]{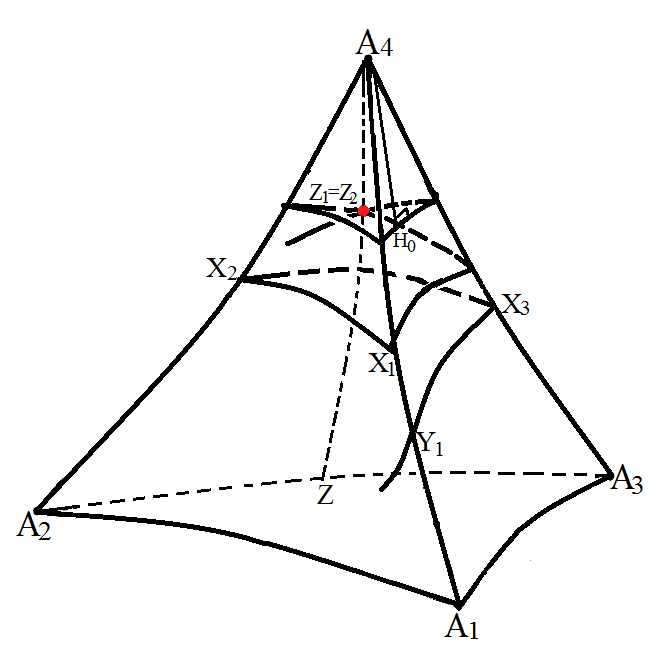} }
\caption{  }
\label{A1A4A1v1}
\end{minipage}
\hfill 
\begin{minipage}[h]{0.5\linewidth}
\center{\includegraphics[width=50mm]{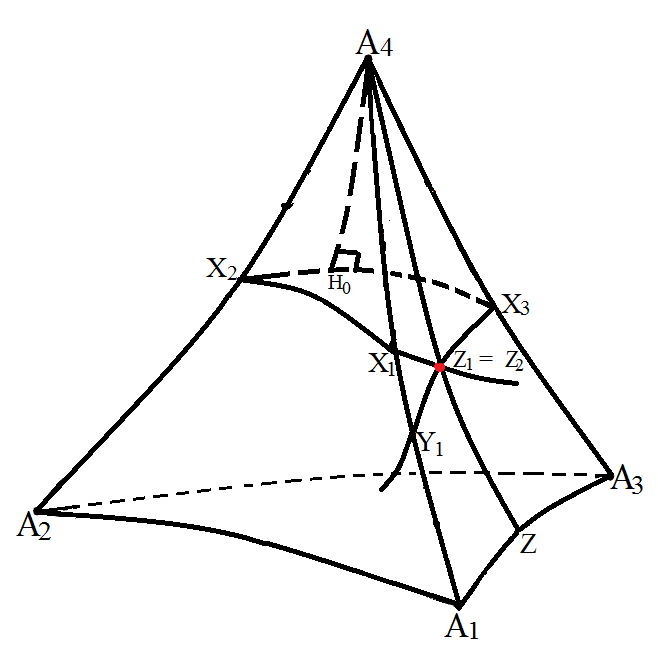} }
\caption{}
\label{A1A4A1v2}
\end{minipage}

\end{center}
\end{figure}

On the segment $H_0Y_1$ mark the point $Z_1$ such that $\angle H_0A_4Z_1 = \frac{3\alpha}{2}$.
Then on the part of $\gamma$ started from the point $H_0$ in the direction opposite to $H_0Y_1$
 mark the point $Z_2$ such that $\angle H_0A_4Z_2 = \frac{3\alpha}{2}$.
The point $Z_2$ also lies in a face incident to the vertex $A_4$. 

Since $\angle H_0A_4Z_1 = \angle H_0A_4Z_2 = \frac{3\alpha}{2}$, 
it follows that the  segments $A_4Z_1$ and $A_4Z_2$ belong to the   segment $A_4Z$ 
opposite to $A_4H_0$ on the tetrahedron.
Furthermore, the triangles $Z_1A_4H_0$ and $Z_2A_4H_0$ are equal.
It follows that on the tetrahedron the points $Z_1$ and $Z_2$ correspond to the same point on  $A_4Z$.
This point is the self-intersection point of the geodesic $\gamma$ (Figure \ref{A1A4A1v1}, Figure \ref{A1A4A1v2}).
\end{proof}

\begin{lemma}\label{nesdist}
Let $d$ be the smallest distance between the set of vertices of a regular tetrahedron
  in  Lobachevsky space 
   and a simple closed geodesic on this tetrahedron.
Then
\begin{equation}\label{distvertex}
\tanh d>\cos\frac{3\alpha}{2}\cos\frac{\alpha}{2} \frac{\sqrt{2\cos\alpha-1}}{\cos\alpha},
 \end{equation}
where $\alpha$ is the plane angle  of a face of the tetrahedron.
\end{lemma}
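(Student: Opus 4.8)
The plan is to bound below the distance from a vertex to the geodesic by exhibiting, near the closest vertex, a configuration of three edges emanating from that vertex that the geodesic must cross, and then applying the previous lemma. Let $\gamma$ be a simple closed geodesic, and let $A_4$ be a vertex realizing the minimal distance $d$ to $\gamma$. Let $H_0$ be the point of $\gamma$ closest to $A_4$, so that $A_4H_0 \perp \gamma$ and $|A_4H_0| = d$. By Corollary \ref{convexdistcor} the function $\rho(A_4, X)$ measured along $\gamma$ is convex with its unique minimum at $H_0$; hence $\gamma$ leaves every neighbourhood of $A_4$ on both sides of $H_0$. The point $H_0$ lies in one of the three faces at $A_4$; develop the trihedral angle at $A_4$ into the plane as in the proof of Lemma 3, obtaining a convex polygon with apex $A_4$ and total angle $3\alpha$ at $A_4$.

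Next I would argue that $\gamma$ must cross the edge of that face on (at least) one side of the foot $H_0$ at an angle controlled by $\alpha$. Concretely, consider the ray of $\gamma$ from $H_0$ in one direction: since $\gamma$ is simple and closed it cannot stay in the single face containing $H_0$, so it meets an edge at $A_4$; let $H$ be the foot of the perpendicular from $A_4$ to the line containing that edge, and let the segment $A_4H$ play the role of an ``altitude'' of the face, of length $h$ given by (\ref{h}). The key geometric claim is that the directed angle $\angle H_0 A_4 (\text{exit point})$ is at least $\tfrac{3\alpha}{2}$ on one side — this is exactly the angular budget used in Lemma 3, since if on \emph{both} sides the angle stayed below $\tfrac{3\alpha}{2}$ the developed geodesic segment would subtend an angle $<3\alpha$ at $A_4$ and could be confined to the three faces around $A_4$ without the configuration of Lemma 3 arising; but a short analysis of the development (the full angle at $A_4$ being $3\alpha$, and the two endpoints of the relevant sub-development lying on the two bounding edges $A_4A_1$ and $A_4A_1'$) forces the subtended angle to be at least... — more carefully, one shows that on the side where $\gamma$ first exits, the perpendicular $A_4H_0$ together with $A_4H$ and the exit ray form a right triangle in which the angle at $A_4$ is at most $\tfrac{3\alpha}{2}$, equivalently $\angle(A_4H_0, A_4H) \le \tfrac{3\alpha}{2}$, so that $\rho(A_4,H)$, hence also the minimum $d=\rho(A_4,H_0)$, is comparable to $h\cos\tfrac{3\alpha}{2}$.

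Then the estimate follows by the hyperbolic Pythagorean computation: in the right triangle $A_4H_0H$ with right angle at $H_0$, formula (\ref{ththcos}) gives $\tanh d = \tanh|A_4H_0| = \tanh|A_4H|\cos\angle HA_4H_0 = \tanh h \cdot \cos\angle HA_4H_0 > \tanh h \cdot \cos\tfrac{3\alpha}{2}$, since the angle $\angle HA_4H_0$ is strictly less than $\tfrac{3\alpha}{2}$ (strictness because equality would force the configuration of Lemma 3, making $\gamma$ self-intersecting, contrary to simplicity). Substituting the value of $\tanh h$ from (\ref{h}), namely $\tanh h = \cos\tfrac{\alpha}{2}\,\dfrac{\sqrt{2\cos\alpha-1}}{\cos\alpha}$, yields
\begin{equation}
\tanh d > \cos\frac{3\alpha}{2}\cos\frac{\alpha}{2}\,\frac{\sqrt{2\cos\alpha-1}}{\cos\alpha}, \notag
\end{equation}
which is (\ref{distvertex}).

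The main obstacle I anticipate is the middle step: rigorously showing that at the closest vertex the geodesic really does exit within angular distance $\tfrac{3\alpha}{2}$ of the perpendicular foot — i.e.\ converting ``$\gamma$ is simple'' into the sharp angular bound via Lemma 3. One must handle the bookkeeping on the development carefully (which edges $\gamma$ crosses, in which order, and that the relevant crossing indeed lies in a face incident to $A_4$), and argue that if the angle were $\ge \tfrac{3\alpha}{2}$ then the symmetric-point construction of Lemma 3 produces a genuine self-intersection of $\gamma$ on the tetrahedron, not merely on the development. The remaining inputs — convexity of $\rho$ from Corollary \ref{convexdistcor}, the hyperbolic right-triangle relations from Lemma 2, and the altitude formula (\ref{h}) — are then purely computational.
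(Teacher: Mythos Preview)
Your endpoint is correct: the target inequality is $\tanh d > \cos\tfrac{3\alpha}{2}\,\tanh h$, and substituting (\ref{h}) finishes. But the middle step, which you flag as the obstacle, has a concrete problem beyond bookkeeping. In your right triangle $A_4H_0H$ with the right angle at $H_0$, the vertex $H$ must lie on the line $\gamma$ (since $A_4H_0\perp\gamma$); it cannot simultaneously be ``the foot of the perpendicular from $A_4$ to the line containing that edge'' with $|A_4H|=h$. There is no triangle to which (\ref{ththcos}) applies in the way you describe, and the statement you are trying to prove---that some edge-crossing of $\gamma$ occurs at angular distance $<\tfrac{3\alpha}{2}$ from $H_0$---is not the right formulation.

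The paper's device is an auxiliary segment rather than an edge. With $A_4H_0$ the perpendicular to $\gamma$ lying in one face, draw a segment $A_4K$ in an adjacent face making angle exactly $\tfrac{3\alpha}{2}$ with $A_4H_0$, so that $K$ lies on the far edge of that face; one checks $|A_4K|\ge h$, with equality when $A_4K$ is that face's altitude. Now cut the trihedral angle at $A_4$ along $A_4K$ (not along an edge) and develop: the total angle at $A_4$ is $3\alpha$ and, by construction, $A_4H_0$ is its \emph{bisector}, so the developed $\gamma$---a straight line perpendicular to this bisector at distance $d$---is symmetric in the two boundary rays $A_4K_1$, $A_4K_2$. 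Marking $P_i$ on $A_4K_i$ at distance $h$ from $A_4$, the chord $P_1P_2$ meets $A_4H_0$ orthogonally at a point $H_p$ with $\tanh|A_4H_p|=\cos\tfrac{3\alpha}{2}\,\tanh h$ by (\ref{ththcos}). If $d\le|A_4H_p|$ then $\gamma$ meets each ray at a point $Z_i$ with $|A_4Z_i|\le h\le|A_4K|$, so $Z_i$ lies on the genuine segment $A_4K$ inside the face; by the symmetry $|A_4Z_1|=|A_4Z_2|$, hence on the tetrahedron $Z_1=Z_2$ is a self-intersection of $\gamma$. Therefore $d>|A_4H_p|$. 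In short, $h$ enters only as a lower bound for $|A_4K|$ (guaranteeing the $Z_i$ stay in the face), and the angle $\tfrac{3\alpha}{2}$ is \emph{chosen} so that cutting along $A_4K$ makes $A_4H_0$ the bisector---it is not an angle you must show some edge-crossing realizes.
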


\begin{proof} 
Consider a regular tetrahedron $A_1A_2A_4A_3$   in Lobachevsky space
 and a simple closed geodesic $\gamma$  on this tetrahedron.
Assume that the distance between vertex  $A_4$ and $\gamma$
 is the  smallest distance from  the set of vertices of a  tetrahedron to  $\gamma$.
Construct the  line segment $A_4H$ that is orthogonal to $\gamma$ at the point $H_0$.
Assume  $A_4H$ belongs to the face $A_2A_4A_3$.
Denote by $\beta$ the angle between $A_4H $ and $A_4A_2$. 
Without loss of generality we assume that $0 \le \beta \le \frac{\alpha}{2}$. 

\begin{figure}[h]
\begin{center}
\begin{minipage}[h]{0.4\linewidth}
\includegraphics[width=50mm]{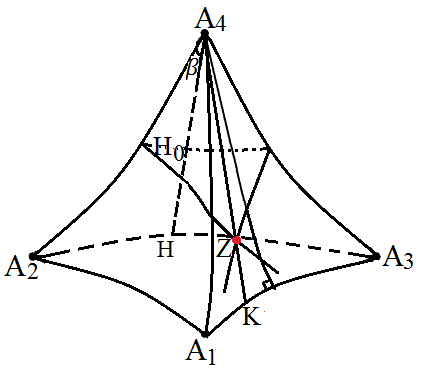}
\caption{  }
\label{nescond1}
\end{minipage}
\begin{minipage}[h]{0.5\linewidth}
\includegraphics[width=70mm]{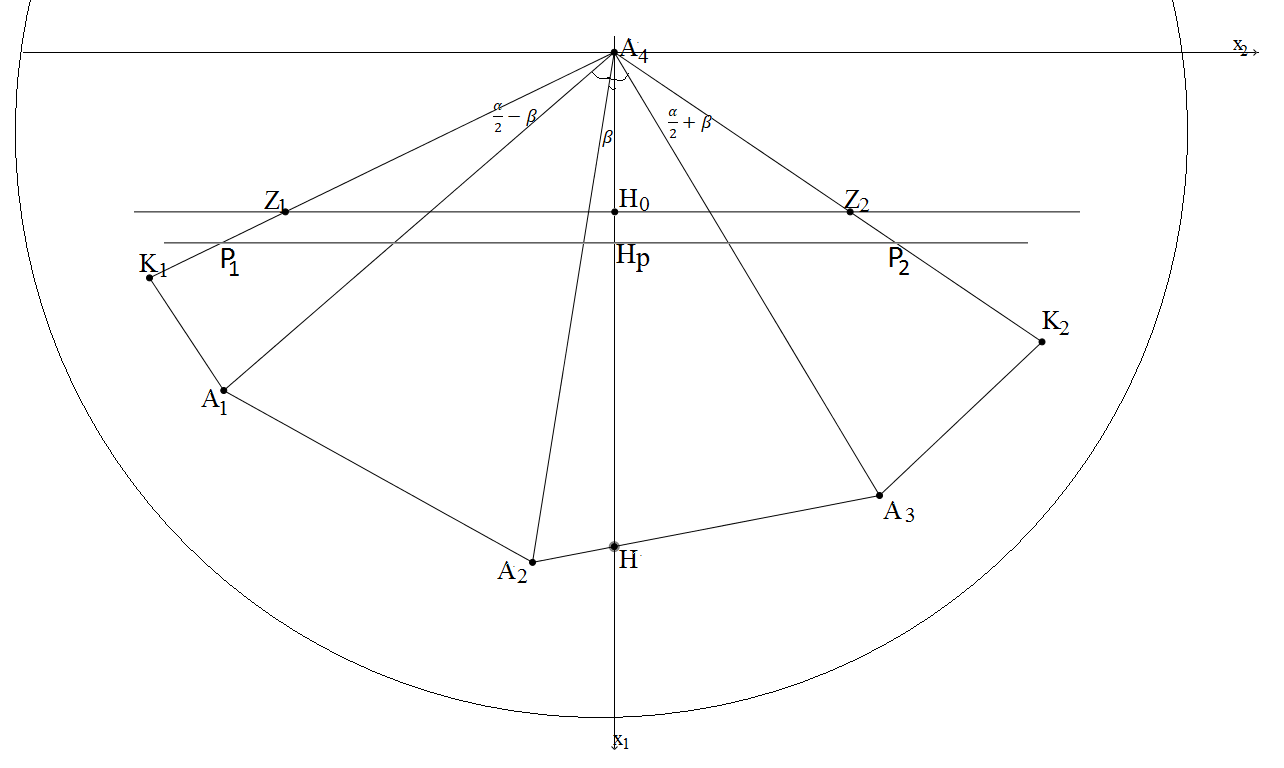} 
\caption{ }
\label{KAK}
\end{minipage}
\end{center}
\end{figure}

Construct the segment $A_4K$ such that
the plane angle between $A_4K$ and $A_4H$ equals  $\frac{3\alpha}{2}$ (Figure \ref{nescond1}). 
It follows that  $A_4K$ lies  in the face $A_1A_4A_3$ and 
the angle between $A_4K$ and $A_4A_1$ equals  $(\frac{\alpha}{2}-\beta)$ on this face.
Note that if $\beta=\frac{\alpha}{2}$, then $A_4K$ coincides with $A_4A_1$.
If $\beta = 0$, then $A_4K$ coincides with the altitude of the face and has the smallest length.

Cut the trihedral angle at the vertex $A_4$ by the line $A_4K$ and develop it to the plane.
We obtain the convex polygon $K_1A_4K_2A_3A_2A_1$.
Consider  the  Cayley–Klein model of the hyperbolic plane.
Assume that  the vertex $A_4$ of the development  coincide with the centre of the model (Figure \ref{KAK}).
The angle $K_1A_4K_2$ equals $3\alpha$. 
The segment $A_4H$ corresponds to the bisector of the angle $K_1A_4K_2$.
The geodesic $\gamma$ is a straight line that is ortogonal to $A_4H$ at the point $H_0$. 

On the lines  $A_4K_1$ and $A_4K_2$ mark the points $P_1$ and $P_2$  respectively so that 
the length of $A_4P_1$  and the length of $A_4P_2$ are equal to the length of the altitude $h$ of the face (Figure \ref{KAK}).
The line segment  $P_1P_2$ is ortogonal to $A_4H$ at the point $H_p$.
From  equation  (\ref{ththcos}) applied to  the triangle $A_4H_pP_1$  we obtain
\begin{equation}\label{A4Hp}
\tanh |A_4H_p| = \cos\frac{3\alpha}{2}\tanh  h.
\end{equation}

If $ d \le |A_4H_p|$, then $\gamma$ lies above the segment $P_1P_2$.
It follows that $\gamma$ intersects the  lines $A_4K_1$ and $A_4K_2$ at the points $Z_1$ and $Z_2$ respectively.

The segments $A_4K_1$ and $A_4K_2$ correspond to the segment $A_4K$ on the tetrahedron.
Then the points $Z'_1$ and $Z'_2$ are the same point $Z$ on the segment $A_4K$.
We obtain that the point $Z$ is the self-intersection point of the geodesic $\gamma$.

Thus we get that $ d >  |A_4H_p|$  is a necessary condition for
 the geodesic $\gamma$ to have no points of self-intersection on the regular tetrahedron   in  Lobachevsky space.
From  (\ref{A4Hp}) it follows that 
\begin{equation}\label{thdcosthh}
\tanh d > \cos\frac{3\alpha}{2}\tanh h.
\end{equation}

Using (\ref{h}) and  (\ref{thdcosthh}), we obtain inequality  (\ref{distvertex}). 
\end{proof}

\begin{cor}\label{nesdistcor}
Let $d$ be the smallest distance between the set of vertices of a regular tetrahedron
  in Lobachevsky space 
   and a simple closed geodesic on this tetrahedron.
Then
\begin{equation}\label{distvertex2}
d> \frac{1}{2} \ln \left( \frac{  \sqrt{2\pi^3} + \left( \pi- 3\alpha \right)^{\frac{3}{2}} }
{   \sqrt{2\pi^3} - \left( \pi- 3\alpha \right)^{\frac{3}{2}}    }   \right),
 \end{equation}
where $\alpha$ is the plane angle  of a face of the tetrahedron.
\end{cor}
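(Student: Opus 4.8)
The plan is to reduce Corollary \ref{nesdistcor} to an elementary trigonometric inequality and then to invert the hyperbolic tangent. By the definition of the inverse hyperbolic tangent, $d=\frac{1}{2}\ln\frac{1+\tanh d}{1-\tanh d}$, and the function $\varphi(t)=\frac{1}{2}\ln\frac{1+t}{1-t}$ is strictly increasing on $(-1,1)$, since $\varphi'(t)=\frac{1}{1-t^{2}}>0$. The right-hand side of (\ref{distvertex2}) equals $\varphi\!\left(\frac{(\pi-3\alpha)^{3/2}}{\sqrt{2\pi^{3}}}\right)$, and $\frac{(\pi-3\alpha)^{3/2}}{\sqrt{2\pi^{3}}}<\frac{\pi^{3/2}}{\sqrt{2\pi^{3}}}=\frac{1}{\sqrt{2}}<1$ for $0<\alpha<\frac{\pi}{3}$. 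Hence, by Lemma \ref{nesdist} and the monotonicity of $\varphi$, Corollary \ref{nesdistcor} will follow once we show that
\begin{equation*}
\cos\frac{3\alpha}{2}\,\cos\frac{\alpha}{2}\,\frac{\sqrt{2\cos\alpha-1}}{\cos\alpha}\ \ge\ \frac{(\pi-3\alpha)^{3/2}}{\sqrt{2\pi^{3}}}
\end{equation*}
for every $\alpha\in\left(0,\frac{\pi}{3}\right)$.

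To prove this inequality I would bound the left-hand side factor by factor on the interval $\left(0,\frac{\pi}{3}\right)$, where all the quantities that occur are positive. First, $\cos\frac{3\alpha}{2}=\sin\frac{\pi-3\alpha}{2}\ge\frac{\pi-3\alpha}{\pi}$ by Jordan's inequality $\sin x\ge\frac{2x}{\pi}$ on $\left[0,\frac{\pi}{2}\right]$, applied with $x=\frac{\pi-3\alpha}{2}\in\left[0,\frac{\pi}{2}\right]$. Second, $\cos$ is strictly decreasing on $\left[0,\frac{\pi}{2}\right]$ and $0<\frac{\alpha}{2}<\alpha<\frac{\pi}{2}$, so $\cos\frac{\alpha}{2}>\cos\alpha>0$, hence $\frac{\cos(\alpha/2)}{\cos\alpha}>1$. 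Third, and this is the only step requiring a short computation, $2\cos\alpha-1\ge\frac{\pi-3\alpha}{\pi}$: setting $g(\alpha)=2\cos\alpha-2+\frac{3\alpha}{\pi}$ one has $g(0)=g\!\left(\frac{\pi}{3}\right)=0$, while $g''(\alpha)=-2\cos\alpha<0$ on $\left(0,\frac{\pi}{3}\right)$, so $g$ is concave there and therefore $g\ge0$. Multiplying the three estimates yields
\begin{equation*}
\cos\frac{3\alpha}{2}\,\cos\frac{\alpha}{2}\,\frac{\sqrt{2\cos\alpha-1}}{\cos\alpha}\ \ge\ \left(\frac{\pi-3\alpha}{\pi}\right)^{3/2}=\frac{(\pi-3\alpha)^{3/2}}{\pi^{3/2}}\ >\ \frac{(\pi-3\alpha)^{3/2}}{\sqrt{2\pi^{3}}},
\end{equation*}
which is the required inequality. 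Combining it with Lemma \ref{nesdist} and applying $\varphi$ gives
\begin{equation*}
d\ >\ \varphi\!\left(\frac{(\pi-3\alpha)^{3/2}}{\sqrt{2\pi^{3}}}\right)=\frac{1}{2}\ln\left(\frac{\sqrt{2\pi^{3}}+(\pi-3\alpha)^{3/2}}{\sqrt{2\pi^{3}}-(\pi-3\alpha)^{3/2}}\right),
\end{equation*}
which is precisely (\ref{distvertex2}).

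I do not expect a genuine obstacle here: the proof is essentially a one-line appeal to Lemma \ref{nesdist} followed by three elementary bounds, of which only $2\cos\alpha-1\ge\frac{\pi-3\alpha}{\pi}$ is not immediate, and that one is dispatched by the concavity argument above. The strict inequality $\left(\frac{\pi-3\alpha}{\pi}\right)^{3/2}>\frac{(\pi-3\alpha)^{3/2}}{\sqrt{2\pi^{3}}}$ shows the corollary is stated with room to spare, so one could afford to weaken any one of the three factor estimates (for instance, replacing $\cos\frac{\alpha}{2}>\cos\alpha$ with $\cos\frac{\alpha}{2}\ge\cos\frac{\pi}{6}=\frac{\sqrt{3}}{2}$ and $\cos\alpha\le1$) without losing the conclusion.
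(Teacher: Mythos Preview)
Your proof is correct and follows essentially the same approach as the paper: both start from Lemma~\ref{nesdist}, bound the three trigonometric factors separately to obtain $\tanh d>\frac{(\pi-3\alpha)^{3/2}}{\sqrt{2\pi^{3}}}$, and then invert the hyperbolic tangent. The only differences are cosmetic---the paper bounds $2\cos\alpha-1$ via the product formula $4\sin(\tfrac{\pi}{6}-\tfrac{\alpha}{2})\sin(\tfrac{\pi}{6}+\tfrac{\alpha}{2})$ and treats $\cos\tfrac{\alpha}{2}$ and $1/\cos\alpha$ separately, whereas your concavity argument for $2\cos\alpha-1\ge\frac{\pi-3\alpha}{\pi}$ and the observation $\cos(\alpha/2)>\cos\alpha$ are slightly sharper and arguably cleaner, yielding the stronger intermediate bound $(\frac{\pi-3\alpha}{\pi})^{3/2}$ before discarding the extra factor of $\sqrt{2}$.
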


\begin{proof}
It is known that
\begin{equation}\label{siny}
\sin y> \frac{2}{\pi}y   \;\;for\;\;   0<y<\frac{\pi}{2}.
 \end{equation}

Consider  the function $\sqrt{2\cos\alpha-1}$. We get 
 \begin{equation}
2\cos\alpha-1 = 2\cos\alpha-2\cos \frac{\pi}{3} = 
4\sin \left( \frac{\pi}{6}-\frac{\alpha}{2} \right)  \sin \left( \frac{\pi}{6}+\frac{\alpha}{2}  \right). \notag
 \end{equation}
 On the interval $ (0, \frac{\pi}{3})$ the function  $\sin(\frac{\pi}{6}+\frac{\alpha}{2})$ increases.
At $\alpha = 0$ this function equals $\frac{1}{2}$.
It follows that $\sin(\frac{\pi}{6}+\frac{\alpha}{2})>\frac{1}{2}$ when $\alpha \in (0, \frac{\pi}{3})$.
The function $\sin(\frac{\pi}{6}-\frac{\alpha}{2})$ increases on the interval $ (0, \frac{\pi}{3})$.
From (\ref{siny}) we get $\sin(\frac{\pi}{6}-\frac{\alpha}{2})>\frac{1}{\pi} \left( \frac{\pi}{3}-\alpha \right) $.
We obtain
 \begin{equation}\label{root}
\sqrt{2\cos\alpha-1} > \sqrt{ \frac{2}{3\pi} \left( \pi-3\alpha \right) }.
\end{equation}
 
 Consider $\cos \frac{3\alpha}{2}$.
 Since $0<\alpha<\frac{\pi}{3}$,  it follows that  $0< \frac{3\alpha}{2}<\frac{\pi}{2}$.
 On the interval $0<y<\frac{\pi}{2}$ the function $\cos y$ decreases.
We have the estimate
 \begin{equation}\label{cosy}
\cos y>1- \frac{2}{\pi}y,   \;\;\;\;     0<y<\frac{\pi}{2}. \notag
 \end{equation}
From this  estimate we obtain
 \begin{equation}\label{cos3alpha2}
\cos \frac{3\alpha}{2} > \frac{1}{\pi} \left( \pi-3\alpha \right).
 \end{equation}

  On the interval $0<\alpha<\frac{\pi}{3}$ the function  $\cos \frac{\alpha}{2}$ decreases.
  We  have $\cos \frac{\alpha}{2} > \frac{\sqrt{3}}{2}$ when $0<\alpha<\frac{\pi}{3}$.
 
From the computation above and (\ref{root}), (\ref{cos3alpha2}) it follows that
 \begin{equation}\label{vspomd}
\tanh d> \frac{1}{\sqrt{2\pi^3}} \left( \pi- 3\alpha \right)^{\frac{3}{2}}. 
 \end{equation}
 
 From  inequality  (\ref{vspomd}) we obtain   inequality  (\ref{distvertex2}).

\end{proof}

\section{ Properties of closed geodesics on regular tetrahedra in  Euclidean space}

Consider a regular tetrahedron $A_1A_2A_3A_4$ in Euclidean space (Figure \ref{ABCD}).
Assume the geodesic $\gamma$ starts at the point $X$ in the edge $A_1A_2$ and then goes to the face $A_1A_2A_4$. 
The development of a regular tetrahedron along the geodesic is a part of the regular triangular tiling of the Euclidean plane.
Label the vertices of this tiling according to the tetrahedron's vertices. 
It other words, for any development of the tetrahedron the labeling of its vertices 
corresponds to the labeling of the vertices of the tiling (Figure \ref{ABCD_dev}). 
 
 \begin{figure}[h]
\begin{center}
\begin{minipage}[h]{0.4\linewidth}
\includegraphics[width=45mm]{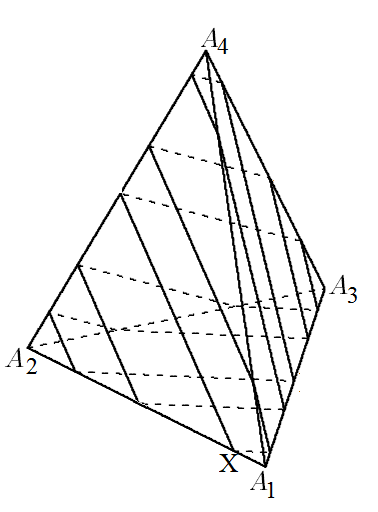}
\caption{ }
\label{ABCD}
\end{minipage}
\begin{minipage}[h]{0.5\linewidth}
\includegraphics[width=85mm]{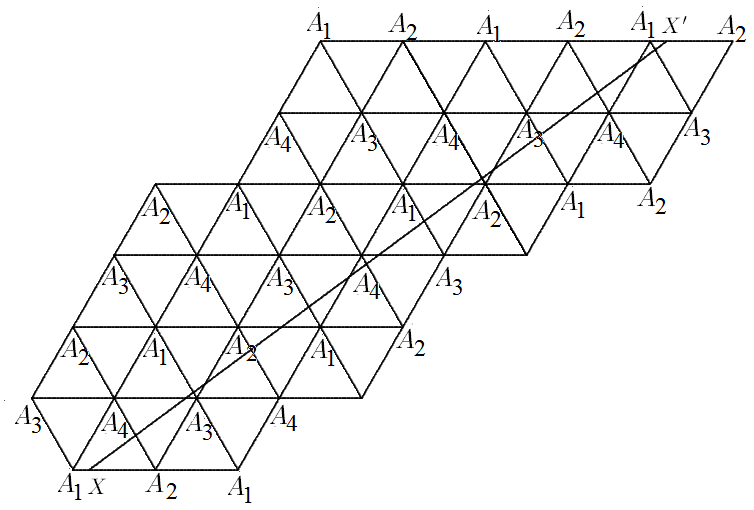}
\caption{ }
\label{ABCD_dev}
\end{minipage}
\end{center}
\end{figure}

To construct a closed geodesic, it is sufficient to choose two identically 
oriented edges in the tiling, for example $A_1A_2$, not on the same line, to mark  points
$X$ and $X'$  on these edges at the same distance from   $A_1$ and to join $X$ and $X'$ by the line segment.
Note that the points $X$ and $X'$  are such that the line $XX'$ does not pass through any vertex of the tiling.
The line segment  $XX'$  realizes a closed geodesic on the tetrahedron, and every closed
geodesic can be obtained in this way \cite{FucFuc07}.

Since the segments of the geodesic  within every
face are parallel to each other, it follows that all closed geodesic on a regular tetrahedron are non-self-intersecting 
(Figure \ref{ABCD}).

Two geodesics are called {\it equivalent} if they pass through the same edges in the same order on the polyhedron.

\begin{lemma}\label{centreqv}
Let $\gamma$ be a simple closed geodesic on a regular tetrahedron in  Euclidean space.
Then there exists a simple closed geodesic $\gamma_0$ equivalent to $\gamma$ such that
$\gamma_0$ passes through the midpoints of two pairs of   opposite edges on the tetrahedron. 
\end{lemma}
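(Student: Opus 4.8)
\emph{Proof proposal.}
The plan is to present the whole equivalence class of $\gamma$ as the set of core curves of a flat cylinder, pick out its central curve, and then extract the midpoints from the three half‑turns of the tetrahedron. Recall the description given at the start of this section: a closed geodesic on a regular Euclidean tetrahedron develops onto a straight segment of the triangular tiling joining two identically labelled, identically situated edge‑points, and two closed geodesics are equivalent exactly when the corresponding segments are parallel (have the same translation vector). Hence sliding the initial point along its edge exhibits the geodesics equivalent to $\gamma$ as a one‑parameter family of pairwise disjoint simple closed geodesics, all of the same type; on the tetrahedron they are the core curves of an open flat cylinder $C$ whose two boundary cycles consist of geodesic segments joining vertices. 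Let $\gamma_0$ be the \emph{central} core curve of $C$, the leaf equidistant from both boundary cycles of $C$. As a leaf of the open cylinder, $\gamma_0$ is simple, misses all vertices, and is equivalent to $\gamma$, so it remains only to show that $\gamma_0$ passes through the midpoints of two pairs of opposite edges.

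Let $R_1,R_2,R_3$ be the half‑turns of the tetrahedron about the three axes through the midpoints of the three pairs of opposite edges; together with the identity they form a group $V\cong\mathbb{Z}_2\times\mathbb{Z}_2$ with $R_iR_j=R_k$ whenever $\{i,j,k\}=\{1,2,3\}$, and each $R_i$ has exactly two fixed points on the surface of the tetrahedron, namely the midpoints $M_i,M_i'$ of the $i$‑th pair of opposite edges. Near such a fixed point $R_i$ is an isometry of a flat disc of order two fixing the centre, i.e. rotation by $\pi$, so in every flat chart on the faces the linear part of $R_i$ equals $-\mathrm{Id}$; consequently $R_i$ carries every family of parallel closed geodesics to a family of closed geodesics of the same slope. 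Since on a regular Euclidean tetrahedron the closed geodesics of a fixed direction form a single flat cylinder (equivalently, the complement of the two vertex‑joining geodesic slits in that direction is connected), it follows that $R_i(C)=C$. Being an isometry of $C$ onto itself, $R_i$ permutes the two boundary cycles and preserves distances to them, hence fixes the leaf equidistant from both: $R_i(\gamma_0)=\gamma_0$ for $i=1,2,3$.

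Now restrict the group $V$ to the circle $\gamma_0$ equipped with its intrinsic metric. Each $R_i|_{\gamma_0}$ is an isometric involution of $\gamma_0$, and it is not the identity, since otherwise $\gamma_0\subset\mathrm{Fix}(R_i)=\{M_i,M_i'\}$; therefore $V$ acts \emph{effectively} on $\gamma_0$ by isometries, and its image is a Klein four‑subgroup of the isometry group $O(2)$ of the circle. Every Klein four‑subgroup of $O(2)$ must contain the antipodal map $-\mathrm{Id}$ (its three involutions cannot all have determinant $-1$, as the product of two such would have determinant $+1$), so it consists of the identity, the antipodal map, and two reflections in mutually orthogonal diameters. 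Hence exactly two of $R_1|_{\gamma_0},R_2|_{\gamma_0},R_3|_{\gamma_0}$ are reflections of $\gamma_0$, say those with indices $i$ and $j$. A reflection of the circle $\gamma_0$ has exactly two fixed points, and these are fixed points of $R_i$ on the whole surface, hence equal to $M_i$ and $M_i'$; likewise the fixed points of $R_j|_{\gamma_0}$ are $M_j$ and $M_j'$. Therefore $\gamma_0$ passes through $M_i,M_i',M_j,M_j'$, i.e. through the midpoints of the $i$‑th and $j$‑th pairs of opposite edges of the tetrahedron, which proves Lemma~\ref{centreqv}.

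The only delicate point is the equality $R_i(C)=C$, which rests on the classical fact that a regular Euclidean tetrahedron carries a single cylinder of closed geodesics in each direction — verifiable from the triangular‑tiling description of developments, or by passing to the torus double cover branched over the four vertices, where each parallel family fills the torus as one cylinder and descends to one cylinder below bounded by two vertex‑joining slits. Once this is granted, the rest of the argument is the elementary structure of Klein four‑subgroups of $O(2)$ used above.
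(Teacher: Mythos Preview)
Your argument is correct and follows a genuinely different route from the paper's. The paper works entirely in explicit Cartesian coordinates on the triangular tiling: it first shows, by direct manipulation of the line equation $y=\frac{q\sqrt3}{q+2p}(x-\mu)$, that a closed geodesic through the midpoint of one edge automatically hits the midpoints of two pairs of opposite edges; then it constructs $\gamma_0$ by finding the two extremal parameters $\mu_1,\mu_2$ at which the developed line first meets a tiling vertex, setting $\mu=(\mu_1+\mu_2)/2$, and checking through a short case analysis that the resulting line passes through an edge midpoint. Your $\gamma_0$, the central leaf of the flat cylinder, is exactly this same geodesic, but instead of computing with coordinates you let the Klein four group $V=\{1,R_1,R_2,R_3\}$ of half-turns act on $\gamma_0$ and read off the midpoints as the fixed points of the two elements that restrict to reflections of the circle. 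This is cleaner and delivers the ``two pairs of opposite edges'' conclusion in one stroke, whereas the paper has to argue separately for the second, third and fourth midpoints; it also makes Corollaries~\ref{corparts1} and~\ref{corparts2} immediate, since the two reflections cut $\gamma_0$ into four congruent arcs. The cost is the single-cylinder-per-direction fact, which you rightly flag as the delicate point and justify via the torus double cover; the paper never needs this because its computation stays inside one explicit development. One small point worth tightening: the claim that $R_i$ preserves slope is most transparently seen not from the local $-\mathrm{Id}$ linear part at $M_i$ alone, but either by noting that the holonomy of the tetrahedron is $\{\pm\mathrm{Id}\}$ so that unsigned direction is globally well defined and preserved by any orientation-preserving isometry, or by observing that $R_i$ sends each pair of opposite edges to itself and hence preserves the intersection count $(p,q,p{+}q)$.
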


\begin{proof}

Consider a triangular tiling of the Euclidean plane 
and label the vertices of the tilling according to the vertices of a regular tetrahedron (Figure \ref{ABCD_dev}). 
Assume that the Cartesian coordinate system has the origin at the vertex $A_1$ 
and the $x$-axis is going  along the edge $A_1A_2$.
Denote by $(x_v, y_v)$ the coordinates of the vertices.
The vertices $A_1$ and $A_2$ belong to the line $y_v=2k\frac{\sqrt{3}}{2}$, 
and their first coordinate is $x_v=l$ $( k, l \in \mathbb{Z} )$.
The vertices $A_3$ and $A_4$ belong to the line $y_v=(2k+1)\frac{\sqrt{3}}{2}$,
and their first coordinate is $x_v=l+\frac{1}{2}$ $( k, l \in \mathbb{Z} )$.
 
Suppose a simple closed geodesic $\gamma$ starts at the point $X$ with  coordinates $(\mu, 0)$, where $0<\mu<1$.
Then the endpoint $X'$ of $\gamma$ has the coordinates $(\mu+q+2p, q\sqrt3)$, 
where $p, q$ are nonnegative integer numbers.
The line $XX'$ is $y=\frac{q\sqrt3}{q+2p} (x-\mu) $.
Note that if $p,q$ are coprime  integers, then the geodesic does not go along itself \cite{FucFuc07}.

First we proof that if geodesic passes through the midpoint of the one edge, then it passes 
through the midpoints of two pairs of the opposite edges.
Assume that a closed geodesic $\gamma_0$ passes through the midpoint of the edge $A_1A_2$.
Then the equation of $\gamma_0$ is 
\begin{equation}\label{gamma0}
y= \frac{q\sqrt3}{q+2p} (x-\frac{1}{2}).
\end{equation}

Substituting the coordinates of the points $A_3$ and $A_4$ to  equation (\ref{gamma0}), 
we get 
\begin{equation}\label{condpqkl}
q(2l-2k-1)=2p(2k+1).
\end{equation}

If $q$ is even then there exist $k$ and $l$ satisfying   equation (\ref{condpqkl}).
It follows that there exist the vertex of the tiling such that $\gamma_0$ passes through this vertex.
It contradicts the properties of $\gamma_0$, therefore $q$ is an odd integer.

The points $X_1$ and $X'_1$ with   coordinates $(\frac{1}{2}, 0)$ and $(q+2p+\frac{1}{2}, q\sqrt3)$ 
satisfy  equation (\ref{gamma0}).
These points are the middle point of the edge $A_1A_2$ on the tetrahedron.
Suppose that the point $X_2$ is the midpoint of $X_1X'_1$.
Then the coordinates of $X_2$ are 
$\left( \frac{q}{2}+p+\frac{1}{2}, \frac{q}{2}\sqrt3 \right)$.
Substituting $q=2k+1$,
we obtain $X_2=\left( k+p+1, (k+\frac{1}{2})\sqrt3 \right)$.
Since the second coordinate of $X_2$ is $(k+\frac{1}{2})\sqrt3$, where $k$ is integer, 
then the point $X_2$ belongs to the line, that contains the vertices $A_3$ and $A_4$. 
Since the first coordinate of $X_2$ is an integer, it follows that $X_2$ is situated in the center of the edge $A_3A_4$. 

Now we prove  that if a closed geodesic on a tetrahedron passes through the midpoint of one edge, 
then it passes through the midpoint of the opposite edge.
\begin{figure}[h]
\begin{center}
\includegraphics[width=100mm]{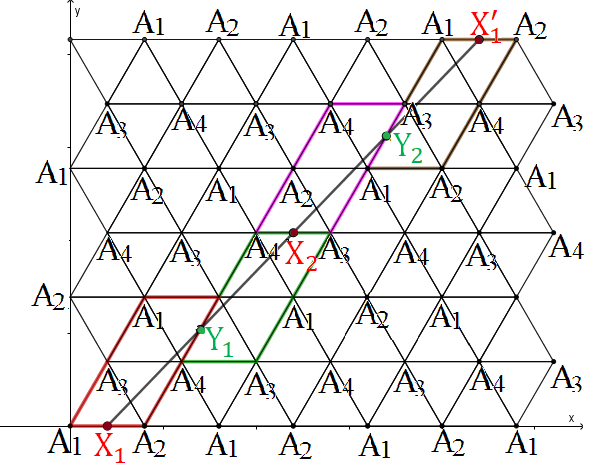}
\caption{  }
\label{vsdev2}
\end{center}
\end{figure}

Let $Y_1$ be the midpoint of $X_1X_2$. 
Then the coordinates of $Y_1$ are $(\frac{q}{4}+\frac{p}{2}+\frac{1}{2}, \frac{q}{4}\sqrt3)$.
Substituting $q=2k+1$, we obtain $Y_1 (\frac{k+p+1}{2}+\frac{1}{4}, (\frac{k}{2}+\frac{1}{4})\sqrt3)$.
From the value of the second coordinate we have that $Y_1$ belongs to the line  that
passes in the middle between the horizontal lines $y=k\frac{\sqrt3}{2}$ and $y=(k+1)\frac{\sqrt3}{2}$.
From the value of the first coordinate of $Y_1$ it follows that $Y_1$ is the center of
$A_1A_3$, or $A_3A_2$, or $A_2A_4$, or $A_4A_1$ (Figure \ref{vsdev2}).

Similarly consider the midpoint $Y_2$ of   $X_2X'_1$.
The coordinates of $Y_2$ are $(\frac{3q}{4}+\frac{3p}{2}+\frac{1}{2}, \frac{3q}{4}\sqrt3)$.
Then $Y_2$ is the midpoint of the edge that is opposite to the edge with $Y_1$.

Now we shall prove that for any closed geodesic $\gamma$ in the regular tetrahedron there exists the closed geodesic 
$\gamma_0$ equivalent to $\gamma$ and passing through the midpoint of the edge.

A geodesic equivalent to $\gamma$ is characterized by the equation
$y=\frac{q\sqrt3}{q+2p} (x-\mu) $, where $p, q$ are fixed coprime integers  
and there exist  $\mu_1, \mu_2 \in [0,1]$ such that  $\mu_1<\mu<\mu_2$.
Notice that the lines 
\begin{equation}\label{gamma1}
\gamma_1: y=\frac{q\sqrt3}{q+2p} (x-\mu_1),
\end{equation}
\begin{equation}\label{gamma2}
\gamma_2: y=\frac{q\sqrt3}{q+2p} (x-\mu_2)  
\end{equation}
pass through the vertices of the tiling.
It follows that there exist the integer numbers $c_1$ and $c_2$ such that  the points
$P_1\left( c_1\frac{q+2p}{2q}+\mu_1, c_1\frac{\sqrt{3}}{2}  \right)$
and 
$P_2\left( c_2\frac{q+2p}{2q}+\mu_2, c_2\frac{\sqrt{3}}{2} \right)$
are the vertices of the tilling and 
 $\gamma_1$ passes through  $P_1$ and $\gamma_2$ passes through $P_2$ .

Consider the closed geodesic $\gamma_0$ equivalent to $\gamma$ such that the equation of $\gamma_0$ is
\begin{equation}
y=\frac{q\sqrt3}{q+2p} \left( x-\frac{\mu_1+\mu_2}{2} \right).     \notag
\end{equation}
It is easy to proof  that the point 
$P_0$ with  coordinates 
$\left( \frac{c_1+c_2}{2} \frac{q+2p}{2q} + \frac{\mu_1+\mu_2}{2} ,  
\frac{c_1+c_2}{2} \frac{\sqrt{3}}{2}  \right)$ belongs to $\gamma_0$.
Let us show that   $P_0$ is a midpoint of some edge of the tiling.
Consider three cases.

1) Both of the points $P_1$ and $P_2$ belong to the line $A_1A_2$. \\
It follows that $c_1=2k_1$, $c_2=2k_2$ and  $x_1=l_1$, $x_2=l_2$,  where $ k_1, k_2, l_1, l_2 $ are integers,
and $P_0$ has the coordinates $\Big(  \frac{l_1+l_2}{2}, (k_1+k_2) \frac{\sqrt{3}}{2}  \Big)$.
Since  $\gamma_0$ doesn't pass through the vertices, we are reduced to two cases.
If $k_1+k_2=2k_0$, then $l_1+l_2 = 2 l_0-1$,  
and the coordinates of $P_0$ are $(l_0-\frac{1}{2}, k_0 \sqrt{3})$.
If $k_1+k_2=2k_0-1$, then $l_1+l_2 = 2 l_0$, 
and $P_0$ has the coordinates $(l_0, (2k_0 -1) \frac{\sqrt{3}}{2})$.
In both cases we get that   $P_0$ is the midpoint of some edge.

2)  Both of the points $P_1$, $P_2$  belong to the line $A_3A_4$.\\
It follows that  $c_1=2k_1+1$, $c_2=2k_2+1$  and  $x_1=l_1+\frac{1}{2}$, $x_2=l_2+\frac{1}{2}$,
  where $ k_1, k_2, l_1, l_2 $ are integers,
and  the coordinates of $P_0$ are $\left(  \frac{l_1+l_2+1}{2}, (k_1+k_2+1)  \frac{\sqrt{3}}{2} \right)$.
As above, since  $\gamma_0$ doesn't pass through the vertices, we get two cases.
If $k_1+k_2+1=2k_0-1$,  then $l_1+l_2+1= 2 l_0$, 
and $P_0$ has the coordinates $( l_0, (2k_0 -1) \frac{\sqrt{3}}{2} ) $.
If  $k_1+k_2+1=2k_0$, then $l_1+l_2+1= 2 l_0-1$ and   the coordinates of $P_0$ are $ ( l_0-\frac{1}{2}, k_0 \sqrt{3} )$.
In all this cases the point $P_0$  is the midpoint of some edge.

3) The point  $P_1$  belongs to the line $A_1A_2$ and
the point $P_2$   belongs to the line $A_3A_4$. \\
It follows that  $c_1=2k_1$, $c_2=2k_2+1$, and
 $x_1=l_1$, $x_2=l_2+\frac{1}{2}$, where $ k_1, k_2, l_1, l_2 $ are integers.
The coordinates of $P_0$ are  
$\left(  \frac{l_1+l_2}{2}+\frac{1}{4}, (k_1+k_2+\frac{1}{2}) \frac{\sqrt{3}}{2} \right) $.
We get that $P_0$  is also the center of some edge.

Finally, we found the closed geodesic  $\gamma_0$ equivalent to $\gamma$
such that $\gamma_0$  passes through the midpoint of some edge.
 As we proved before, from this it follows
 that  $\gamma_0$ passes  through the midpoints of two pairs of the opposite edges on the tetrahedron.
This completes the proof.

\end{proof}

\begin{cor}\label{corparts1}
The midpoints of two pairs of the opposite edges partition the geodesic $\gamma_0$ into four equals parts.
\end{cor}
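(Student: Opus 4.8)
The plan is to read off the conclusion directly from the coordinate description of $\gamma_0$ established in the proof of Lemma~\ref{centreqv}. On the development, $\gamma_0$ is the straight segment joining $X_1=(\tfrac12,0)$ to $X_1'=(q+2p+\tfrac12,\,q\sqrt3)$, where $p,q$ are coprime and $q$ is odd. Since on any development of the tetrahedron along the geodesic the geodesic becomes a straight line, Euclidean length along this segment is proportional to arc length along $\gamma_0$; so it suffices to check that the four midpoints in question cut the segment $X_1X_1'$ into four congruent subsegments.

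First I would recall the four points and their order. The point $X_2$, shown in the proof of Lemma~\ref{centreqv} to be the midpoint of the edge $A_3A_4$, is by construction the midpoint of $X_1X_1'$, hence $|X_1X_2|=|X_2X_1'|=\tfrac12|X_1X_1'|$. Next, $Y_1$ was defined as the midpoint of $X_1X_2$ and $Y_2$ as the midpoint of $X_2X_1'$, and both were shown to be midpoints of edges forming the remaining pair of opposite edges of the tetrahedron. Consequently the points occur along $\gamma_0$ in the cyclic order $X_1,Y_1,X_2,Y_2$, and
\begin{equation}
|X_1Y_1|=|Y_1X_2|=|X_2Y_2|=|Y_2X_1'|=\tfrac14\,|X_1X_1'|. \notag
\end{equation}

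Finally I would translate this back to the tetrahedron: $X_1$ and $X_1'$ represent the same point (the midpoint of $A_1A_2$), so $X_1X_1'$ represents all of $\gamma_0$, and the midpoints of the two pairs of opposite edges — namely $X_1\sim X_1'$, $X_2$, $Y_1$, $Y_2$ — split $\gamma_0$ into four arcs of equal length $\tfrac14|\gamma_0|$. There is essentially no obstacle here; the only point needing care is to invoke, rather than re-derive, the two facts from Lemma~\ref{centreqv} that $X_2$ bisects $X_1X_1'$ and that $Y_1,Y_2$ are the opposite-edge midpoints bisecting $X_1X_2$ and $X_2X_1'$, and to observe that straightness of the geodesic on the development makes the Euclidean subdivision an arc-length subdivision of $\gamma_0$.
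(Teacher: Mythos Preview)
Your proof is correct and is exactly the argument the paper has in mind: the corollary is stated without proof because it follows immediately from the construction in the proof of Lemma~\ref{centreqv}, where $X_2$ is defined as the midpoint of $X_1X_1'$ and $Y_1$, $Y_2$ as the midpoints of $X_1X_2$ and $X_2X_1'$, all shown to land on edge midpoints. Your observation that straightness of $\gamma_0$ on the development identifies Euclidean subdivision with arc-length subdivision is the only point that needs to be made explicit, and you make it.
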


\begin{cor}\label{corparts2}
The development of  a regular tetrahedron along a closed geodesic consists of   four equal polygons.
Any two adjacent polygons coincide modulo the point reflection with respect to the midpoint of their common edge.
\end{cor}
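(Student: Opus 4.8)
The plan is to reduce the statement to the special geodesic $\gamma_0$ supplied by Lemma \ref{centreqv} and then to exploit the rotational symmetry of the triangular tiling about edge-midpoints. First I would observe that a closed geodesic $\gamma$ on a regular Euclidean tetrahedron is automatically simple, so by Lemma \ref{centreqv} it is equivalent to a geodesic $\gamma_0$ passing through the midpoints of two pairs of opposite edges; since equivalent geodesics cross the same edges in the same order, the chain of triangles forming the development along $\gamma$ coincides with the one for $\gamma_0$, and the partition into polygons concerns only this chain. Hence it suffices to treat $\gamma_0$, whose development I would draw so that it begins and ends at one of the midpoints it meets. By Lemma \ref{centreqv} and Corollary \ref{corparts1}, $\gamma_0$ meets four edges of the tetrahedron at their midpoints $N_1,N_2,N_3,N_4$, occurring in this cyclic order along $\gamma_0$ and cutting it into four arcs of equal length. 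On the development these midpoints lie on four edges $e_1,e_2,e_3,e_4$ of the triangular tiling; deleting their interiors splits the chain of triangles into four polygons $\Pi_1,\Pi_2,\Pi_3,\Pi_4$, where $\Pi_k$ lies between $e_k$ and $e_{k+1}$, so that $\Pi_k$ and $\Pi_{k+1}$ are adjacent along $e_{k+1}$.

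The key step is to note that, for each $i$, the point reflection $\sigma_i$ of the plane in the midpoint $N_i$ of $e_i$ is a symmetry of the labeled triangular tiling. Indeed $\sigma_i$ is the rotation by $\pi$ of the unlabeled tiling about an edge-midpoint; it interchanges the two triangles meeting along $e_i$, which carry the labels of the two faces of the tetrahedron sharing the corresponding edge, so it induces on the four vertex labels the double transposition swapping the endpoints of $e_i$ and swapping the remaining two labels; and since the adjacency rule ``the two faces at an edge $A_aA_b$ are $A_aA_bA_c$ and $A_aA_bA_d$'' is invariant under any relabeling and the labeling of the whole tiling is determined by that of a single triangle, this relabeling propagates consistently over the tiling. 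Moreover $\sigma_i$ maps every line through $N_i$ onto itself; in particular it maps the straight line carrying $\gamma_0$ onto itself, reversing its direction.

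I would then apply this with $i=2$. Since $\sigma_2$ fixes $N_2$ and reverses the geodesic line, it acts on that line by $s\mapsto 2s_2-s$ in the arclength parameter, and by Corollary \ref{corparts1} the arcs $N_1N_2$ and $N_2N_3$ of $\gamma_0$ have equal length, so $\sigma_2(N_1)=N_3$. As $\sigma_2$ preserves the tiling and the geodesic line, and $N_1,N_3$ are interior points of $e_1,e_3$, it sends $e_1$ onto $e_3$ while fixing $e_2$, and hence carries $\Pi_1$ (between $e_1$ and $e_2$) onto $\Pi_2$ (between $e_2$ and $e_3$). Thus $\Pi_2=\sigma_2(\Pi_1)$, i.e.\ the adjacent polygons $\Pi_1$ and $\Pi_2$ coincide modulo the point reflection in the midpoint of their common edge. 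The same argument with $\sigma_3$ and $\sigma_4$ yields $\Pi_3=\sigma_3(\Pi_2)$ and $\Pi_4=\sigma_4(\Pi_3)$; since point reflections are isometries, the four polygons $\Pi_1,\Pi_2,\Pi_3,\Pi_4$ are equal, which together with these relations is the full assertion of the corollary.

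I expect the main obstacle to be the key step above: one has to be sure that reflection in an edge-midpoint respects not merely the geometry of the triangular tiling but also the labeling used to build the development, that is, that the double transposition it induces on the four labels is globally consistent. This rests on the invariance of the adjacency rule under relabeling and on the fact that one triangle's labels determine all the rest; once it is established, the remaining steps are routine bookkeeping with Corollary \ref{corparts1}.
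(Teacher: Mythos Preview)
Your proof is correct. Both you and the paper exploit the half-turn symmetry at the edge midpoints through which $\gamma_0$ passes, but you implement it differently. The paper works on the tetrahedron in three-space: it uses the rotations by $\pi$ about the two axes joining opposite edge-midpoints (through $X_1,X_2$ and through $Y_1,Y_2$). Each such rotation is manifestly an isometry of the tetrahedron, fixes the midpoints on its axis, swaps the other two midpoints, and therefore sends $\gamma_0$ to itself while exchanging the four arcs in pairs; from this the central symmetry of the developed pieces at each midpoint follows immediately. You instead stay in the plane and use point reflections of the labeled triangular tiling at the midpoints $N_i$, combined with Corollary~\ref{corparts1} to match endpoints. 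The price of your two-dimensional route is the verification you flag as the ``key step'': that a point reflection at an edge-midpoint of the tiling respects the four-label structure (inducing a double transposition), so that it really sends the chain of triangles crossed by $\gamma_0$ to itself. The paper avoids this check because its rotation is an isometry of the tetrahedron by construction, and the compatibility with the development is automatic. Conversely, your argument never leaves the plane and makes the mechanism $\Pi_{k+1}=\sigma_{k+1}(\Pi_k)$ completely explicit, which is convenient when the same combinatorial development is later rebuilt from hyperbolic triangles.
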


\begin{proof}
For any  closed geodesic $\gamma$ we get the equivalent closed geodesic $\gamma_0$
 that passes  through the midpoints of two pairs of the opposite edges on the tetrahedron.
Let the points $X_1$, $X_2$  and $Y_1$, $Y_2$ be respectively the midpoints 
of the edges $A_1A_2$, $A_4A_3$  and $A_2A_3$, $A_1A_4$.
Assume $\gamma_0$ passes through these points (Figure \ref{vsdev}).
\begin{figure}[h!]
\begin{center}
\includegraphics[width=120mm]{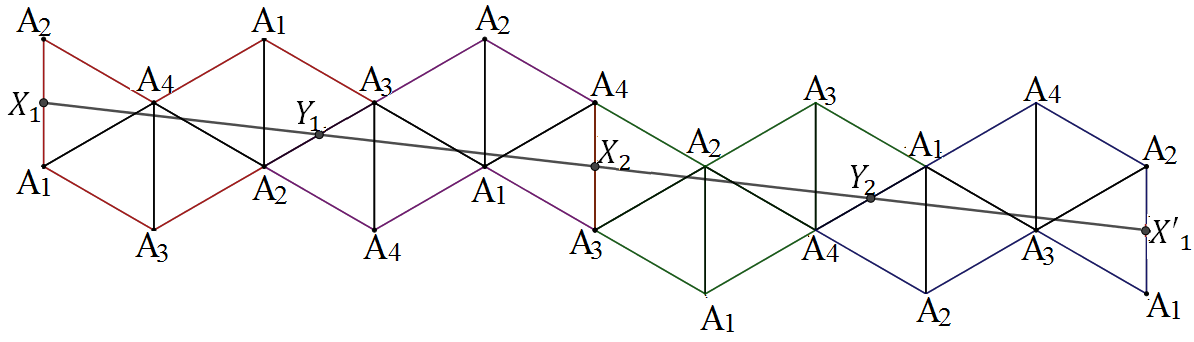}
\caption{  }
\label{vsdev}
\end{center}
\end{figure}

Consider the rotation of  the regular tetrahedron  by the angle  $\pi$  around the line passing through the points $X_1$ and $X_2$.
This rotation is the isometry of the regular tetrahedron.
The point $Y_1$ is mapped to the point $Y_2$.
Furthermore the segment of $\gamma_0$ that starts at $X_1$ on the face  $A_1A_2A_4$  
is mapped to the segment  of $\gamma_0$ that starts from the point $X_1$ on  $A_1A_2A_3$. 
It follows that the  segment $X_1Y_1$ of the geodesic is mapped to the  segment $X_1Y_2$.
For the same reason after the rotation the    segments $X_2Y_1$ and $X_2Y_2$ of  $\gamma_0$
swap.

From this rotation we get that the development of the tetrahedron along the segment $Y_1X_1Y_2$ of the geodesic is 
central-symmetric  with respect to  the point $X_1$.
And the development of the tetrahedron through the segment $Y_1X_2Y_2$ is 
central-symmetric  with respect to  $X_2$.

Now consider the rotation of the regular tetrahedron by the angle $\pi$
around the line passing through the point $Y_1$ and $Y_2$.
For the same reason we obtain that the development of the tetrahedron along the segment $X_1Y_1X_2$ of geodesic is 
central-symmetric  with respect to   $Y_1$,
and the development along the segment $X_1Y_2X_2$ is 
central-symmetric  with respect to  $Y_2$.

\end{proof}

\section{ Properties of a closed geodesics on regular tetrahedra   in   Lobachevsky space}

Let us introduce some  notaions following \cite{Pro07}.
A {\it broken line} on a tetrahedron is a curve 
that consists of the   line segments connecting points on the edges of this tetrahedron in consecutine order.
A broken line on a tetrahedron is called a {\it  generalized  geodesic} if it is closed and \\
(1) has not self-intersection points, \\
(2) passes through more than three edges on the tetrahedron and doesn't pass through its vertices, \\
(3) segments  incident to the same point on the edge belong to the adjacent faces of the tetrahedron.
 
\begin{lemma}\label{allgeod}
\textnormal{(V. Yu. Protasov~\cite{Pro07})}
For every  generalized  geodesic  on a tetrahedron  in   Euclidean space 
there exists a simple closed geodesic on a regular tetrahedron 
 in Euclidean space that is equivalent to this  generalized  geodesic.
\end{lemma}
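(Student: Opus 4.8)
The strategy is to pass to the triangular tiling of the Euclidean plane, exactly as in Section 4, and to realize the combinatorial pattern of $\gamma$ by one of the lines $y=\frac{q\sqrt3}{q+2p}(x-\mu)$ studied there. Note first that being \emph{equivalent} depends only on the cyclic sequence $(e_1,\dots,e_n)$ of edges crossed by the generalized geodesic $\gamma$; the tetrahedron actually carrying $\gamma$ is therefore irrelevant, and I may instead develop the \emph{regular} tetrahedron $R$ along this sequence. Since the faces of $R$ are regular triangles, this development embeds as a strip $\Pi$ of tiles of the standard triangular tiling; labelling the tiling vertices $A_1,\dots,A_4$ consistently with $R$ (as in the proof of Lemma \ref{centreqv}), the terminal edge $\tilde e_n$ of $\Pi$ is a labelled copy of the initial edge $\tilde e_0$. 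Hence the lemma reduces to producing a straight segment $\sigma\subset\Pi$ joining a point $X\in\tilde e_0$ to the corresponding point $X'\in\tilde e_n$, avoiding all tiling vertices and crossing $\tilde e_1,\dots,\tilde e_{n-1}$ once each in order: by the results of Section 4 every closed geodesic on $R$ is simple, so such a $\sigma$ descends to a simple closed geodesic on $R$ equivalent to $\gamma$.

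The first substantive step is to show that $\gamma$ separates the four vertices of the tetrahedron into two pairs. This is where the hypotheses are used: a simple closed curve surrounding a single vertex $A_i$ must cross the three edges incident to $A_i$, and if it crosses more than these three edges then, developing the trihedral angle at $A_i$ (which in the Euclidean case is a straight angle), it is forced either to pass through $A_i$ or to acquire a self-intersection point, contradicting the definition of a generalized geodesic. Consequently $\gamma$ encloses an even number of vertices, its holonomy is trivial, and the identification $\tilde e_n\to\tilde e_0$ in the development is a translation rather than a point reflection; in the coordinates of Lemma \ref{centreqv} this forces $\tilde e_n=\tilde e_0+(q+2p,\,q\sqrt3)$ for integers $p,q\ge 0$, and since a simple closed curve is primitive we may take $\gcd(p,q)=1$, with $(p,q)\neq(0,0)$ because $\gamma$ crosses more than three edges.

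It remains to locate $\sigma$ inside $\Pi$. Repeating the argument from the proof of Lemma \ref{centreqv}, the two lines of slope $\frac{q\sqrt3}{q+2p}$ through the tiling vertices bordering the strip, say $\gamma_1$ and $\gamma_2$ with intercepts $\mu_1<\mu_2$, bound an open family of parallel lines $\ell_\mu$, $\mu_1<\mu<\mu_2$, all of which pass through one and the same sequence of tiles; that sequence must be $\Pi$, because $\Pi$, being the development of $R$ along the \emph{simple} closed curve $\gamma$, and the family $\{\ell_\mu\}$ determine the same isotopy class of transversal in the punctured tiling. Choosing $\mu\in(\mu_1,\mu_2)$ so that $\ell_\mu$ meets no tiling vertex (all but finitely many values), the segment $\sigma=\ell_\mu\cap\Pi$ is the required one, and it is a subarc of the $(p,q)$-geodesic on $R$ constructed in Section 4; this proves the lemma. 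The main obstacle is precisely the combinatorial content packaged into the last two steps — proving that the edge-pattern of an arbitrary simple closed broken line on an arbitrary tetrahedron is already the edge-pattern of a straight line of the prescribed slope in the triangular tiling — and making this rigorous requires a careful bookkeeping of admissible edge sequences (equivalently, that no tiling vertex can lie "between" the unrolled $\gamma$ and the chord), which I expect to occupy the bulk of the proof.
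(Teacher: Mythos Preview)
The paper does not give its own proof of this lemma: it is quoted from Protasov~\cite{Pro07} and used as a black box, so there is nothing in-paper to compare against.

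Your outline is the natural one and matches the shape of Protasov's argument. One point needs tightening: your reason for excluding the $1{+}3$ separation invokes the trihedral angle being ``a straight angle'', but $\gamma$ lives on an \emph{arbitrary} tetrahedron where the cone angle at a vertex need not be $\pi$, and on $R$ you have so far only an edge sequence, not a curve to develop. The fix is purely combinatorial and handles $0{+}4$ and $1{+}3$ at once: if one complementary disk $D$ of $\gamma$ contains at most one vertex, take an innermost edge-arc in $D$ with both endpoints on $\gamma$; the corresponding arc of $\gamma$ is then a single segment in one face with both endpoints on the same edge of that face, which is impossible for a straight segment in a triangle. Hence only the three edge-stubs from the single enclosed vertex can lie in $D$, so $\gamma$ crosses at most three edges, contradicting condition~(2). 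This yields the $2{+}2$ separation and hence translation holonomy on $R$ without any metric input.

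You are right that the real work is step~3, showing that the unrolled strip $\Pi$ contains a straight chord from $\tilde e_0$ to its translate. Your ``same isotopy class'' remark is not yet a proof: two simple arcs with the same endpoints in the punctured tiling can traverse different tile sequences, so one must actually show that no tiling vertex lies strictly between the lifted broken line and the chord. This is exactly the combinatorial lemma that Protasov establishes in~\cite{Pro07}, and your proposal correctly flags it as the bulk of the argument.
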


We obtain the analog of Lemma 4.1 for hyperbolic space.
\begin{lemma}\label{middle}
A simple closed geodesic on a regular tetrahedron in   Lobachevsky space passes 
through the midpoints of two pairs of the opposite edges on the tetrahedron. 
\end{lemma}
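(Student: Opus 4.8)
The plan is to reduce the assertion to its Euclidean analogue, Lemma~\ref{centreqv}, through the combinatorics of edge crossings, and then to promote the symmetry this gives for the equivalence class into an honest isometric symmetry of $\gamma$ itself. First I would check that a simple closed geodesic $\gamma$ on a regular hyperbolic tetrahedron meets more than three distinct edges. Three edges of the tetrahedron form either a triangle (the boundary of a face), a path, or a claw (three edges at a common vertex); in the first two cases some face would have exactly one of its three edges met by $\gamma$, which forces an arc of $\gamma$ inside a triangular face to have both endpoints on one side, an impossibility; in the claw case $\gamma$ must cycle through the three faces at that vertex, producing a crossing sequence of the form $e,e',e'',e,\dots$, and then $\gamma$ self-intersects by the lemma above (a geodesic meeting three edges at a common vertex, one of them twice, is self-intersecting). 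Hence, passing to the Cayley--Klein model, where the hyperbolic tetrahedron is a regular Euclidean tetrahedron and $\gamma$ becomes a closed polygonal curve that is straight inside each face, has no self-intersections, avoids the vertices and crosses the same edges in the same cyclic order, we obtain a generalized geodesic on the regular Euclidean tetrahedron. By Protasov's Lemma~\ref{allgeod} there is a simple closed geodesic $\gamma_E$ on the regular Euclidean tetrahedron equivalent to it, and by Lemma~\ref{centreqv} a further equivalent Euclidean geodesic $\gamma_{E,0}$ through the midpoints of two pairs of opposite edges. Since equivalence means having the same cyclic edge sequence, $\gamma$, $\gamma_E$ and $\gamma_{E,0}$ all share one cyclic edge sequence $\Sigma$; in particular they meet each pair of opposite edges the same number of times, and these numbers are $p,p,q,q,p+q,p+q$ for some coprime integers $p,q\ge 0$, as in the proof of Lemma~\ref{centreqv}.

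The three double transpositions in $S_4$ are realized as isometries $\phi_1,\phi_2,\phi_3$ of the regular tetrahedron, in Lobachevsky space just as in Euclidean space: $\phi_i$ is the rotation by $\pi$ about the common perpendicular of one pair of opposite edges, it fixes the midpoint of each of those two edges, and it acts on each of them as the reflection in that midpoint. By Corollary~\ref{corparts2}, $\gamma_{E,0}$ is invariant under the two $\phi_i$ that fix the midpoints it passes through, hence under all three $\phi_i$, so each $\phi_i$ carries the cyclic sequence $\Sigma$ onto itself up to a rotation and a reversal. Therefore $\phi_i(\gamma)$ is again a simple closed geodesic on the hyperbolic tetrahedron with edge sequence $\Sigma$. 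The one genuinely hyperbolic ingredient enters here: a simple closed geodesic in Lobachevsky space is determined by its equivalence class. Developing the tetrahedron along $\Sigma$ and composing the successive face identifications produces an isometry $g$ of $\mathbb{H}^2$, depending only on $\Sigma$, taking the first developed face to the last; the developed geodesic runs from a point $X$ of the initial edge to $g(X)$, and, prolonging it, the entire geodesic line through $X$ and $g(X)$ is $g$-invariant, so $g$ is a hyperbolic translation or a glide reflection and that line is its unique axis. Thus the closed geodesic with edge sequence $\Sigma$ is unique, and so $\phi_i(\gamma)=\gamma$ for $i=1,2,3$.

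It remains to read off the midpoints. Fix a pair of opposite edges, say $A_1A_2$ and $A_3A_4$, with associated involution $\phi=\phi_1$. The finite set $\gamma\cap A_1A_2$ is preserved by $\phi|_{A_1A_2}$, the reflection in the midpoint $M_{12}$; if $\gamma$ meets $A_1A_2$ an odd number of times, this set must contain $M_{12}$, and likewise $M_{34}\in\gamma$. The two edges of a pair are met the same number of times because the $\phi_j$ that interchanges them also preserves $\gamma$; and among the three crossing numbers $p,q,p+q$ with $\gcd(p,q)=1$ at least two are odd, the only possible parities of $(p,q,p+q)$ being $(o,o,e)$, $(o,e,o)$, $(e,o,o)$. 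Hence for each of two pairs of opposite edges the midpoints of both edges lie on $\gamma$, which is the claim.

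The heart of the argument is the uniqueness of a simple closed geodesic within its equivalence class in Lobachevsky space, invoked in the second step to convert the Euclidean symmetry of the edge sequence into a symmetry of $\gamma$: this is exactly where hyperbolicity is essential, because the unrolling isometry $g$ has a single axis, whereas in Euclidean space the corresponding unrolling is a translation with no distinguished axis --- which is why there the equivalence class is a one-parameter family while here it collapses to a single geodesic. The only other point requiring attention is the elementary verification in the first step that $\gamma$ crosses at least four edges, which is what makes Protasov's lemma applicable.
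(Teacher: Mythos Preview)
Your proof is correct, but it takes a genuinely different route from the paper's. The paper argues directly on the development: it unrolls the hyperbolic tetrahedron along $\gamma$, observes (via the Euclidean model and Corollary~\ref{corparts2}) that this development is centrally symmetric about the midpoint $M_2$ of the edge corresponding to $\tilde X_2$, and then applies the rotation by $\pi$ about $M_2$ to produce a hyperbolic quadrilateral $X_1X_2X_2^1X_1^1$ whose interior angles sum to $2\pi$; Gauss--Bonnet forces this quadrilateral to be degenerate, so the crossing points $X_1,X_2$ coincide with the midpoints. Your argument instead establishes the full Klein four-group symmetry $\phi_i(\gamma)=\gamma$ by combining the $\phi_i$-invariance of the edge sequence $\Sigma$ (read off from the Euclidean $\gamma_{E,0}$) with uniqueness of a hyperbolic closed geodesic in its equivalence class, which you prove independently via the holonomy isometry $g$ and the uniqueness of its axis; the midpoints then drop out of a parity count on $(p,q,p+q)$. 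Your uniqueness argument is actually cleaner than the paper's later Gauss--Bonnet proof of Lemma~\ref{Uniqueness}, and your approach yields the stronger conclusion that $\gamma$ is invariant under all three involutions, not just that it hits four midpoints. The paper's approach, on the other hand, is self-contained for this lemma and does not need to anticipate the uniqueness statement.

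One small point: in your opening verification that $\gamma$ is a generalized geodesic, the claw case with \emph{exactly} three crossings (once around the vertex) is not covered by the self-intersection lemma you cite, which needs a fourth crossing. This is easy to patch---develop the three faces at the vertex and note that the base angles of the resulting isoceles triangle are strictly less than $\pi/2$, so the closing-up angle condition fails---and the paper itself simply asserts that $\gamma$ is a generalized geodesic without checking condition~(2) at all.
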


\begin{proof}

Assume $\gamma$  is a simple closed geodesic on a regular tetrahedron $A_1A_2A_3A_4$
in   Lobachevsky space.
In the  Cayley–Klein model of  this space 
 the  regular tetrahedron   is represented by  a  regular tetrahedron in  Euclidean space as described in Section $2$.
Then the geodesic  $\gamma$  is a  generalized  geodesic on the regular tetrahedron in  Euclidean space.
From Lemma \ref{allgeod} we get that this  generalized   geodesic
is  equivalent to a closed geodesic  $\tilde\gamma$ on  the regular tetrahedron  in  Euclidean space.
 From Lemma \ref{centreqv} we assume that  $\tilde\gamma$ 
  passes through the midpoints of two pairs of the opposite edges on this tetrahedron. 
 
 Suppose  $\tilde\gamma$  passes through  the midpoints  $\tilde X_1$, $\tilde X_2$ of the edges $A_1A_2$ and $A_3A_4$.
 Let $X_1$, $X_2$  be the corresponding points on   $\gamma$.
 Consider the development $T$ of the regular tetrahedron in hyperbolic space
  along  $\gamma$ from the point $X_1$.
Then $\gamma$ is a line segment $X_1X^1_1$  on the development.

Likewise  consider the development of the  regular tetrahedron  in  Euclidean space  
along  $ \tilde \gamma$ from   $ \tilde X_1$.
From Corollary \ref{corparts2} it follows that this development is central-symmetric with respect to the point $\tilde  X_2$.

On the  regular tetrahedron in   Lobachevsky space 
denote by $M_1$, $M_2$  respectively the midpoints of the edges $A_1A_2$ and $A_3A_4$.
Consider the rotation of  the  tetrahedron  by the angle  $\pi$  around the line passing through the points  $M_1$ and $M_2$.
Since this rotation is the isometry of the  tetrahedron then the development of this  tetrahedron is  central symmetric
with respect to the point $M_2$.
Denote by $T_1$ and  $T_2$ respectively  the parts of the development along   segments $X_1X_2$ and $X_2X^1_1$.

Consider the rotation by the angle  $\pi$  around the point $M_2$.
Then the part $T_1$ coincides with $T_2$.
The edge $A_1A_2$ containing the point $X^1_1$ is mapped to the edge $A_2A_1$ with the point  $X_1$.
It follows that  the point  $X^1_1$  belongs to the edge $A_1A_2$ of the $T_1$, and the lengths of 
$A_2X_1$ and $X^1_1A_1$ are equal.

The edge $A_3A_4$ rotates to  itself but with the opposite orientation.
It follows that the point $X_2$ of the part  $T_2$   is mapped to the point $X^1_2$ of $T_1$ such that 
the  lengths of $A_4X_2$ and  $X^1_2A_3$ are equal.
Moreover,  $\angle X_1X_2A_4 = \angle X^1_1X^1_2A_4$.
Since the geodesic is closed, then $\angle A_1X_1X_2 = \angle A_1X^1_1X^1_2$  (Figure \ref{newproof}).
\begin{figure}[h]
\centering{
\includegraphics[width=90mm]{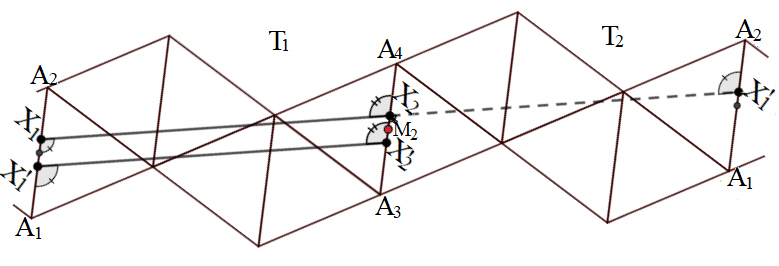} 
}
\caption{ }
\label{newproof}
\end{figure}

 We obtain the quadrilateral  $X_1X_2X^1_2X^1_1$ on $T_1$ and 
the sum of its interior angles is equal to $2\pi$.
From Gauss-Bonnet  theorem  we get that
  the   integral of  the Gaussian curvature of   Lobachevsky plane 
   over the  interior  of  $X_1X_2X^1_2X^1_1$ is equal to zero.
It follows that after the rotation the part $X^1_2X^1_1$ of the geodesic coincides with $X_1X_2$. 
Hence the points $X_1$ and $X_2$ are the midpoints of the edges (Figure \ref{newproof}).

In the same way we can proof  that   $\gamma$ passes through  the midpoints of   another two opposite edges.
This completes the proof.
\end{proof}

If $\tilde{\gamma}$ is a closed geodesic on a regular tetrahedron  in   Euclidean space,
 then there exist infinetly many closed geodesics equivalent to $\tilde{\gamma}$ \cite{FucFuc07}. 
It is not true in Lobachevsky space. 
\begin{lemma}\label{Uniqueness}
If two closed geodesic on the regular tetrahedron in   Lobachevsky space are equivalent,
then they coincide.
\end{lemma}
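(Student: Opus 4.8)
The plan is to recover the whole equivalence class of a simple closed geodesic from the holonomy of its development, using the fact that a nontrivial orientation-preserving isometry of the hyperbolic plane which leaves some geodesic invariant is a hyperbolic translation, and that the axis of such a translation is its only invariant geodesic.

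First I would fix two equivalent simple closed geodesics $\gamma_1$ and $\gamma_2$ on a regular tetrahedron $T$ in Lobachevsky space, choose an edge $e$ that $\gamma_1$ crosses together with a crossing point $p \in e$, and develop $T$ along $\gamma_1$ starting from $p$. This unfolds the cyclic sequence of faces met by $\gamma_1$ into a chain of triangles in the hyperbolic plane, inside which $\gamma_1$ becomes a straight segment; since $\gamma_1$ is closed, the chain begins and ends with two copies $\tilde e$ and $\tilde e'$ of one and the same edge $e$ of $T$, identified (as on $T$) by the restriction of an orientation-preserving isometry $g$ of the hyperbolic plane (orientation-preserving because $T$ is orientable). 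Writing $\tilde x \in \tilde e$ for the copy of $p$, the geodesic $\gamma_1$ develops to the segment $[\tilde x, \, g\tilde x]$, and applying the powers of $g$ extends this segment to a complete geodesic $\tilde\gamma_1$ of the hyperbolic plane with $g(\tilde\gamma_1) = \tilde\gamma_1$. Moreover $g \neq \mathrm{id}$, for otherwise $\tilde x = g\tilde x$ and $\gamma_1$ would be a single point.

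The crucial step is to identify $g$ as a hyperbolic translation. Being orientation-preserving, $g$ is elliptic, parabolic, or hyperbolic. A parabolic isometry of the hyperbolic plane leaves no geodesic invariant, and a nontrivial elliptic isometry leaves a geodesic invariant only when it is the rotation by $\pi$ about a point of that geodesic; but such a rotation has order two, so if $g$ were of this kind, then $\gamma_1$ traversed twice, whose holonomy is $g^2 = \mathrm{id}$, would develop to the segment $[\tilde x, g^2\tilde x] = [\tilde x, \tilde x]$, a point --- absurd, since this geodesic has positive length. Hence $g$ is a nontrivial hyperbolic translation and $\tilde\gamma_1$ is its axis, the unique $g$-invariant complete geodesic of the hyperbolic plane. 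Now I would run the same construction for $\gamma_2$: being equivalent to $\gamma_1$ it crosses the same cyclic sequence of edges, so, developed starting from the corresponding crossing of $e$, it unfolds the same chain of triangles with the same gluing isometry $g$; therefore $\tilde\gamma_2$ is again a complete $g$-invariant geodesic, so $\tilde\gamma_2 = \tilde\gamma_1$, and projecting the developments back onto $T$ yields $\gamma_1 = \gamma_2$.

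The main obstacle is precisely this classification of $g$: one has to exclude the parabolic and the order-two elliptic possibilities in order to conclude that $g$ owns a \emph{unique} invariant geodesic. This is exactly where negative curvature is indispensable, and it is the source of the contrast with Euclidean space, where the corresponding holonomy is a translation of the Euclidean plane, every line parallel to it is invariant, and the equivalence class of a closed geodesic is accordingly infinite. One can also argue purely metrically with Lemma~\ref{convexdist}: if $\gamma_1 \neq \gamma_2$, develop both into the common chain as segments $[\tilde x_1, g\tilde x_1]$ and $[\tilde x_2, g\tilde x_2]$; by Lemma~\ref{convexdist} the length of the segment joining the point dividing $[\tilde x_1, \tilde x_2]$ in ratio $t$ to the point dividing $[g\tilde x_1, g\tilde x_2]$ in ratio $t$ is a convex function of $t$, while its first variation vanishes at $t = 0$ and $t = 1$ because $\gamma_1$ and $\gamma_2$ are closed geodesics, so this length is constant on $[0,1]$ --- impossible by the strict convexity of the distance between two distinct geodesics of the hyperbolic plane.
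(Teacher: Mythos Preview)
Your argument is correct, and it takes a genuinely different route from the paper. The paper develops the tetrahedron along $\gamma_1$ to obtain a straight segment $XX'$, notes that an equivalent $\gamma_2$ develops inside the \emph{same} polygon to a segment $YY'$, argues by orientability (a M\"obius-strip contradiction) that $XX'$ and $YY'$ cannot cross, and then applies Gauss--Bonnet to the quadrilateral $XYY'X'$: the closedness conditions $\angle XX'A_1=\angle X'XA_2$ and $\angle YY'A_1=\angle Y'YA_2$ force the interior angle sum to be $2\pi$, so the enclosed hyperbolic area is zero and the two segments coincide.

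Your main argument replaces this area computation by the classification of orientation-preserving isometries of $\mathbb H^2$: the gluing map $g$ of the development has an invariant geodesic (namely $\tilde\gamma_1$), hence cannot be parabolic; the closedness of $\gamma_1$ forces $g$ to preserve the orientation of $\tilde\gamma_1$, which rules out the order-two elliptic case; so $g$ is hyperbolic and $\tilde\gamma_1$ is its unique axis, whence $\tilde\gamma_2=\tilde\gamma_1$. This is more conceptual and sidesteps both the M\"obius-strip step and the angle bookkeeping; it also makes transparent why the Euclidean case is different (the holonomy there is a translation, with infinitely many invariant lines). Your alternative convexity sketch is the closest in spirit to the paper's proof---the statement ``convex on $[0,1]$ with vanishing first variation at both ends, yet strictly convex in negative curvature'' is essentially the differential form of the paper's ``angle sum $2\pi$ but negative curvature'' Gauss--Bonnet contradiction. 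One small remark: you open by fixing \emph{simple} closed geodesics, but neither your argument nor the paper's uses simplicity; the lemma, as stated, is about arbitrary closed geodesics, and your holonomy proof already covers that generality.
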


\begin{proof}
Let $\gamma_1$  be a   closed geodesic   on a regular tetrhedron in  Lobachevsky space.
Consider the development of this tetrhedron along $\gamma_1$ from a point $X$ of the edge $A_1A_2$. 
Then $\gamma_1$  corresponds to the  segment $XX'$ on the development and  $\angle XX'A_1 = \angle X'XA_2$ 
 (Figure \ref{uniq}).

Suppose $\gamma_2$ is a closed geodesic equivalent  to $\gamma_1$ and $\gamma_2$ does not coinside with $\gamma_1$.
Then $\gamma_2$ starts at the point $Y$ on the edge $A_1A_2$ and $Y$ differs from $X$.
Since $\gamma_2$ and $\gamma_1$  pass through the same edges in the same order, 
then $\gamma_2$ corresponds to the   segment $YY'$ on the same development and  $\angle YY'A_1 = \angle Y'YA_2$.
\begin{figure}[h]
\begin{center}
\begin{minipage}[h]{0.4\linewidth}
\includegraphics[width=75mm]{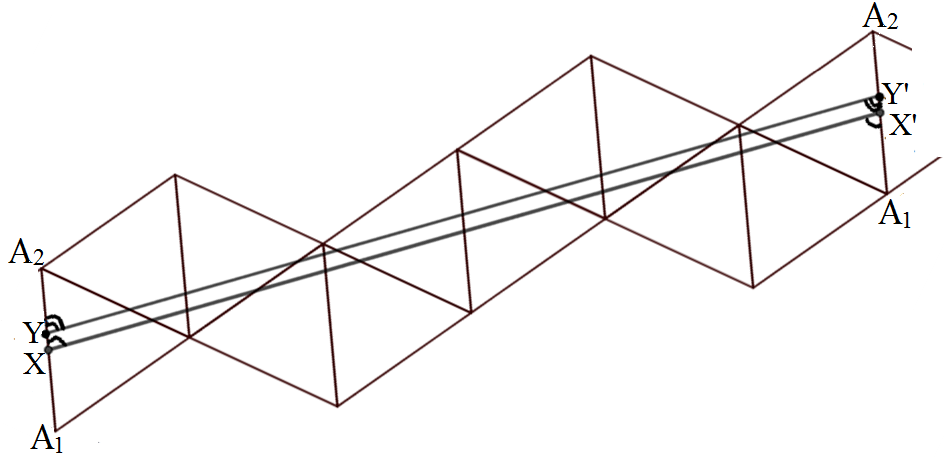} 
\caption{ }
\label{uniq}
\end{minipage}
\hfill 
\begin{minipage}[h]{0.5\linewidth}
\includegraphics[width=70mm]{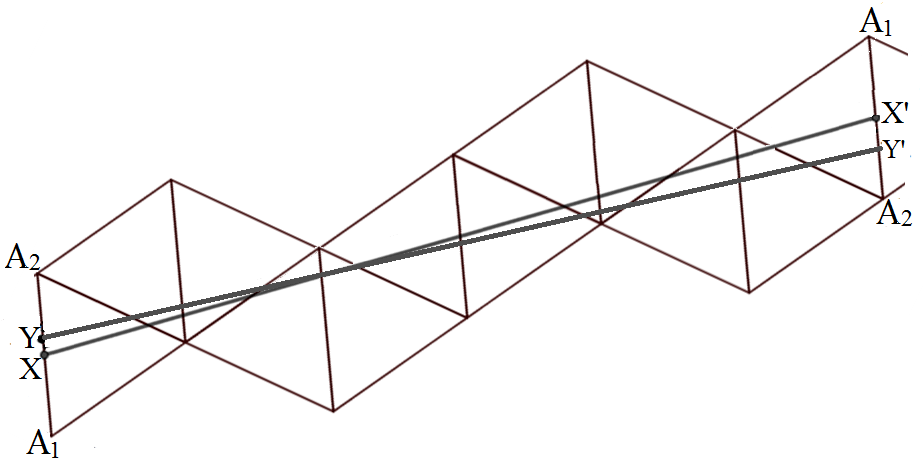}
\caption{  }
\label{mobius}
\end{minipage}
\end{center}
\end{figure}

Assume that    $YY'$ intersects $XX'$ on the development.
It follows that the edge with the points $X$ and $Y$ and the edge with the points $X'$ and $Y'$ have the opposite orientation.
Since the edges with this points correspond to the edge $A_1A_2$ on the tetrahedron,
 we obtain that  the tetrahedron contains a Mobius strip  (Figure \ref{mobius}).
 It contradicts the orientation of the tetrahedron.
 
 Thus the segments $YY'$ and $XX'$ do not intersect.
 We get a  quadrilateral $XX'YY'$ (Figure \ref{uniq}).
 The sum of its interior angles is equal to $2\pi$.
As before from Gauss-Bonnet theorem   it follows that  $XX'$ and $YY'$  are coincident.
It follows that the closed geodesic $\gamma_2$ coincides with the closed geodesic  $\gamma_1$.

\end{proof}

\section{Proof of Theorem 1}\label{nmstructure}

\begin{lemma}\label{pq}\textnormal{(V. Yu. Protasov~\cite{Pro07})}A simple closed geodesic $\gamma$ on a regular tetrahedron in Euclidean space is 
uniquely characterized by the two coprime integers  $(p,q)$   such that 
there are $p$ points of $\gamma$ on   each of two opposite edges of the tetrahedron,
$q$ points of $\gamma$ on   each of another two opposite edges,
and there are $(p+q)$ points of $\gamma$ on    each edges of the third pair of opposite edges.
\end{lemma}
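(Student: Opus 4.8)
The plan is to run the whole argument inside the triangular tiling model of Section~4. Recall from there that the development of the regular Euclidean tetrahedron along a closed geodesic is a piece of the unit regular triangular tiling of the plane, that its vertices carry the labels $A_1,A_2,A_3,A_4$, and that the three parallel classes of tiling segments represent the three pairs of opposite edges of the tetrahedron; in the coordinates of Section~4 the horizontal segments lying on the lines $y=k\sqrt3$ and on the lines $y=(2k+1)\tfrac{\sqrt3}{2}$ represent the opposite pair $A_1A_2$, $A_3A_4$, while the two families of segments of slopes $+\sqrt3$ and $-\sqrt3$ represent the two remaining opposite pairs. By Lemma~\ref{centreqv} I may assume that $\gamma$ already passes through the midpoints of two pairs of opposite edges, so that its development is the segment $PP'$ joining $P=\bigl(\tfrac12,0\bigr)$ to $P'=\bigl(\tfrac12+q+2p,\;q\sqrt3\bigr)$ for uniquely determined nonnegative integers $p$ and $q$; the requirement that $PP'$ and its parallel translates miss every vertex of the tiling forces $\gcd(p,q)=1$, exactly as in the proof of Lemma~\ref{centreqv} and in \cite{FucFuc07}.

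The heart of the matter is to count, one parallel class at a time, how many tiling segments of that class are crossed by $PP'$; since $PP'$ avoids all vertices, each meeting of $PP'$ with a tiling line is a meeting with exactly one edge of the corresponding class. Rising from $y=0$ to $y=q\sqrt3$, the segment crosses the $2q-1$ intermediate horizontal lines $y=\tfrac{m\sqrt3}{2}$, $1\le m\le 2q-1$, while its two endpoints lie on horizontal lines of $A_1A_2$-type and represent a single point of $\gamma$ on $A_1A_2$; since these lines alternate between $A_1A_2$-type ($m$ even) and $A_3A_4$-type ($m$ odd), this yields $q$ points of $\gamma$ on $A_1A_2$ and $q$ points on $A_3A_4$. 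For the two slanted classes I would carry out the same bookkeeping with the displacement vector $(q+2p,\;q\sqrt3)$, resolved along the directions orthogonal to the two slope-$\pm\sqrt3$ edge directions: this produces orthogonal displacements $p\sqrt3$ and $(p+q)\sqrt3$, hence, after dividing by the common interline distance $\tfrac{\sqrt3}{2}$, exactly $2p$ and $2(p+q)$ crossings, which split evenly between the two edges of each of those opposite pairs. Therefore the numbers of intersection points of $\gamma$ with the three pairs of opposite edges are $(p,p)$, $(q,q)$ and $(p+q,p+q)$, with $\gcd(p,q)=1$, as claimed.

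It remains to see that $(p,q)$ characterizes $\gamma$. From the three per-pair numbers one recovers the unordered pair $\{p,q\}$ (the largest of the three equals $p+q$), so two simple closed geodesics with the same $(p,q)$ unfold to tiling segments of one and the same slope $\tfrac{q\sqrt3}{q+2p}$; by Lemma~\ref{centreqv} each of them is equivalent to the midpoint geodesic determined by $(p,q)$, so they are equivalent to one another. Conversely, for every coprime pair $(p,q)$ the segment from $\bigl(\tfrac12,0\bigr)$ to $\bigl(\tfrac12+q+2p,\;q\sqrt3\bigr)$ avoids all vertices and unfolds a closed geodesic, which is automatically simple by the observation of Section~4 and which, by the count above, realizes the pair $(p,q)$. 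The one place that really needs care is the counting in the second paragraph: one must verify both that each line family meets $PP'$ in the number of points dictated by the displacement vector and that, within a family representing an opposite pair, these points split evenly between the two edges. Both facts follow from the strict alternation of the vertex (hence edge) labels along each family of tiling lines together with the fact that $PP'$ avoids all vertices, and I would write them out using the explicit coordinates of Section~4.
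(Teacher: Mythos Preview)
The paper does not prove this lemma at all: it is stated with the attribution ``(V.~Yu.~Protasov~\cite{Pro07})'' and no proof follows, the text immediately continuing with the Remark and the discussion of types. So there is nothing to compare your argument against in the paper itself.

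Judged on its own, your approach via the tiling model of Section~4 is sound and is essentially the natural proof. The displacement-vector counting is correct: writing the start and end points in the lattice basis $e_1=(1,0)$, $e_2=(\tfrac12,\tfrac{\sqrt3}{2})$, the segment runs from $\tfrac12 e_1$ to $(\tfrac12+2p)e_1+2q\,e_2$, so its $e_1$-coordinate sweeps through the $2p$ integer values $1,\dots,2p$ (crossings with the slope-$\sqrt3$ family), its $e_2$-coordinate through $2q$ integers (horizontal family), and $a+b$ through $2(p+q)$ integers (slope $-\sqrt3$ family). The even split you flag as needing care is genuinely easy once you note, as you do, that in each family consecutive parallel lines alternate between the two edges of the corresponding opposite pair; since the endpoints sit at half-integer values of the relevant coordinate, the crossed lines are $1,2,\dots,2m$ and split $m$-and-$m$. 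One small point to tidy: your phrase ``uniquely characterized'' should be read as ``up to equivalence and rigid motion of the tetrahedron'', since in the Euclidean case each $(p,q)$ admits a one-parameter family of equivalent geodesics, and the assignment of $p$, $q$, $p+q$ to specific edge pairs is only determined up to a symmetry of the tetrahedron; you implicitly use this when you invoke Lemma~\ref{centreqv} in the last paragraph, and it would be worth saying so explicitly.
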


\begin{remark}
By  two coprime integers  $(p,q)$  we mean two integer numbers $p$, $q$ such that $0<p<q$ and GCD$(p,q)=1$, 
or $p=0$, $q=1$.
\end{remark}

This pair of coprime integers  $(p,q)$   is called a {\it type} of a simple closed geodesic.
Moreover, for any two coprime integers $(p,q)$ there exist infinitely many simple closed geodesics of type $(p,q)$
on a regular tetrahedron in Euclidean space and all of these geodesics are equivalent to each other.
From Lemma \ref{centreqv} it follows that in the set of geodesics of type $(p,q)$
there exists a geodesic $\tilde{\gamma}$ 
passing through the midpoints of two pairs of the opposite edges on the tetrahedron.
Denote by $\tilde X_1$ and $\tilde X_2$ the midpoints of the edges $A_1A_2$ and $A_3A_4$
and by $\tilde Y_1$, $\tilde Y_2$ the midpoints of $A_1A_3$ and $A_2A_4$.
Assume that $\tilde{\gamma}$ passes through $\tilde X_1$, $\tilde X_2$ and $\tilde Y_1$, $\tilde Y_2$.
Consider the development of the tetrahedron along $\tilde{\gamma}$ from the point $\tilde X_1$ 
through the point $\tilde Y_1$ to the point $\tilde X_2$.
It is the polygon $\tilde{T}_1$ on Euclidean plane.
Furthermore, $\tilde{T}_1$ is a central-symmetric with respect to $\tilde Y_1$. 
The interior angles  of $\tilde{T}_1$ are equal to $\frac{\pi}{3}$, or $\frac{2\pi}{3}$, or $\pi$, or $\frac{4\pi}{3}$.
Note that the angle is equal to $\frac{4\pi}{3}$ if and only if 
$\tilde{\gamma}$ intersects successively three edges sharing a common vertex.

Now consider the set of congruent regular triangles with the plane  angle  equal to  $\alpha$ on Lobachevsky plane.
Put these triangles in the same order as we develop the faces of the regular tetrahedron in Euclidean space along 
the  part $\tilde X_1 \tilde Y_1 \tilde X_2$ of  $\tilde{\gamma}$.
In other words, we build a polygon $T_1$ on Lobachevsky plane that consists of the triangles 
ordered in the same way as $\tilde{T}_1$ on Euclidean plane.
Label the vertices of $T_1$ according to the vertices of $\tilde{T}_1$.
Then the polygon $T_1$ corresponds to some development of a regular tetrahedron with 
the plane angles of the faces equal to $\alpha$ in Lobachevsky space.
Moreover, $T_1$ is also central-symmetric with respect to the midpoint of the same edge $A_1A_3$ as the polygon $\tilde{T}_1$.
By construction, the interior angles at the vertices of $T_1$ are equal to
$\alpha$ or $2\alpha$, or $3\alpha$, or $4\alpha$.

Consider the midpoints $ X_1$ and $X_2$ of the edges $A_1A_2$ and $A_3A_4$ on $T_1$
that $ X_1$ and $X_2$ correspond to the points $\tilde X_1$ and $\tilde X_2$ on $\tilde{T}_1$.
Construct the line segment $ X_1X_2$.

Assume $\alpha \in (0, \frac{\pi}{4}]$.
Hence the polygon $T_1$ is convex.
It follows that the segment $ X_1X_2$ belongs to the interior of the polygon $T_1$. 
Moreover, $ X_1X_2$ passes through the center of symmetry of $T_1$.

In the same way consider the development $\tilde T_2$ of the tetrahedron along the second part $\tilde X_2 \tilde Y_2 \tilde X'_1$
of the geodesic $\tilde{\gamma}$ in Euclidean space.
Note that $\tilde T_1$ and $\tilde T_2$   are equal   polygons
(without vertex labeling).
Then consider the second copy  of   $T_1$  on hyperbolic plane, denote it by $T_2$ and label   vertices of  $T_2$ 
according to the vertices on $\tilde T_2$.
Mark by $ X_2$ and $X'_1$ the midpoints of $A_3A_4$ and $A_1A_2$ on $T_2$
so that $ X_2$ and $X'_1$  correspond to the points $\tilde X_2$ and $\tilde X'_1$.
Since $\alpha \in (0, \frac{\pi}{4}]$ then the segment $X_2 X'_1$ also
belongs to the interior of $T$ and passes through its center of symmetry.

These two polygons $T_1$ and $ T_2$ correspond  to the development of a regular tetrahedron with 
the plane angles of the faces equal to $\alpha$ in Lobachevsky space.
Then   $ X_1X_2$ and $ X_2X'_1$ are the geodesic segments on this tetrahedron.
Furthermore,  on the tetrahedron the point $X_2$ of $X_1X_2$ coincide with $X_2$ of $X_2X'_1$ 
and the point $X_1$ of the segment $X_1X_2$ coincide with $X'_1$ of $X_2X'_1$.
Since $T_1$ and $ T_2$   is central symmetric then $\angle X_1X_2A_3 = \angle X'_1X_2A_4$ 
and $\angle X_2X_1A_1 = \angle X_2X'_1A_2$.
The segments $X_1X_2$ and $ X_2X'_1$ are equivalent  to the geodesic segments
$\tilde X_1 \tilde X_2$ and $\tilde X_2 \tilde X'_1$ on the regular tetrahedron in Euclidian space respectively.
Therefore,   $ X_1X_2$ and $ X_2X'_1$ form a simple closed geodesic $\gamma$ 
on the regular tetrahedron with the plane angles equal to $\alpha \in (0, \frac{\pi}{4}]$ in Lobachevsky space.

Increase the angle $\alpha$ from the $ \frac{\pi}{4}$.
Then the polygon $T$ is not convex because it contains the interior angles   equal  to $4\alpha>\pi$.

Let $\alpha_0$ be the supremum of 
 the set of $\alpha$ such that the segment $ X_1X_2$ belongs to the interior of $T_1$.
Assume $\alpha_0< \frac{\pi}{3}$.
For all $\alpha<\alpha_0$ the segment $ X_1X_2$ is a part of a simple closed geodesic  $\gamma$  on 
the regular tetrahedron in Lobachevsky space.
The distance   from  the set of vertices of the  tetrahedron to $\gamma$
satisfies inequality (\ref{distvertex2}).
It follows that there exists 
sufficiently small $\varepsilon$ such that for
$\alpha_1=\alpha_0+\varepsilon$   the segment $ X_1X_2$ belongs to the interior of $T$.
It contradicts to  the supremum of this set.
Then $\alpha_0= \frac{\pi}{3}$.

Finally, we obtain that for all $\alpha \in (0, \frac{\pi}{3})$ the segments $ X_1X_2$ and $X_2 X'_1$ belong to the interior of $T$.
From this it follows

\begin{theorem*}
On a regular tetrahedron in Lobachevsky space for any coprime integers $(p, q)$, $0\le p<q$, 
there exists  unique, up to the rigid motion of the tetrahedron, simple closed geodesic  of type $(p,q)$.
The geodesics of type $(p,q)$ exhaust all simple closed geodesics on a regular tetrahedron in  Lobachevsky space.
\end{theorem*}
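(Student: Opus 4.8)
The plan is to assemble Theorem~1 from the structural lemmas already established. First I would fix a pair of coprime integers $(p,q)$ with $0\le p<q$. By Lemma~\ref{pq}, there is a simple closed geodesic $\tilde\gamma$ of type $(p,q)$ on the regular tetrahedron in Euclidean space, and by Lemma~\ref{centreqv} we may choose $\tilde\gamma$ passing through the midpoints $\tilde X_1,\tilde X_2$ of one pair of opposite edges and $\tilde Y_1,\tilde Y_2$ of another pair. This fixes a combinatorial pattern of edge crossings, which I would transfer to the hyperbolic tetrahedron: build the developments $T_1,T_2$ on the Lobachevsky plane out of congruent regular triangles with plane angle $\alpha$, ordered exactly as the Euclidean developments $\tilde T_1,\tilde T_2$ along the halves $\tilde X_1\tilde Y_1\tilde X_2$ and $\tilde X_2\tilde Y_2\tilde X'_1$. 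Each $T_i$ is central-symmetric about the midpoint of its middle edge, so the segment joining the two images of $X_1,X_2$ (resp.\ $X_2,X'_1$) passes through the center of symmetry, and by the symmetry the two segments glue up on the tetrahedron with matching angles at $X_1,X'_1$ and at $X_2$, i.e.\ they form a closed curve satisfying the geodesic refraction condition at every edge crossing.

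The one thing this construction can fail to produce is an honest geodesic: the segment $X_1X_2$ might exit the polygon $T_1$ (equivalently, the developed line might pass too close to a vertex), in which case the curve on the tetrahedron would hit a vertex or self-intersect. So the core of the argument is the continuity/connectedness step already sketched: for $\alpha\in(0,\pi/4]$ the polygon $T_1$ is convex and $X_1X_2$ lies in its interior automatically; for larger $\alpha$ one takes $\alpha_0$ to be the supremum of those $\alpha$ for which $X_1X_2\subset\operatorname{int}T_1$, and rules out $\alpha_0<\pi/3$ by the vertex-distance bound. Concretely, for every $\alpha<\alpha_0$ the segment is a genuine simple closed geodesic, so Lemma~\ref{nesdist} (Corollary~\ref{nesdistcor}, inequality~(\ref{distvertex2})) gives a uniform lower bound, depending continuously on $\alpha$ and staying positive as $\alpha\uparrow\alpha_0<\pi/3$, on the distance from the developed line to every vertex of $T_1$; this bound persists at $\alpha_0$ and, by continuity of the whole picture in $\alpha$, for a slightly larger $\alpha_0+\varepsilon$ as well, contradicting maximality. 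Hence $\alpha_0=\pi/3$ and the construction yields a simple closed geodesic of type $(p,q)$ for every admissible $\alpha$.

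Uniqueness is then immediate: any simple closed geodesic on the hyperbolic tetrahedron is, by Lemma~\ref{middle}, a curve through the midpoints of two pairs of opposite edges, hence (via Lemma~\ref{allgeod} and Lemma~\ref{pq}) equivalent to a Euclidean simple closed geodesic of some type $(p,q)$, and by Lemma~\ref{Uniqueness} two equivalent closed geodesics on the hyperbolic tetrahedron coincide; so the geodesic of type $(p,q)$ produced above is the only one of its type, up to the rigid motion that identifies the two choices of which pair of opposite edges carries $p$ points. Conversely, every type $(p,q)$ is realized, so the geodesics of type $(p,q)$, $0\le p<q$ coprime, exhaust all simple closed geodesics.

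I expect the genuine obstacle to be the continuity step $\alpha_0=\pi/3$: one must argue that the position of the segment $X_1X_2$ inside the non-convex polygon $T_1$ varies continuously with $\alpha$ and that ``$X_1X_2$ stays in the interior'' is both an open condition in $\alpha$ and, via the uniform vertex-distance estimate~(\ref{distvertex2}), a closed one on $(0,\pi/3)$, so that the set of good $\alpha$ is all of $(0,\pi/3)$ by connectedness. The geometric subtlety is that as $\alpha$ grows the offending vertices of $T_1$ (those with interior angle $4\alpha>\pi$) push inward, so one has to verify that the line through the fixed center of symmetry clears them, and the quantitative cushion for this is exactly what Corollary~\ref{nesdistcor} supplies.
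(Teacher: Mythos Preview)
Your proposal is correct and follows essentially the same approach as the paper: transfer the Euclidean geodesic's combinatorics to a hyperbolic development $T_1$ (and its symmetric copy $T_2$), use the central symmetry to glue the segments $X_1X_2$ and $X_2X'_1$ into a closed geodesic, handle small $\alpha$ by convexity of $T_1$, and push to all $\alpha<\pi/3$ by the supremum argument backed by the vertex-distance estimate~(\ref{distvertex2}); then invoke Lemma~\ref{Uniqueness} for uniqueness and the equivalence with Euclidean geodesics for exhaustion. The only minor slip is in your closing remark about rigid motions: there are three (not two) geodesics of each type $(p,q)$, related by the $2\pi/3$ rotations of the tetrahedron about a vertex-to-face axis.
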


From Lemma \ref{Uniqueness} it follows the uniqueness of a simple closed geodesic of type $(p,q)$
on a regular tetrahedron in Lobachevsky space.
Such geodesic has $p$ points on  each of two opposite edges of the tetrahedron,
$q$ points on  each of another two opposite edges,
and $(p+q)$ points on   each edges of the third pair of opposite edges.
Hence for any coprime integers $(p, q)$, $0 \le p<q$, there exist three simple closed geodesic of type $(p,q)$
on a regular tetrahedron in Lobachevsky space.
They coincide by the rotation of the tetrahedron
by the angle $\frac{2\pi}{3}$ or $\frac{4\pi}{3}$ about the altitude constructed from a vertex to the opposite face.

Since any simple closed geodesic on a regular tetrahedron in Lobachevsky space is equivalent to 
a simple closed geodesic on a regular tetrahedron in Euclidean space, then there are not another 
simple closed geodesic on a regular tetrahedron in Lobachevsky space.

\begin{figure}[h]
\begin{center}
\begin{minipage}[h]{0.3\linewidth}
\includegraphics[width=50mm]{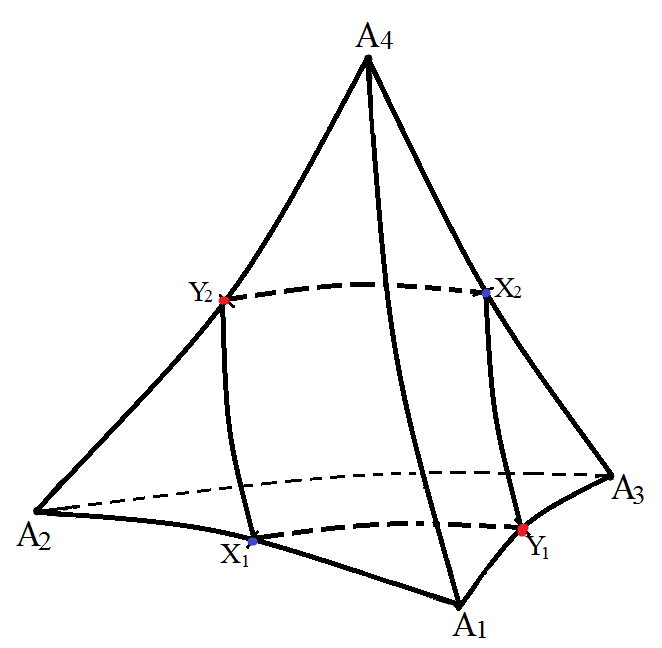} 
\end{minipage}
\hfill 
\begin{minipage}[h]{0.3\linewidth}
\includegraphics[width=50mm]{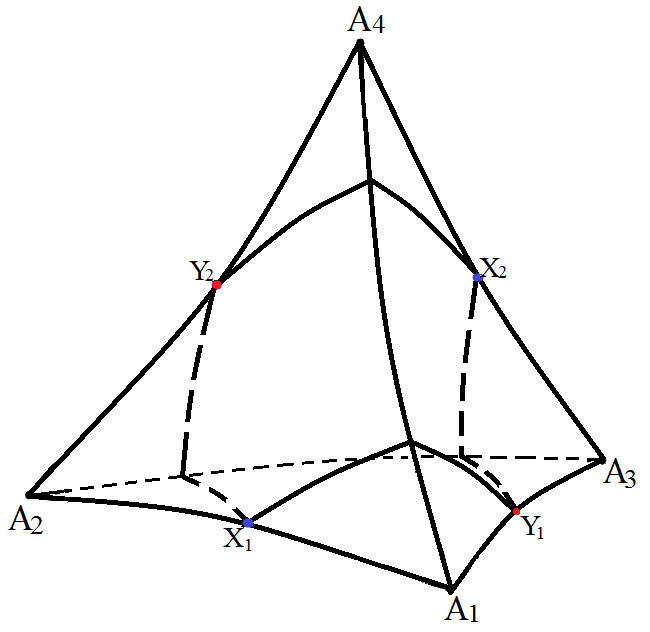}
 \end{minipage}
\hfill 
\begin{minipage}[h]{0.3\linewidth}
\includegraphics[width=50mm]{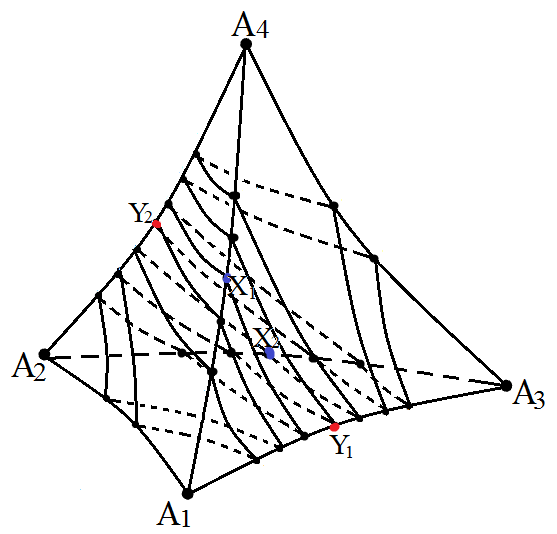}
\end{minipage}
\end{center}
 \caption{A simple closed geodesics of type $(0,1)$, $(1,1)$ and $(5,2)$
  respectively on a regular tetrahedron in Lobachevsky space.  }
\label{geodtetr}
\end{figure}

\section{Proof of Theorem 2}

By vertex of a geodesic we understand a point of this geodesic that belongs to an edge of a tetrahedron.
Consider  vertices  $B_0$, $B^1_1$, $B^2_1$   of the geodesic such that the segments  $B_0 B^1_1$ and $B_0 B^2_1$
are adjacent segments of the geodesic.
The vertex $B_0$     is called a {\it  catching point}    if    $B_0$, $B^1_1$ and $B^2_1$
    lie on the three edges  sharing a common vertex  on the tetrahedron  
  and  $B_0$, $B^1_1$, $B^2_1$ are the nearest vertices of geodesic to this vertex of the tetrahedron.

In \cite{Pro07}  V. Yu. Protasov shows following property of a simple closed geodesic    on a regular tetrahedron 
in Euclidean space.
\begin{lemma}\label{gengeodstr1}\textnormal{\cite{Pro07}}
Let $\gamma^1_1$, $\gamma^2_1$ be the segments of a simple closed geodesic $\gamma$ comming out from a  catching  point
on a regular tetrahedron in Euclidean space.
The segments   $\gamma^2_2$, $\gamma^2_2$   of  $\gamma$ are the segments follow  $\gamma^1_1$, $\gamma^2_1$
respectively and so on.
Then for each $k=2, \dots, 2p+2q-1$  the segments $\gamma^1_k$, $\gamma^2_k$ belong to the same face of the tetahedron
 and   there are no   points of $\gamma$ between $\gamma^1_k$ and $\gamma^2_k$.
The last segments $\gamma^1_{2p+2q}$, $\gamma^2_{2p+2q}$ share the second  catching point.
\end{lemma}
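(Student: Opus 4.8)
The plan is to carry the whole argument on the development of the tetrahedron along $\gamma$. By Section~\ref{construction} this unrolls $\gamma$ onto a straight segment $\ell$ inside the regular tiling of the Euclidean plane by unit equilateral triangles, whose vertices are labelled by $A_1,\dots,A_4$ consistently with the tetrahedron. In this picture a vertex $B_0$ of $\gamma$ is a catching point precisely when $\ell$ crosses three consecutive tiling-edges issuing from one tiling-vertex $V$, with $B_0$ on the middle of these three edges and $B_1^1,B_1^2$ on the two outer ones; the requirement that $B_0,B_1^1,B_1^2$ be the three vertices of $\gamma$ nearest to $V$ says exactly that on each of the three tetrahedron-edges through $V$ this is the crossing closest to $V$. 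So the statement becomes an assertion about the pattern in which $\ell$ cuts the tiling-edges as one moves away from $B_0$ in each of the two directions, and a count of how many cuts occur before the pattern closes up.

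Next I would record two reductions. First, by Lemma~\ref{pq} the geodesic $\gamma$ has $2p+2q+2(p+q)=4(p+q)$ vertices, hence $4(p+q)$ segments, so the vertex reached from $B_0$ after $2p+2q$ segments is the same in both directions; call it $B_0'$. It therefore suffices to prove that the segments strictly between $B_0$ and $B_0'$ behave as claimed and that $B_0'$ is again a catching point. Second, by Corollary~\ref{corparts2} the development of the tetrahedron along $\gamma$ consists of four congruent polygons glued, in succession, by the point reflections in the midpoints of two pairs of opposite edges; the composition $R$ of the two $\pi$-rotations of the tetrahedron about the lines through those two pairs of midpoints is an isometry of the tetrahedron that preserves $\gamma$ and acts on $\gamma$ as the shift by half its length, i.e.\ by $2p+2q$ segments. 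Thus $R(B_0)=B_0'$, $R$ carries the star of $V$ onto the star of another vertex, and $R$ interchanges the two strands $\gamma_1^1,\gamma_2^1,\dots$ and $\gamma_1^2,\gamma_2^2,\dots$ issuing from $B_0$; hence it is enough to follow one strand and compare it at each step with the $R$-image of the other, and once $B_0'$ is shown to be a catching point it is automatically the one reached by both strands at step $2p+2q$.

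The heart of the proof is the middle assertion, by induction on $k$ along the development starting from $B_0$. For the base case $k=2$ one examines the turn of $\gamma$ around $V$: the segments $\gamma_1^1=B_0B_1^1$, $\gamma_1^2=B_0B_1^2$ occupy the two triangles of the fan that $\ell$ cuts off around $V$ — two distinct faces of the tetrahedron, which is why $k=1$ is excluded — and the next segments $\gamma_2^1,\gamma_2^2$ leave this fan through the two outer edges at $B_1^1,B_1^2$ and enter the two triangles flanking it, which by the labelling are one and the same face of the tetrahedron (the third face incident to $V$); the part of that face lying between the two strands is exactly the corner of it adjacent to $V$, and by the ``nearest vertex'' hypothesis that corner meets no other arc of $\gamma$. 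For the inductive step one uses that every tiling-vertex other than the two catching ones is far enough from $\ell$ that only two of its incident edges meet $\ell$; hence $\gamma$ makes a single turn there, the two strands continue through exactly one common face at each step, and nothing of $\gamma$ is inserted between them. The induction stops when both strands re-enter the star of $B_0'$, which by the symmetry $R$ happens for the two of them simultaneously at $k=2p+2q$, the local picture at that moment exhibiting $B_0'$ as the second catching point.

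The genuinely delicate point, which I expect to be the main obstacle, is the inductive step: one must rule out that $\ell$ ever comes close enough to an intermediate tiling-vertex to cut three of its edges, since such a vertex would be a new catching point and would force a piece of $\gamma$ between the two strands. Equivalently, one must show that the catching configuration at $V$ — hence, by $R$, also at $B_0'$ — is extremal, namely that no other tiling-vertex lies within the distance of $\ell$ for which three of its six incident edges get cut. This comes down to a quantitative estimate of the distance from $\ell$ to the tiling-vertices in terms of $p$ and $q$, the Euclidean counterpart of the bound behind Lemma~\ref{nesdist} and of the self-intersection argument in the lemma preceding it. Granted that estimate, what remains is the routine bookkeeping — forced step by step by the development and the central symmetry of its four congruent pieces — of which face each segment $\gamma_k^i$ occupies.
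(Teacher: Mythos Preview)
The paper does not prove this lemma at all: it is stated with the citation \cite{Pro07} and used as a black box, so there is no ``paper's own proof'' to compare your proposal against. Any comparison has to be with Protasov's original argument, not with anything in this text.

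As for the proposal itself, the outline is reasonable but it is not a proof, and you say so yourself. The entire content of the lemma is the assertion you label ``the genuinely delicate point'': that between the two catching vertices the line $\ell$ never comes close enough to any intermediate tiling-vertex to cut three of its incident edges, and that no third strand of $\gamma$ slips in between $\gamma^1_k$ and $\gamma^2_k$. You reduce this to ``a quantitative estimate of the distance from $\ell$ to the tiling-vertices'' and then write ``Granted that estimate, what remains is routine bookkeeping.'' But that estimate \emph{is} the lemma; everything else (the segment count $4(p+q)$, the existence of a second meeting point $B_0'$ after $2p+2q$ steps) is immediate from Lemma~\ref{pq}. Invoking Lemma~\ref{nesdist} does not help here, since that lemma is a hyperbolic statement about the distance from vertices to a geodesic already known to be simple, whereas what you need is a purely Euclidean/combinatorial fact about which tiling-vertices can be ``caught'' by a line of slope determined by $(p,q)$.

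One smaller issue: the isometry $R$ you introduce (the composition of the two $\pi$-rotations) does shift $\gamma$ by half its length and hence sends $B_0$ to $B_0'$, but it does not ``interchange the two strands issuing from $B_0$'' in the way you use it. It sends the strand $(\gamma^1_k)_k$ issuing from $B_0$ to a strand issuing from $B_0'$, not to $(\gamma^2_k)_k$ issuing from $B_0$; so the claimed reduction to following a single strand is not justified by $R$ alone. The symmetry that actually pairs the two strands at $B_0$ is the local reflection of the development in the line through $V$ and $B_0$, and extending that pairing globally again requires the very extremality statement you have left open.
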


Since any simple closed geodesic $\gamma$ on a regular tetrahedron in Lobachevsky space is equivalent to 
a simple closed geodesic on a regular tetrahedron in Euclidean space, then $\gamma$ also satisfy Lemma \ref{gengeodstr1}.

\begin{lemma}\label{condpq}
If the length of a  simple closed geodesic  of type $(p,q)$ on a regular tetrahedron in Lobachevsky space is not greater than $L$, 
then 
\begin{eqnarray}\label{pqcond}
&p+q \le \frac{3}{4} 
\frac {  L -  2 \ln  \left(    \frac{2\pi^3 - \left( \pi - 3\alpha \right)^3  \left(1- \frac{4\alpha^2}{\pi^2}\right) } 
{2\pi^3 - \left( \pi - 3\alpha \right)^3  \left(1+ \frac{4\alpha^2}{\pi^2}\right) } \right)  } 
{    \ln \left(   \frac{2\pi^3 - \left( \pi - 3\alpha \right)^3 \left(1- \frac{\alpha^2}{\pi^2}\right) }
{2\pi^3 - \left( \pi - 3\alpha \right)^3\left(1+ \frac{\alpha^2}{\pi^2}\right) } \right) +
\ln \left(  \frac{3  \pi-3\alpha}{\pi + 3\alpha}   \right)   } 
+2&
 \end{eqnarray}
where $\alpha$ is the plane angle  of a face of the tetrahedron.
\end{lemma}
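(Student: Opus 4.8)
The plan is to turn the length bound $L$ into a bound on $p+q$ by counting the edge-to-edge segments of the geodesic and estimating the lengths of (blocks of) these segments from below by means of the vertex-distance inequality (\ref{distvertex2}).

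I would begin with the combinatorial count implicit in Theorem~1: a simple closed geodesic of type $(p,q)$ meets the six edges in $2p+2q+2(p+q)=4(p+q)$ points, so it decomposes into exactly $4(p+q)$ rectilinear segments, each contained in a face. Relative to the two catching points of $\gamma$ (Lemma~\ref{gengeodstr1}), all but an $O(1)$ number of these segments lie in the ``interior'' of the two strands, and I would set the segments adjacent to the two catching points aside to be treated separately at the end.

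The heart of the argument is a uniform lower bound for the total length of a bounded block of consecutive segments. Any three consecutive segments $\gamma_i,\gamma_{i+1},\gamma_{i+2}$ lie in three pairwise distinct faces — a geodesic never re-enters the face it has just left, as one sees on the Euclidean development of Section~4 — and three distinct faces of a tetrahedron meet in a common vertex $V$ (the vertex missing from the fourth face). Developing these three faces about $V$ straightens $\gamma_i\gamma_{i+1}\gamma_{i+2}$ into a single segment that, by Corollary~\ref{nesdistcor}, stays at distance $>d$ from $V$, while its endpoints lie on two rays from $V$ whose angular spread is controlled by $\alpha$. Applying the hyperbolic trigonometric relations (\ref{ththcos}), (\ref{thshtan}) in the right triangle cut out by the perpendicular from $V$ to this segment — exactly as in the proof of Lemma~\ref{nesdist} — and then replacing the trigonometric functions by the one-sided elementary bounds $\sin y>\tfrac2\pi y$, $\cos y>1-\tfrac2\pi y$ already exploited in Corollary~\ref{nesdistcor}, I would obtain that such a block has length at least $\lambda(\alpha)$, where $\lambda(\alpha)$ is precisely the denominator in (\ref{pqcond}); its two logarithmic summands correspond to the portion of the chord near the foot of the perpendicular and to the portion subtending the angular turn. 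The analogous computation for the two segments issuing from a catching point — which together form a straight segment on the development subtending twice the angle at the corresponding tetrahedron vertex — produces the constant with $4\alpha^2/\pi^2$, i.e.\ the term $2\ln\!\big(\tfrac{2\pi^3-(\pi-3\alpha)^3(1-4\alpha^2/\pi^2)}{2\pi^3-(\pi-3\alpha)^3(1+4\alpha^2/\pi^2)}\big)$ appearing in the numerator.

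To finish, I would run over the $4(p+q)$ cyclic windows of three consecutive segments; since each segment belongs to exactly three of them, the sum of the window-lengths equals $3L$. Bounding every window disjoint from the catching-point neighbourhoods below by $\lambda(\alpha)$, and the remaining $O(1)$ windows below by the catching-point constant, yields $3L\ge 4(p+q)\lambda(\alpha)-c_1\lambda(\alpha)+c_2$ with explicit $c_1,c_2$, and solving for $p+q$ gives (\ref{pqcond}), the ``$+2$'' and the constant in the numerator being fixed by exactly how many windows meet the two catching points. The main obstacle is the uniform trigonometric estimate in the previous step: one must identify the worst configuration of the developed chord relative to $V$ (it is the one symmetric about the perpendicular from $V$, where a chord of fixed angle at fixed distance is shortest), confirm that this perpendicular foot falls inside the relevant part of the development so that the distance $>d$ is actually realized on the chord, and then push the elementary inequalities through to the stated closed form — a delicate but essentially mechanical computation; the bookkeeping near the catching points is routine once Lemma~\ref{gengeodstr1} is in hand.
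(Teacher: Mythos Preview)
Your overall architecture---count the $4(p+q)$ face-segments, group them in blocks of three, bound each block from below, and sum---is exactly the skeleton the paper uses. The gap is in the geometry of the block estimate.

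You assert that when three consecutive faces are developed about their common vertex $V$, the straightened chord has \emph{both} endpoints on rays from $V$. This is false in the generic case and is in fact \emph{forbidden} away from the catching points: by Lemma~3.1 the geodesic cannot meet four consecutive edges all issuing from $V$, so among the four edge-crossings $B_{i-2},B_{i-1},B_i,B_{i+1}$ at least one lies on the edge of its face \emph{opposite} to $V$. Concretely (as in the paper) the pattern is $B_{i-2}\in A_4A_3$, $B_{i-1}\in A_4A_1$, $B_i\in A_4A_2$, $B_{i+1}\in A_2A_3$; the exit point $B_{i+1}$ is not on a ray from $A_4$ at all, so the right-triangle argument ``perpendicular from $V$ plus angular spread'' that you propose does not apply to the whole chord, and your interpretation of the two logarithms in the denominator as ``near the foot'' versus ``angular turn'' is not what produces them.

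What the paper actually does is split each block of three segments as $1+2$. The single segment $B_{i-2}B_{i-1}$ has both endpoints on $A_4$-edges and subtends angle $\alpha$ at $A_4$; the vertex-distance bound (\ref{distvertex}) together with the minimisation over the position of the foot (the symmetric configuration, as you correctly guessed) gives the first logarithm in the denominator of (\ref{pqcond}). The remaining pair $B_{i-1}B_i\cup B_iB_{i+1}$ joins the \emph{opposite} edges $A_4A_1$ and $A_2A_3$; here the lower bound has nothing to do with distance from a vertex---it is the common perpendicular between two opposite edges of the tetrahedron, computed via the midpoint of $A_4A_2$ using (\ref{shshsin}) and (\ref{sha2}), and this is what produces $\ln\!\bigl(\tfrac{3\pi-3\alpha}{\pi+3\alpha}\bigr)$. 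The $4\alpha^{2}/\pi^{2}$ term in the numerator does arise, as you say, from the two catching-point chords, each subtending angle $2\alpha$ at its vertex. Once you replace your single ``chord from $V$'' estimate by this $1+2$ decomposition, the rest of your counting (including the sliding-window bookkeeping, which is a clean alternative to the paper's direct grouping) goes through.
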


\begin{proof}
1. Consider a regular tetrahedron $A_1A_2A_3A_4$ with the plane angle  of a face  equal  to $\alpha$ in Lobachevsky space.
Let $\gamma $ be a simple closed geodesic of type $(p,q)$ on this tetrahedron.
Assume that the vertices $B_1$, $B_2$, $B_3$ of $\gamma $  belong to the edges 
$A_4A_1$, $A_4A_2$, $A_4A_3$ respectively and
these vertices are the nearest to the vertex $A_4$.
Thus the segments $B_2B_1$ and  $B_2B_3$   corespond to the segments $\gamma^1_1$ and $\gamma^2_1$ 
sharing a catching point $B_2$.

Consider the development of the   faces $A_1A_4A_2$ and $A_2A_4A_3$ on hyperbolic plane.
The segments $\gamma^1_1$ and $\gamma^2_1$  correspond to the one segment $B_1B_3$ on this development.
Without loss of generality we assume that the length of $A_4B_1$  is not less than the length of  $A_4B_3$.

Let us check that the  function of the distance  between $A_4$ and  points on $\gamma$
attains its minimum at the point $H$ belonging to the segment  $B_1 B_3$.
Suppose it is not true. 
We will extend the geodesic $\gamma$ from the point $B_3$ on the face $A_1A_4A_3$.
From Corollary \ref{convexdistcor} we obtain that the distance between $A_4$ and  $\gamma$ is decrease.
Therefore,  $\gamma$  intersects the edge $A_1A_4$ on the point belonging to the segment $A_4B_1$.
But the vertex $B_1$ is the nearest  to   $A_4$  vertex of $\gamma$  on the edge  $A_1A_4$. 
We get a contradiction.
Then   the point $ H$
belonging to the  segment $B_1B_3$ and $A_4H$ is perpendicular to $B_1B_3$.

\begin{figure}[h!]
\begin{center}
\includegraphics[width=90mm]{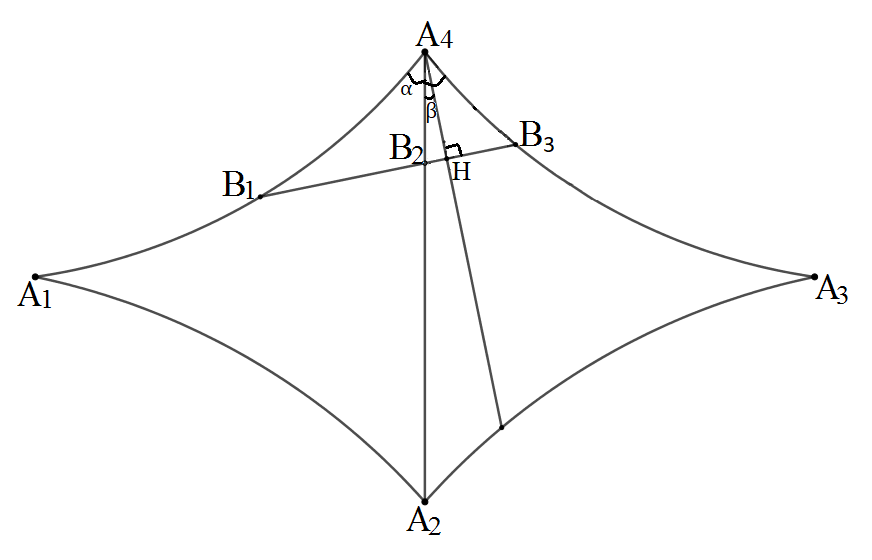}
\caption{ }
\label{abdev}
\end{center}
\end{figure}

Denote the angle $\angle B_2A_4H$ by $\beta$  and $ \beta \in [-\alpha, \alpha]$ (Figure \ref{abdev}).
Consider the length of $B_1B_3$ as a fuction of $\beta$.
From equation (\ref{thshtan}) applied to the   triangles $B_1A_4H$ and $B_3A_4H$ we obtain 
\begin{equation}\label{B1H}
\tanh |B_1H| =  \sinh |A_4H| \tan (\alpha+\beta).
\end{equation}
\begin{equation}\label{B3H}
\tanh |B_3H| = \sinh |A_4H| \tan (\alpha-\beta).
\end{equation}
From equations  (\ref{B1H}) and (\ref{B3H}) it follows that 
\begin{equation}\label{BB2s}
|B_1B_3|=\textnormal{artanh} \big( \sinh |A_4H| \tan (\alpha+\beta) \big) + \textnormal{artanh} \big( \sinh |A_4H| \tan (\alpha-\beta) \big).
\end{equation}
Find the first derivative of the function in the right side of  (\ref{BB2s}),
set the derivative equal to zero and solve for $\beta$.
We obtain that the  length of $B_1B_3$  attains its minimum at $\beta = 0$ and this minimum is  
\begin{equation}\label{BB2s1}
|B_1B_3|_{min}=2\textnormal{artanh} \big( \sinh |A_4H| \tan \alpha \big). 
\end{equation}

From Lemma  \ref{nesdist} we get that the distance between   $A_4$ and  $\gamma$  satisfies follow  inequality 
\begin{equation}
\tanh |A_4H|>\cos\frac{3\alpha}{2}\cos\frac{\alpha}{2} \frac{\sqrt{2\cos\alpha-1}}{\cos\alpha}.  \notag
 \end{equation}

Using $\sinh |A _4H| = \frac{\tanh |A_4H|}{\sqrt{ 1-\tanh^2|A_4H| } }$, we obtain 
\begin{equation}\label{A4H}
\sinh |A_4H| > \frac{  \cos \alpha \cos\frac{3\alpha}{2} \sqrt{  \cos^3 \frac{3\alpha}{2} \cos\frac{\alpha}{2}  }  }
{ \cos^2\alpha - \cos^3 \frac{3\alpha}{2} \cos\frac{\alpha}{2}  }. 
 \end{equation}

Combining (\ref{A4H}) and  (\ref{BB2s1}), we get follow estimation of the length  $B_1B_3$  
\begin{equation}\label{B1B3}
\tanh \frac{|B_1B_3|}{2} >  \frac{  \sin \alpha \cos\frac{3\alpha}{2} \sqrt{  \cos^3 \frac{3\alpha}{2} \cos\frac{\alpha}{2}  }  }
{ \cos^2\alpha - \cos^3 \frac{3\alpha}{2} \cos\frac{\alpha}{2}  }. 
 \end{equation}

The length of the segments  $\gamma^1_{2p+2q}$ and $\gamma^2_{2p+2q}$ comming out from
 the second catching point also satisfies 
inequality (\ref{B1B3}).

2. Consider the development of  the tetrahedron   along the segments $\gamma^1_i$,
$i=2, \dots, 2p+2q-1$ of the geodesic in   hyperbolic space.
These segments   form a line segment $\gamma^1$ inside  the development.
From Lemma \ref{gengeodstr1} we have that 
the segments $\gamma^2_i$, $i=2, \dots, 2p+2q-1$  
intersect the same sequence of the edges.
It follows that   $\gamma^2_i$,  
$i=2, \dots, 2p+2q-1$ form  the segment $\gamma^2$ inside  the development and 
 $\gamma^2$ does not intersect $\gamma^1$.

The development along $\gamma^1$ consist of $(2q+2p-2)$  faces of the tetrahedron.
Consider three  consecutive faces of the development containig the segments $\gamma^1_{i-1}$, $\gamma^1_i$, 
$\gamma^1_{i+1}$, $i=3 \dots 2p+2q-2$.
Since $\gamma$  is a simple closed geodesic,  it follows that $\gamma^1$ does not pass throught four edges 
 sharing a common vertex of  the tetrahedron.
Therefore, the segments $\gamma^1_{i-1}$, $\gamma^1_i$, $\gamma^1_{i+1}$ intersect
 two opposite edges of the tetrahedron.
Assume that $\gamma^1_{i-1}$, $\gamma^1_i$ and $\gamma^1_{i+1}$
  intersect  the edges $A_4A_3$, $A_4A_1$, $A_4A_2$ and $A_2A_3$ at the points 
  $B_{i-2}$, $B_{i-1}$,  $B_i$ and $B_{i+1}$ respectively  (Figure \ref{condminsmall}).

\begin{figure}[h]
\begin{center}
\begin{minipage}[h]{0.4\linewidth}
\includegraphics[width=80mm]{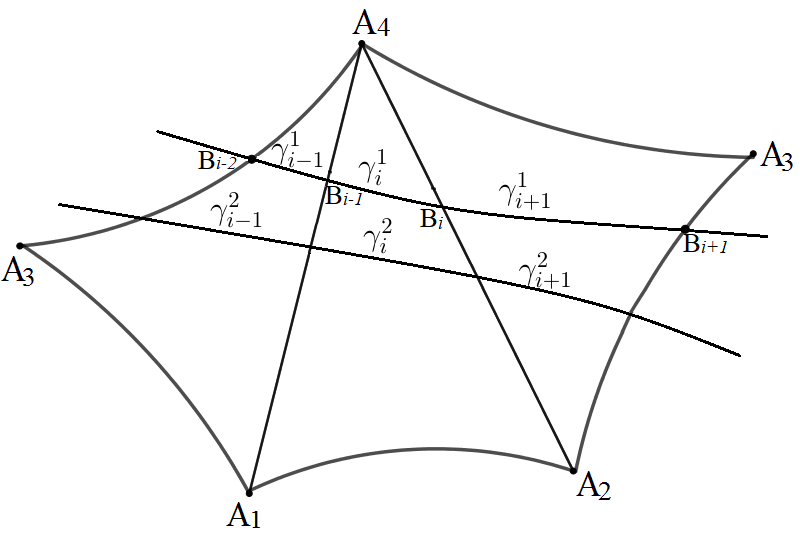}
\caption{ }
\label{condminsmall}
\end{minipage}
\hfill 
\begin{minipage}[h]{0.5\linewidth}
\includegraphics[width=80mm]{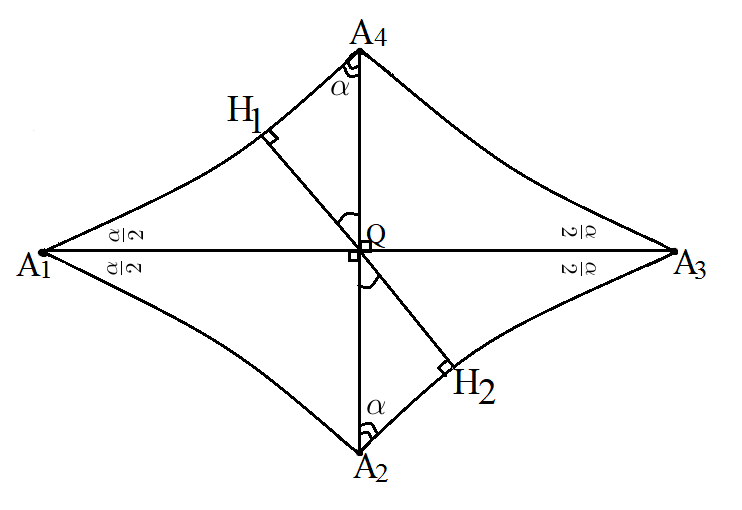}
\caption{}
\label{rhomimax}
\end{minipage}
\end{center}
\end{figure}

Similarly to the estimation of the length  $B_1B_3$,  we obtain following inequality for  the length of $B_{i-2} B_{i-1}$.
 \begin{equation}\label{Bi2Bi1}
\tanh \frac{|B_{i-2}B_{i-1}|}{2} > 
 \frac{ \tan  \frac{\alpha}{2} \cos \alpha \cos\frac{3\alpha}{2} \sqrt{  \cos^3 \frac{3\alpha}{2} \cos\frac{\alpha}{2}  }  }
{ \cos^2\alpha - \cos^3 \frac{3\alpha}{2} \cos\frac{\alpha}{2}  }. 
 \end{equation}

For evaluating the length of the segment $B_{i-1}B_{i+1}$ consider the development of two adjacent faces
$A_4A_2A_1$ and $A_4A_2A_3$ containing this segment.
Denote by  $Q$ the midpoint of the edge $A_2A_4$.
On the edges  $A_4A_1$ and $A_2A_3$ mark the points $H_1$ and $H_2$  respectively so that 
$QH_1$ is perpendicular to $A_4A_1$ and $QH_2$ is perpendicular to $A_2A_3$.
Since $\angle H_1A_4Q = \angle H_2A_2Q = \alpha < \frac{\pi}{2}$ and the sum of the interior angles of the hyperbolic triangle 
is less then $\pi$, it follows that the segments $QH_1$, $QH_2$ 
belong to the triangles $A_1QA_4$, $A_3QA_2$ respectively.

The segment $A_1A_3$  is  perpendicular to $A_4A_2$ at the point  $Q$ (Figure \ref{rhomimax}).
The lengths of $A_1Q$ and $QA_3$ are equal.
It follows that the faces $A_1A_2A_4$, $A_2A_4A_3$ 
 coincide modulo the rotation by the angle $\pi$ about $Q$.
After this rotation the point $H_2$ on $A_2A_3$  is mapped to the point $H_1$ on $A_4A_1$.
Then  $\angle A_4QH_1 = \angle A_2QH_2$.
We obtain that  $H_1QH_2$ is a straight line segment and the
function of a distance  between  $A_1A_4$ and $A_2A_3$
attains its minimum at $H_1 H_2$.
From  equation (\ref{shshsin}) applied to  the triangle $QH_1A_4$ we obtain
\begin{equation}\label{QH11}
\sinh |QH_1| = \sinh |QA_4| \sin \alpha = \sinh \frac{a}{2} \sin \alpha.
\end{equation}
From equation (\ref{sha2}) we get
\begin{equation}\label{shQH1}
\sinh |QH_1| = \frac{\sqrt{2\cos \alpha-1}}{2\sin \frac{\alpha}{2}} \sin \alpha = \cos \frac{\alpha}{2} \sqrt{2\cos \alpha-1}. 
\end{equation}

Let us check that the 
distance from  the set of  the vertices of the tetrahedron to  $H_1H_2$ satisfies 
necessary condition  (\ref{distvertex}).
From  the hyperbolic  Pythagorean theorem  (\ref{Pifagor}) applied to  the triangle $A_4H_1Q$  we get
\begin{equation}
\cosh|A_4H_1|= \frac{\cosh |A_4Q|}{\cosh |QH_1|} = \frac{\cosh \frac{a}{2}}{\sqrt{1+\sinh^2 |QH_1|}}. 
\notag
\end{equation}
Using  (\ref{cha2}) and (\ref{shQH1}), we obtain
\begin{equation}\label{s01}
\cosh|A_4H_1|= \frac{1}{2\sin \frac{\alpha}{2} \sqrt{\cos^2\alpha+ \cos^2 \frac{\alpha}{2} } }.
\end{equation}
Transforming (\ref{s01}), we get
\begin{equation}\label{s0}
\tanh|A_4H_1|= 1 - 4\sin^2 \frac{\alpha}{2} \left( \cos^2\alpha+ \cos^2 \frac{\alpha}{2} \right).
\end{equation}
From $\tanh|A_4H_1| >\cos \frac{3\alpha}{2} \cos \frac{\alpha}{2} \frac{\sqrt{2\cos\alpha-1}}{\cos\alpha}$ it follows that
the necessary condition  (\ref{distvertex}) is satisfied.

Therefore, from (\ref{shQH1}) we get 
\begin{equation}\label{Bi1Bi1}
\sinh\frac{|B_{i-1}B_{i+1}|}{2} \ge \cos \frac{\alpha}{2} \sqrt{2\cos \alpha-1}. \notag
\end{equation}
Transforming the last inequality, we obtain
\begin{equation}\label{Bi1Bi1}
\tanh\frac{|B_{i-1}B_{i+1}|}{2} \ge \frac{\cos \frac{\alpha}{2} \sqrt{2\cos \alpha-1} }
{ \sqrt{\cos^2\alpha+ \cos^2 \frac{\alpha}{2} } }. 
\end{equation}

Since  the length of a  simple closed geodesic $\gamma$  of type $(p,q)$ 
on a regular tetrahedron in Lobachevsky space is not greater than $L$, we get 
\begin{eqnarray}\label{LBB}
&L \ge  2\left(  \frac{2p+2q-4}{3}  \right) 
\Big(  |B_{i-2}B_{i-1}| +|B_{i-1}B_{i+1}|     \Big) + 2 |B_1B_3|. &
 \end{eqnarray}
 Using (\ref{B1B3}), (\ref{Bi2Bi1}), (\ref{Bi1Bi1}), we obtain
 \begin{eqnarray*}
&L \ge  8 \left(  \frac{p+q-2}{3}  \right) 
\left( 
\textnormal{artanh} \left(    
\frac{ \tan \frac{\alpha}{2} \cos \alpha \cos\frac{3\alpha}{2} \sqrt{  \cos^3 \frac{3\alpha}{2} \cos\frac{\alpha}{2}  }  }
{ \cos^2\alpha - \cos^3 \frac{3\alpha}{2} \cos\frac{\alpha}{2}  }   
 \right)  +
\textnormal{artanh} \left(  
\frac{\cos \frac{\alpha}{2} \sqrt{2\cos \alpha-1} }{ \sqrt{\cos^2\alpha+ \cos^2 \frac{\alpha}{2} }     }
 \right)    
\right)  &\\  \notag
&+ 4 \textnormal{artanh} \left(  
\frac{  \sin \alpha \cos\frac{3\alpha}{2} \sqrt{  \cos^3 \frac{3\alpha}{2} \cos\frac{\alpha}{2}  }  }
{ \cos^2\alpha - \cos^3 \frac{3\alpha}{2} \cos\frac{\alpha}{2}  }   
 \right). &  
 \end{eqnarray*}

Combining inequalities  (\ref{siny}), (\ref{cosy}) and   $\cos \frac{\alpha}{2} > \frac{\sqrt{3}}{2}$ for $0<\alpha<\frac{\pi}{3}$
with (\ref{B1B3}), (\ref{Bi2Bi1}) and (\ref{Bi1Bi1}), we obtain 
 \begin{equation}
\tanh \frac{|B_1B_3|}{2} \ge  \frac{2 \alpha}{\pi} 
\sqrt{   \frac{\left( \pi - 3\alpha \right)^3}{2\pi^3 - \left( \pi - 3\alpha \right)^3}   };  \notag
 \end{equation}
 \begin{equation}
\tanh \frac{|B_{i-2}B_{i-1}|}{2}  \ge  \frac{ \alpha}{\pi} 
\sqrt{   \frac{\left( \pi - 3\alpha \right)^3}{2\pi^3 - \left( \pi - 3\alpha \right)^3}   };   \notag
 \end{equation}
 \begin{equation}
\tanh\frac{|B_{i-1}B_{i+1}|}{2} \ge \frac{\sqrt{  \pi-3\alpha} }{4\sqrt{ \pi}}.    \notag
\end{equation}
Transforming these  equations, we obtain
 \begin{equation}\label{B1B3n}
|B_1B_3| \ge \ln
 \left( 
  \frac{2\pi^3 - \left( \pi - 3\alpha \right)^3  \left(1- \frac{4\alpha^2}{\pi^2}\right) } 
{2\pi^3 - \left( \pi - 3\alpha \right)^3  \left(1+ \frac{4\alpha^2}{\pi^2}\right) } 
\right); 
 \end{equation}
 \begin{equation}\label{Bi2Bi1n}
|B_{i-2}B_{i-1}| \ge  \ln \left(   \frac{2\pi^3 - \left( \pi - 3\alpha \right)^3 \left(1- \frac{\alpha^2}{\pi^2}\right) }
{2\pi^3 - \left( \pi - 3\alpha \right)^3\left(1+ \frac{\alpha^2}{\pi^2}\right) } \right); 
 \end{equation}
 \begin{equation}\label{Bi1Bi1n}
|B_{i-1}B_{i+1}| \ge \ln \left(  \frac{3  \pi-3\alpha}{\pi + 3\alpha}   \right).
\end{equation}

Putting  (\ref{B1B3n}), (\ref{Bi2Bi1n}), (\ref{Bi1Bi1n})  in  (\ref{LBB}), we have
\begin{eqnarray}\label{Lvs1}
&L \ge  4 \left(  \frac{p+q-2}{3}  \right) 
\Big(
  \ln \left(   \frac{2\pi^3 - \left( \pi - 3\alpha \right)^3 \left(1- \frac{\alpha^2}{\pi^2}\right) }
{2\pi^3 - \left( \pi - 3\alpha \right)^3\left(1+ \frac{\alpha^2}{\pi^2}\right) } \right) +
\ln \left(  \frac{3  \pi-3\alpha}{\pi + 3\alpha}   \right)
   \Big) 
+ 2 \ln
 \left( 
  \frac{2\pi^3 - \left( \pi - 3\alpha \right)^3  \left(1- \frac{4\alpha^2}{\pi^2}\right) } 
{2\pi^3 - \left( \pi - 3\alpha \right)^3  \left(1+ \frac{4\alpha^2}{\pi^2}\right) } 
\right). &
 \end{eqnarray}

From (\ref{Lvs1}) it follows required estimation (\ref{pqcond}).
This complete the proof.

\end{proof}

Euler's  function $\phi(n)$ is equal to the number of
positive integers not greater than  $n $ and prime to $n \in \mathbb N$.
From \cite[Th. 330.]{Hard} we know
\begin{equation}\label{phi}
 \sum\limits_{n=1}^{x}  \phi(n) = \frac{3}{\pi^2}x^2+O(x\ln x),
\end{equation}
where $O(x\ln x) < C x\ln x $, when $x \rightarrow +\infty$.

Denote by $\psi(x)$ a number of  pair of coprime integers $(p,q)$ such that $p<q$ and $p+q\le x,$ $x\in \mathbb R$.
\begin{lemma}\label{psilem}
The asymptotic behavior of $\psi(x)$ is
\begin{equation}\label{psihatpsi2}
\psi(x) = \frac{3}{2\pi^2}x^2+O(x\ln x),
\end{equation}
where $O(x\ln x) < C x\ln x $ when $x \rightarrow +\infty$.
\end{lemma}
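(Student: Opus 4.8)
The plan is to relate $\psi(x)$ to the summatory Euler function $\Phi(x) = \sum_{n=1}^{x}\phi(n)$, for which the asymptotic \eqref{phi} is already available, and then invoke it directly. The key observation is a counting identity: for each integer $q \ge 2$, the number of integers $p$ with $0 < p < q$ and $\gcd(p,q) = 1$ is exactly $\phi(q)$, and each such pair automatically satisfies $p < q$. So the pairs $(p,q)$ with $p < q$, $\gcd(p,q)=1$ and $p + q \le x$ are counted by grouping according to the value of $q$; one also has to remember to include the exceptional pair $(0,1)$ allowed by the Remark, which contributes a bounded term and does not affect the asymptotics.

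First I would fix $x \in \mathbb{R}$ and write $\psi(x) = 1 + \sum_{q \ge 2}\#\{p : 0 < p < q,\ \gcd(p,q)=1,\ p + q \le x\}$. The constraint $p + q \le x$ together with $p \ge 1$ forces $q \le x - 1$, and together with $p < q$ it is implied whenever $2q \le x$, i.e. $q \le x/2$. Thus the inner count equals $\phi(q)$ for $q \le x/2$, and for $x/2 < q \le x-1$ it equals $\#\{p : 1 \le p \le x - q,\ \gcd(p,q) = 1\}$, a quantity between $0$ and $\phi(q)$. This gives the two-sided estimate
\begin{equation}
\sum_{q=2}^{\lfloor x/2 \rfloor}\phi(q) \;\le\; \psi(x) - 1 \;\le\; \sum_{q=2}^{\lfloor x \rfloor}\phi(q).
\end{equation}
The lower bound is $\Phi(x/2) - 1$ and the upper bound is $\Phi(x) - 1$, so these differ by $\Phi(x) - \Phi(x/2)$, which is $\Theta(x^2)$ — too coarse by itself. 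To get the sharp constant I would instead estimate the "tail" sum over $x/2 < q \le x$ more carefully: replacing the inner count $\#\{1 \le p \le x-q : \gcd(p,q)=1\}$ by its main term $\frac{\phi(q)}{q}(x - q)$ with an error $O(d(q)) = O(q^\varepsilon)$ (or more crudely $O(\sqrt{q})$, which suffices), and then evaluating $\sum_{x/2 < q \le x}\frac{\phi(q)}{q}(x-q)$ by partial summation using $\sum_{q \le t}\frac{\phi(q)}{q} = \frac{6}{\pi^2}t + O(\ln t)$.

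Carrying this out, the clean route is: $\psi(x) = \sum_{q \le x}\frac{\phi(q)}{q}\min(q, x-q) + O\!\big(\sum_{q\le x} d(q)\big) = \sum_{q\le x}\frac{\phi(q)}{q}\min(q,x-q) + O(x\ln x)$. Splitting at $q = x/2$: the range $q \le x/2$ contributes $\sum_{q \le x/2}\phi(q) = \frac{3}{\pi^2}\cdot\frac{x^2}{4} + O(x\ln x)$ by \eqref{phi}, and the range $x/2 < q \le x$ contributes $\sum_{x/2 < q \le x}\frac{\phi(q)}{q}(x-q)$. Abel summation against $A(t) := \sum_{q\le t}\frac{\phi(q)}{q} = \frac{6}{\pi^2}t + O(\ln t)$ turns the second sum into $\int_{x/2}^{x} A(t)\,dt$ plus boundary terms $O(x\ln x)$, and $\int_{x/2}^{x}\frac{6}{\pi^2}t\,dt = \frac{6}{\pi^2}\cdot\frac{1}{2}\big(x^2 - \tfrac{x^2}{4}\big) = \frac{6}{\pi^2}\cdot\frac{3x^2}{8} = \frac{9}{4\pi^2}x^2$. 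Wait — I must be careful that $\int_{x/2}^x (x-t)\,dA(t)$, not $\int A$, is what appears; redoing it, $\sum_{x/2<q\le x}\frac{\phi(q)}{q}(x-q) = \int_{x/2}^{x}(x-t)\,dA(t) = \big[(x-t)A(t)\big]_{x/2}^{x} + \int_{x/2}^{x}A(t)\,dt = -\frac{x}{2}A(x/2) + \int_{x/2}^{x}A(t)\,dt = -\frac{x}{2}\cdot\frac{3x}{\pi^2} + \frac{9}{4\pi^2}x^2 + O(x\ln x) = -\frac{3}{2\pi^2}x^2 + \frac{9}{4\pi^2}x^2 + O(x\ln x) = \frac{3}{4\pi^2}x^2 + O(x\ln x)$. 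Adding the two ranges: $\frac{3}{4\pi^2}x^2 + \frac{3}{4\pi^2}x^2 = \frac{3}{2\pi^2}x^2$, which is \eqref{psihatpsi2}. The main obstacle is bookkeeping the error terms: one needs the estimate $\sum_{q\le x}d(q) = O(x\ln x)$ for the $\gcd$-counting error, and one must confirm that the partial-summation boundary and remainder terms from $A(t) = \frac{6}{\pi^2}t + O(\ln t)$ all stay within $O(x\ln x)$; none of this is deep, but the constant $\frac{3}{2\pi^2}$ only emerges if every piece is tracked with the right coefficient, so the arithmetic must be done with care rather than hand-waved.
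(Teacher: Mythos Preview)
Your argument is correct: grouping by $q$, approximating the restricted totient count $\#\{1\le p\le m:\gcd(p,q)=1\}$ by $\tfrac{\phi(q)}{q}m$ with error $O(d(q))$, and then Abel-summing against $A(t)=\sum_{q\le t}\phi(q)/q=\tfrac{6}{\pi^2}t+O(\ln t)$ does produce the constant $\tfrac{3}{2\pi^2}$ with error $O(x\ln x)$. The bookkeeping you flag is genuine but routine, and the divisor-sum bound $\sum_{q\le x}d(q)=O(x\ln x)$ is standard.

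That said, the paper's route is considerably shorter because it groups by the \emph{sum} $y=p+q$ rather than by $q$. The key observation is that for a fixed $y$, the map $k\mapsto y-k$ is an involution on $\{1\le k\le y-1:\gcd(k,y)=1\}$ with no fixed points (for $y>2$), so the ordered pairs $(p,q)$ with $p<q$, $\gcd(p,q)=1$, $p+q=y$ are exactly half of this set: $\hat\psi(y)=\tfrac12\phi(y)$. Hence $\psi(x)=\tfrac12\sum_{y\le x}\phi(y)=\tfrac12\Phi(x)$, and \eqref{phi} gives the result in one line. Your approach trades this symmetry for analytic machinery (partial summation, the auxiliary estimate for $A(t)$, the divisor bound); it works, and it is the kind of argument that would still succeed if the constraint on $(p,q)$ were less symmetric, but here the paper's bijective trick makes all of that unnecessary.
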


\begin{proof}

Suppose  $\hat \psi(y)$  is equal to the number of  pair of coprime integers $(p,q)$ such that $p<q$ and 
$p+q= y$, $y\in \mathbb N$.
From the definitions we get
\begin{equation}\label{psihatpsi}
\psi(x) = \sum\limits_{y=1}^{x} \hat \psi(y) 
\end{equation}

It is easy to proof that if $(p, q)=1$ and $p+q=y$, then $(p, y)=1$ and $(q, y)=1$.
For instance, if $(p, y)=s$, then $p=p_1s$ and $y=y_1s$. 
It follows that $q=y-p=(y_1-q_1)s$.
It contradicts to $(p, q)=1$.

Consider Euler's  function $\phi(y)$.
It is not hard to prove that if $k<y$ and $k$ is  prime to $y$, then
$y-k$ is   prime to $y$ and  $y-k$ is  prime to $k$.
If $(y-k, y)=s$, then $y-k=r_1s$ and $y=r_2s$. It follows that $k=(r_2-r_1)s$.
It contradicts to $(k, y)=1$.

Therefore, we obtain  that the set of  integers not greater than and prime to $y$ are separated
 into the  pairs of coprime integers  
$(p,q)$ such that $p<q$ and $p+q= y$
 It follows that  $\phi(y)$ is even  and $\hat \psi(y) =\frac{1}{2} \phi(y)$.
From (\ref{psihatpsi}) we have
\begin{equation}
\psi(x) = \frac{1}{2} \sum\limits_{y=1}^{x} \phi(y).\notag
\end{equation}

Using (\ref{phi}), we obtain required formula  (\ref{psihatpsi2}).
\end{proof}

Denote by $N(L, \alpha)$ a number of simple closed geodesics of length not greater than $L$ 
on a regular tetraedron in Lobachevsky space with  the  plane angles of the faces equal to $\alpha$.
For each pair of coprime integers $(p,q)$, $p<q$  corresponds three  simple closed geodesics
 on a regular tetraedron in hyperbolic space.
 From Lemma \ref{condpq} we obtain
\begin{equation}
N(L, \alpha) =3\psi \left(  \frac{3}{4} 
\frac {  L -  2 \ln  \left(    \frac{2\pi^3 - \left( \pi - 3\alpha \right)^3  \left(1- \frac{4\alpha^2}{\pi^2}\right) } 
{2\pi^3 - \left( \pi - 3\alpha \right)^3  \left(1+ \frac{4\alpha^2}{\pi^2}\right) } \right)  } 
{    \ln \left(   \frac{2\pi^3 - \left( \pi - 3\alpha \right)^3 \left(1- \frac{\alpha^2}{\pi^2}\right) }
{2\pi^3 - \left( \pi - 3\alpha \right)^3\left(1+ \frac{\alpha^2}{\pi^2}\right) } \right) +
\ln \left(  \frac{3  \pi-3\alpha}{\pi + 3\alpha}   \right)   } 
+2  \right)  \notag
\end{equation}
Using  (\ref{psihatpsi2}),  we get 
\begin{eqnarray*}
&
 N(L, \alpha) =   \frac{27 L^2}
 {32 \pi^2 
 \left( 
  \ln \left(
     \frac{2\pi^3 - \left( \pi - 3\alpha \right)^3 \left(1- \frac{\alpha^2}{\pi^2}\right) }
	{2\pi^3 - \left( \pi - 3\alpha \right)^3\left(1+ \frac{\alpha^2}{\pi^2}\right) }
       \right) +
\ln \left( 
 \frac{3  \pi-3\alpha}{\pi + 3\alpha}  
 \right) 
 \right) ^2} +O(L\ln L),  \; {\text{при }}  L\rightarrow +\infty.
&
\end{eqnarray*}
Thus we proved follow theorem.
\begin{theorem**}
Let $N(L, \alpha) $ be a number of simple closed geodesics of length  not greater than $L$ 
on a regular tetraedron  with  plane angles of the faces equal to $\alpha$ in Lobachevsky space.
Then there exists the function  $c(\alpha)$ such that 
\begin{equation}\label{NLalpha}
 N(L, \alpha) =
c(\alpha) L^2 +O(L\ln L),
\end{equation}
where $O(L\ln L) \le CL\ln L$ when $L \rightarrow +\infty$,
\begin{equation}
 c(\alpha) =  \frac{27}{32 \pi^2} \frac{1}{ \left(  \ln \left(   \frac{2\pi^3 - \left( \pi - 3\alpha \right)^3 \left(1- \frac{\alpha^2}{\pi^2}\right) }
{2\pi^3 - \left( \pi - 3\alpha \right)^3\left(1+ \frac{\alpha^2}{\pi^2}\right) } \right) +
\ln \left(  \frac{3  \pi-3\alpha}{\pi + 3\alpha}   \right)  \right) ^2}, \notag
\end{equation}
\begin{equation}\label{cpi3}
\lim_{\alpha \rightarrow \frac{\pi}{3}} c(\alpha) = +\infty;
\; \; \; \; 
\lim_{\alpha \rightarrow 0} c(\alpha) = \frac{27}{32  (\ln 3)^2 \pi^2 }.
\end{equation}
\end{theorem**}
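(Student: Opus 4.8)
The plan is to convert the geometric count into a lattice-point count by means of Theorem~1, feed in the length bound of Lemma~\ref{condpq} and the divisor-sum asymptotic of Lemma~\ref{psilem}, and close with elementary estimates for the resulting function $c(\alpha)$. By Theorem~1 each pair of coprime integers $(p,q)$ with $0\le p<q$ is realised, up to a rigid motion, by a single geodesic of type $(p,q)$, and on the tetrahedron this accounts for exactly three distinct geodesics of that type (cyclically permuted by the order-three rotations about an altitude). Writing $\ell_{p,q}(\alpha)$ for the common length of these three geodesics, we get
\[
N(L,\alpha)=3\,\#\bigl\{(p,q):0\le p<q,\ \gcd(p,q)=1,\ \ell_{p,q}(\alpha)\le L\bigr\},
\]
so everything reduces to controlling the growth of $\ell_{p,q}(\alpha)$ in $p+q$.

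Set $D(\alpha)=\ln\!\left(\dfrac{2\pi^3-(\pi-3\alpha)^3(1-\alpha^2/\pi^2)}{2\pi^3-(\pi-3\alpha)^3(1+\alpha^2/\pi^2)}\right)+\ln\!\left(\dfrac{3\pi-3\alpha}{\pi+3\alpha}\right)$, the denominator occurring in \eqref{pqcond}. Lemma~\ref{condpq} is precisely the statement that $\ell_{p,q}(\alpha)\le L$ forces $p+q\le\tfrac34\,L/D(\alpha)+O(1)$, i.e. $\ell_{p,q}(\alpha)\ge\tfrac43 D(\alpha)(p+q)-O(1)$. I would then establish the matching upper bound $\ell_{p,q}(\alpha)\le\tfrac43 D(\alpha)(p+q)+O(\ln(p+q))$: develop the tetrahedron along the geodesic; by the catching-point structure (Lemma~\ref{gengeodstr1}) the development splits into the two non-self-intersecting strands $\gamma^1,\gamma^2$ of $\approx 2(p+q)$ faces each, grouped into triples of consecutive faces exactly as in the proof of Lemma~\ref{condpq}. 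Within each triple the sub-length $|B_{i-1}B_{i+1}|$ (and likewise $|B_{i-2}B_{i-1}|$) is a convex function of the position of the intermediate crossing point by Lemma~\ref{convexdist}, minimised at the foot of the relevant common perpendicular; using that the geodesic must close up and must keep every vertex at a distance inside the admissible window of Lemma~\ref{nesdist}, one argues that for all but $O(\ln(p+q))$ of the triples the crossing point lies within $O(1/(p+q))$ of that foot, so the cumulative excess over the bound used in Lemma~\ref{condpq} is $O(\ln(p+q))$. Combining the two directions and using $p+q=O(L)$, the condition $\ell_{p,q}(\alpha)\le L$ becomes $p+q\le\tfrac34\,L/D(\alpha)+O(\ln L)$, so the set counted above equals $\psi\!\bigl(\tfrac34 L/D(\alpha)\bigr)+O(L\ln L)$ with $\psi$ as in Lemma~\ref{psilem}.

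Plugging $\psi(x)=\tfrac{3}{2\pi^2}x^2+O(x\ln x)$ (Lemma~\ref{psilem}, which in turn rests on the Euler-$\varphi$ summatory formula \eqref{phi}) into $N(L,\alpha)=3\psi\!\bigl(\tfrac34 L/D(\alpha)+O(1)\bigr)$ and expanding the square yields $N(L,\alpha)=c(\alpha)L^2+O(L\ln L)$ with $c(\alpha)=\dfrac{27}{32\pi^2 D(\alpha)^2}$, the explicit form asserted in the theorem. For the qualitative claims it suffices that $D(\alpha)$ is a finite positive real on $(0,\tfrac\pi3)$: the argument of the first logarithm exceeds $1$ because the term multiplied by $1+\alpha^2/\pi^2$ exceeds the term multiplied by $1-\alpha^2/\pi^2$, while both $2\pi^3-(\pi-3\alpha)^3(1\pm\alpha^2/\pi^2)$ stay positive since $(\pi-3\alpha)^3(1+\alpha^2/\pi^2)<\tfrac{10}{9}\pi^3<2\pi^3$; the argument of the second exceeds $1$ because $\tfrac{3\pi-3\alpha}{\pi+3\alpha}>1\iff\alpha<\tfrac\pi3$. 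Hence $c(\alpha)>0$. As $\alpha\to\tfrac\pi3$ we have $\pi-3\alpha\to 0$ and $\tfrac{3\pi-3\alpha}{\pi+3\alpha}\to 1$, so $D(\alpha)\to 0$ and $c(\alpha)\to+\infty$; as $\alpha\to 0$ the first logarithm tends to $\ln 1=0$ and the second to $\ln 3$, so $D(\alpha)\to\ln 3$ and $c(\alpha)\to\dfrac{27}{32\pi^2(\ln 3)^2}$. The one genuinely delicate point is the upper bound on $\ell_{p,q}(\alpha)$ with the \emph{same} leading constant $\tfrac43 D(\alpha)$ as in Lemma~\ref{condpq}, i.e. showing that the per-segment lower bounds gathered in Lemma~\ref{condpq} are, in aggregate, sharp up to $O(\ln(p+q))$ for the unique geodesic of type $(p,q)$; the bijection, the divisor sum, and the manipulations for $c(\alpha)$ are routine.
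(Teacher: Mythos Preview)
Your overall strategy is exactly the paper's: use Theorem~1 to set up the bijection with coprime pairs (three geodesics per type), invoke Lemma~\ref{condpq} to bound $p+q$ in terms of $L$, and then feed the result into Lemma~\ref{psilem}. The paper does precisely this and nothing more: it passes directly from the one-sided Lemma~\ref{condpq} to the displayed equality $N(L,\alpha)=3\psi(\ldots)$ and substitutes~\eqref{psihatpsi2}. Your computation of the limits of $c(\alpha)$ also matches the paper's.

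You are in fact more scrupulous than the paper on one point: you observe that Lemma~\ref{condpq} only yields $\ell_{p,q}(\alpha)\ge\tfrac43 D(\alpha)(p+q)-O(1)$, hence only $N(L,\alpha)\le c(\alpha)L^2+O(L\ln L)$, and that the asserted asymptotic \emph{equality} would require a matching upper bound on $\ell_{p,q}$. The paper never addresses this; it simply writes an equality where the lemma supplies an inequality.

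However, the upper bound you sketch, $\ell_{p,q}(\alpha)\le\tfrac43 D(\alpha)(p+q)+O(\ln(p+q))$ with the \emph{same} constant $D(\alpha)$, cannot hold. Look at how $D(\alpha)$ is manufactured in the proof of Lemma~\ref{condpq}: the per-segment estimates \eqref{B1B3}, \eqref{Bi2Bi1}, \eqref{Bi1Bi1} are already lower bounds on the \emph{minimum} possible segment length (minimised over the free angle $\beta$, respectively over the crossing position), and are then further weakened via the deliberately crude inequalities $\sin y>2y/\pi$, $\cos y>1-2y/\pi$, $\cos(\alpha/2)>\sqrt3/2$ to reach \eqref{B1B3n}--\eqref{Bi1Bi1n}. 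Neither step is reversible, so $\tfrac43 D(\alpha)$ is strictly below the true asymptotic slope of $\ell_{p,q}(\alpha)$ in $p+q$; your heuristic that ``all but $O(\ln(p+q))$ triples have the crossing point within $O(1/(p+q))$ of the foot'' cannot recover equality with a constant that was discarded by design. In other words, the explicit $c(\alpha)$ in the statement is, by the paper's own argument, only an upper-bound coefficient for $N(L,\alpha)$, and your attempt to promote it to a two-sided asymptotic with that particular value does not go through.
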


If  $\alpha$  goes to zero, then the vertices of the tetrahedron tends to infinity.
The limiting surface is non-compact surface with  complete regular Riemannian metric of constant negative curvature.
Note that the  limiting  surface is  homeomorphic to a sphere with four points at infinity.
Then the genus of imiting surface is  equal to zero.
I. Rivin show that  on such surface
the number of simple closed  geodesics of length no greater than $L$
is of order $L^2$.

From the equations (\ref{cpi3}) and (\ref{NLalpha}) we obtain  that if $\alpha$  goes to zero,
then the number  $N(L, \alpha)$  is asymptotic to $L^2$ when $L \rightarrow +\infty$.
Hence,   our results are consistent with the results of I. Rivin.

 {\it We are grateful to V. A. Gorkavyy for valuable discussions. }

  B.Verkin Institute for Low
Temperature Physics and Engineering of the National Academy of Sciences of Ukraine,
Kharkiv, 61103, Ukraine

\textsc {  \textit  {E-mail address: } } aborisenk@gmail.com, suhdaria0109@gmail.com

\end{document}